\documentclass[preprint,10pt]{elsarticle}

\usepackage[T1]{fontenc}
\usepackage{lmodern}
\usepackage{amsmath,amssymb,amsfonts,amsthm,mathtools}
\usepackage{graphicx}
\usepackage{stmaryrd}
\usepackage{xcolor}
\usepackage{aliascnt}
\usepackage{bm}
\usepackage[
  colorlinks=true,
  linkcolor=blue,
  citecolor=blue,
  urlcolor=blue
]{hyperref}
\usepackage[capitalise,noabbrev,nameinlink]{cleveref}
\usepackage{xfrac}
\usepackage{tikz,tools}
\usepackage{pgfplots}
\pgfplotsset{compat=1.18}
\usetikzlibrary{spy,patterns}
\usepackage{subfigure}
\usepackage{pifont}
\usepackage{booktabs}

\definecolor{MidnightBlue}{HTML}{191970}
\definecolor{ForestGreen}{rgb}{0.0, 0.27, 0.13}
\definecolor{carmine}{rgb}{0.59, 0.0, 0.09}
\definecolor{burntorange}{rgb}{0.8, 0.33, 0.0}
\definecolor{BrickRed}{rgb}{0.8, 0.25, 0.33}

\newcommand{\safeincludegraphics}[2][]{%
  \IfFileExists{#2}{\includegraphics[#1]{#2}}{\fbox{\texttt{Missing file: #2}}}%
}

\theoremstyle{plain}
\newtheorem{theorem}{Theorem}[section]

\newaliascnt{lemma}{theorem}
\newtheorem{lemma}[lemma]{Lemma}
\aliascntresetthe{lemma}

\newaliascnt{corollary}{theorem}
\newtheorem{corollary}[corollary]{Corollary}
\aliascntresetthe{corollary}

\newaliascnt{claim}{theorem}

\aliascntresetthe{claim}

\theoremstyle{definition}
\newaliascnt{definition}{theorem}
\newtheorem{definition}[definition]{Definition}
\aliascntresetthe{definition}

\newaliascnt{hypothesis}{theorem}

\aliascntresetthe{hypothesis}

\theoremstyle{remark}
\newaliascnt{remark}{theorem}
\newtheorem{remark}[remark]{Remark}
\aliascntresetthe{remark}

\crefname{theorem}{Theorem}{Theorems}
\Crefname{theorem}{Theorem}{Theorems}
\crefname{lemma}{Lemma}{Lemmas}
\Crefname{lemma}{Lemma}{Lemmas}
\crefname{claim}{Claim}{Claims}
\Crefname{claim}{Claim}{Claims}
\crefname{definition}{Definition}{Definitions}
\Crefname{definition}{Definition}{Definitions}
\crefname{hypothesis}{Hypothesis}{Hypotheses}
\Crefname{hypothesis}{Hypothesis}{Hypotheses}
\crefname{remark}{Remark}{Remarks}
\Crefname{remark}{Remark}{Remarks}

\begin{document}

\begin{frontmatter}

\title{A Unified Transmissibility-Based Interior Penalty DG Method for Heterogeneous and Anisotropic Diffusion}

\author[ovpf]{Gregory Etangsale}
\ead{etangsale@ipgp.fr}

\author[piment]{Vincent Fontaine\corref{cor1}}
\cortext[cor1]{Corresponding and Principal author}
\ead{vincent.fontaine@univ-reunion.fr}

\author[ites]{Anis Younes}
\ead{younes@unistra.fr}

\affiliation[ovpf]{organization={Observatoire volcanologique du Piton de la Fournaise, F-97418}, addressline={La R\'eunion}, country={France}}
\affiliation[piment]{organization={PIMENT, University of La R\'eunion, ESIROI}, addressline={La R\'eunion}, country={France}}
\affiliation[ites]{organization={ITES, University of Strasbourg, CNRS}, addressline={Strasbourg}, country={France}}

\begin{abstract}
We derive a primal discontinuous Galerkin (DG) formulation for heterogeneous and anisotropic diffusion, obtained by exact algebraic elimination of the skeletal unknown in a compact hybridized interior penalty (H-IP) method. The resulting Unified Interior Penalty DG (UIP-DG) scheme involves transmissibility-based weights inherited from the hybrid formulation, together with two stabilization terms acting respectively on the primal jump and on the jump of the normal diffusive flux. These penalties scale, respectively, with the harmonic mean and with the inverse arithmetic mean of the face-wise transmissibilities. This construction provides a unified perspective on several interior penalty approaches previously introduced independently, while yielding a robust method with stability properties independent of the diffusion contrast and anisotropy. We prove consistency, coercivity, and boundedness of the formulation, and derive quasi-optimal energy-norm \emph{a priori} error estimates for all variants. Numerical experiments confirm the theoretical claims.
\end{abstract}

\begin{keyword}
Hybridized interior penalty (H-IP) \sep
Unified interior penalty DG (UIP-DG) \sep
DG equivalence and reduction \sep
Heterogeneous and anisotropic diffusion \sep
Transmissibility-based weighted averages \sep
Flux-jump stabilization \sep
High-contrast coefficients \sep
A priori error estimates
\\
\textit{MSC}: 65N30 \sep 65N12 \sep 65N15
\end{keyword}

\end{frontmatter}

\section{Introduction}

Robust discretization of diffusion problems with strongly heterogeneous and anisotropic coefficients remains a central challenge in computational science. In such settings, permeability contrasts may span several orders of magnitude across element interfaces, so that the accurate scaling of interface terms --- numerical fluxes, weighted averages, and penalty parameters --- is critical for stability and robustness. Interior penalty discontinuous Galerkin (IPDG) methods are natural candidates due to their flexibility and local conservation properties \cite{Riviere08bookDG,di2011mathematical,ABCMUnified}. However, standard IPDG formulations based on arithmetic averages and single-valued penalties may suffer from loss of robustness and poor conditioning in high-contrast regimes~\cite{DryjaCMAME2003,ESZIMA08}. Flux-based penalty techniques originate from the classical interior penalty constructions of~\cite{Douglas1976InteriorPP}.

Hybridized DG (HDG) methods provide an alternative framework that reduces the number of globally coupled degrees of freedom via static condensation, while delivering accurate traces on the mesh skeleton suitable for post-processing and superconvergence~\cite{CGLSIAM09,LehrenfeldTH,Cockburn2016,Nguyen2009}. Within this family, compact hybridized interior penalty (H-IP) schemes constitute a particularly effective class for diffusion: element unknowns are eliminated locally, leading to a global system posed solely on the mesh skeleton, and local flux conservation is enforced by construction. Their stabilization naturally involves side-dependent (double-valued) parameters that scale with face-wise transmissibilities, which is precisely the scaling required for robustness in high-contrast and anisotropic regimes.

In heterogeneous and anisotropic diffusion, the fundamental distinction between conventional IPDG schemes and compact H-IP methods lies in the structure of their interface stabilization. Standard IPDG formulations rely on single-valued, \emph{ad hoc} face parameters and associated consistency weights, typically constructed from arithmetic or harmonic averages of the neighboring diffusion tensors. In contrast, H-IP methods naturally produce side-dependent (double-valued) stabilization operators through the local flux-conservation condition, with both weights and penalty scalings uniquely determined by face-wise transmissibilities, \ie by the natural scaling of the normal diffusive flux across the interface.

Existing works on DG reductions of H-IP methods~\cite{wells2011analysis,FabienNM2019} impose a restrictive single-valued assumption on the stabilization parameter. Under this constraint, the transmissibility-based weighting inherent to the hybrid formulation is lost, and the resulting DG formulation reduces to an IP-type scheme with arithmetic-mean consistency weights, a standard primal jump penalty, and an additional flux-jump stabilization term. Although the flux-jump term provides additional interface control, arithmetic averaging fails to capture the correct diffusion scaling across interfaces, leading to a coercivity constant that deteriorates as the diffusion contrast increases \cite{DryjaCMAME2003,BurmanSIAM2006,EtangsaleCAMWA2022}. This reflects the mismatch between arithmetic averaging and the transmissibility-based scaling of the continuous flux. Allowing genuinely double-valued, transmissibility-based weights is therefore essential for contrast-robustness. To the best of our knowledge, an exact primal DG reduction retaining this double-valued transmissibility structure, with explicit closed-form parameters, is not available in the literature.

\paragraph{Contributions}
This work makes three main contributions.

\begin{itemize}

\item \textbf{A unified primal DG formulation.}
We start from a compact H-IP scheme with genuinely double-valued stabilization and eliminate the skeletal unknown in closed form. The resulting UIP-DG method is therefore not designed \emph{ad hoc}: its interface structure --- including transmissibility-based weights and both primal- and flux-jump penalizations --- follows directly from the underlying hybridized formulation.

\item \textbf{Recovery of intrinsic numerical traces.}
A distinctive feature of UIP-DG is that, although formulated purely in terms of the primal unknown, it still admits the \emph{a posteriori} reconstruction of a unique pair of numerical traces $(\dtu,\dtsg)$ associated with the eliminated hybrid variable. In particular, the numerical flux is continuous across interior interfaces for all $\epsilon\in\{0,\pm1\}$, which ensures local conservation and avoids any \textit{ad hoc} flux reconstruction.

\item \textbf{Contrast-robust analysis and optimal approximation properties.}
We establish coercivity and boundedness in an energy norm tailored to heterogeneous and anisotropic diffusion, with constants robust with respect to coefficient jumps. This yields quasi-optimal error estimates in the energy norm for all variants, and, in the symmetric case, an optimal $\LL{2}$-error estimate by duality.

\end{itemize}

The material is organized as follows. \cref{S:2} introduces the mesh notation, broken polynomial spaces, and the discrete trace operators --- in particular, the key identity relation (\cref{lemma:identity_rel}) that underlies the entire DG reduction. \cref{S:3} presents the compact H-IP formulation, carries out the
elimination of the skeletal trace unknown, and states the resulting UIP-DG method together with its connections to existing DG schemes. \cref{S:4} provides the full stability and \emph{a priori} error analysis. Numerical results are reported and discussed in \cref{S:5}.

\section{Preliminaries}
\label{S:2}

\subsection{Model problem}

Let $\domain\subset\R^d$ ($d\in\{2,3\}$) be a bounded polyhedral domain with Lipschitz boundary $\bnddom$. We consider the heterogeneous, anisotropic diffusion problem with homogeneous Dirichlet boundary conditions:
\begin{equation}
  \label{model_problem_strong}
  -\dvg{(\bkap\grd{u})}=f \quad \text{in }\domain,
  \qquad u=0 \quad \text{on }\bnddom,
\end{equation}
where $f\in\lebesgue{2}{\domain}$ is a source term and $\bkap:\domain\to\R^{d\times d}$ is a symmetric diffusion tensor, possibly discontinuous across material interfaces. We assume $\bkap\in[\lebesgue{\infty}{\domain}]^{d\times d}$ and that there exist constants $0<\kappa_{\min}\le\kappa_{\max}<\infty$ such that
\begin{equation}
  \label{ellipticity}
  \kappa_{\min}|\xi|^2
  \;\le\;
  \xi^\top \bkap(x)\,\xi
  \;\le\;
  \kappa_{\max}|\xi|^2,
  \qquad
  \forall\,\xi\in\R^d,
  \quad
  \text{for a.e. }x\in\domain.
\end{equation}
The ratio $\kappa_{\max}/\kappa_{\min}$, referred to as the \emph{contrast ratio}, may be arbitrarily large; controlling this regime is a primary motivation for the UIP-DG method developed in \cref{S:3}. By the Lax--Milgram lemma, problem~\eqref{model_problem_strong} admits a unique weak solution $u\in V\eqbydef\hilbert{1}{\domain}{0}$ satisfying
\begin{equation}
  \label{weak_model_problem}
  \inerprod{\bkap\grd{u}}{\grd{v}}{0}{\domain}
  =\inerprod{f}{v}{0}{\domain},
  \qquad \forall\,v\in V,
\end{equation}
where $\inerprod{\cdot}{\cdot}{0}{\domain}$ denotes the $\lebesgue{2}{\domain}$ inner product. 


\subsection{Discrete setting}

\paragraph{Mesh and skeleton}
Let $\{\Th\}_{h>0}$ be a shape-regular family of conforming affine simplicial meshes of $\domain$, with global meshsize $h\eqbydef\max_{\e{}\in\Th}h_{\e{}}$ and local diameter $h_{\e{}}\eqbydef\diam{\e{}}$. We assume $h\le 1$ without loss of generality. For $d=2$ (resp.\ $d=3$), an \emph{interface} is an edge (resp.\ a face). The mesh skeleton is partitioned as $\Fh\eqbydef\Fhi\cup\Fhb$, where $\Fhi$ and $\Fhb$ collect the interior and boundary interfaces, respectively. For each element $\e{}\in\Th$, we denote by $\FE$ the set of its interfaces, by $\vect{n}_{\e{},\f{}}$ the unit outward normal to $\f{}\in\FE$, and set
\[
  \bnd{\Th}
  \eqbydef
  \bigcup_{\e{}\in\Th}\FE,
  \qquad
  \eta_{\e{}}\eqbydef\card{\FE},
  \qquad
  \eta_0\eqbydef\max_{\e{}\in\Th}\eta_{\e{}}.
\]
Note that $\bnd{\Th}$ is a multiset (each interior interface appears twice, once from each side), whereas $\Fh$ collects each interface once; this distinction is reflected in the inner products introduced below. For a measurable set $X$, $|X|$ denotes its $d$- or $(d-1)$-dimensional Lebesgue measure. 

\paragraph{Functional spaces}
For a polyhedral domain $D\subset\R^d$, we use the standard Sobolev spaces $\hilbert{s}{D}{}$ with norm $\norm{\cdot}{s}{D}$ and seminorm $\seminorm{\cdot}{s}{D}$. For $s=0$, $\hilbert{0}{D}{}=\lebesgue{2}{D}$ with inner product $\inerprod{\cdot}{\cdot}{0}{D}$; on boundaries we write $\dualprod{\cdot}{\cdot}{0}{\bnd{D}}$ for the $\lebesgue{2}{\bnd{D}}$ inner product. For a mesh $\Th$, the \emph{broken Sobolev space} is
\[
  \hilbert{s}{\Th}{}
  \eqbydef
  \bigl\{v\in\lebesgue{2}{\domain}:
  v|_{\e{}}\in\hilbert{s}{\e{}}{}\ \forall\e{}\in\Th\bigr\},
\]
equipped with the broken norm $\norm{\cdot}{s}{\Th}$ and the broken gradient $\grdh{}$. To handle interface integrals uniformly, we introduce the aggregated $\LL{2}$-inner products:
\begin{equation*}
  \dualprod{\cdot}{\cdot}{0}{\bnd{\Th}}
  \eqbydef
  \sumT\sumFe\dualprod{\cdot}{\cdot}{0}{\f{}},
  \qquad
  \dualprod{\cdot}{\cdot}{0}{\Fh}
  \eqbydef
  \sumF\dualprod{\cdot}{\cdot}{0}{\f{}},
\end{equation*}
with corresponding norms $\norm{\cdot}{0}{\bnd{\Th}}$ and $\norm{\cdot}{0}{\Fh}$. 

\paragraph{Polynomial spaces and composite HDG space}
Let $k\ge 1$. The broken polynomial spaces are
\begin{subequations}
\begin{align}
  \Pk{k}{\Th}
  &\eqbydef
  \bigl\{\dv\in\lebesgue{2}{\domain}:
  \dv|_{\e{}}\in\Pk{k}{\e{}},\ \forall\e{}\in\Th\bigr\},\\
  \Pk{k}{\Fh}
  &\eqbydef
  \bigl\{\dtv\in\lebesgue{2}{\Fh}:
  \dtv|_{\f{}}\in\Pk{k}{\f{}},\ \forall\f{}\in\Fh\bigr\},
\end{align}
\end{subequations}
where $\Pk{k}{X}$ denotes the space of polynomials of \emph{total degree at most $k$} on $X$. We set
\[
  \Vh\eqbydef\Pk{k}{\Th},
  \qquad
  \TVh{}\eqbydef
  \bigl\{\tv\in\Pk{k}{\Fh}:
  \tv|_{\f{}}=0,\ \forall\f{}\in\Fhb\bigr\},
\]
where the homogeneous condition on $\Fhb$ encodes the Dirichlet boundary condition at the discrete skeletal level. The \emph{composite HDG space} is then
\[
  \CVh{}\eqbydef\Vh\times\TVh{},
\]
whose elements $\cdv\eqbydef(\dv,\dtv)\in\CVh{}$ pair a broken polynomial $\dv$ with a single-valued trace $\dtv$ on the skeleton. 
We further assume that the diffusion tensor $\bkap$ is piecewise constant with respect to the mesh, \ie $\bkap \in \left[\Pk{0}{\Th}\right]^{d\times d}$. 

\subsection{Discrete trace operators}
\label{S:2.3}

To bridge the H-IP and primal DG formulations, we introduce three trace operators: the standard DG jump and weighted mean, acting on broken functions over $\Fh$, and the local HDG-jump, acting elementwise over $\bnd{\Th}$. A central role is played by a \emph{skeletal-weighting function} $\vomega:\Fh\to\R^2$, which assigns a pair of non-negative weights to each interface. Specifically, for each interior interface $\f{}\in\Fhi$ shared by elements $\e{1}$ and $\e{2}$ (\ie
$\f{}=\bnd{\e{1}}\cap\bnd{\e{2}}$), we set
\begin{equation}
  \label{weight_function}
  \vomega|_{\f{}}
  \eqbydef(\omega_1,\omega_2),
  \qquad
  \omega_i\eqbydef\omega|_{\e{i},\f{}},
  \quad i=1,2,
\end{equation}
with $\omega_1+\omega_2=1$ a.e.\ on $\f{}$. On boundary faces $\f{}\in\Fhb$, we set $(\omega_1,\omega_2)\eqbydef(1,0)$. 

\begin{definition}[Discrete trace operators]
\label{def_traces_operators}
Let $s\ge 1$ and $\boldsymbol{\varphi}\eqbydef(\varphi,\hat\varphi)
\in\hilbert{s}{\Th}{}\times\lebesgue{2}{\Fh}$. For $\f{}\in\Fhi$ shared by $\e{1}$ and $\e{2}$, let $\varphi_i\eqbydef\varphi|_{\e{i},\f{}}$ and $\vect{n}_i\eqbydef\vect{n}_{\e{i},\f{}}$ denote the traces of $\varphi$ and the outward unit normal to $\f{}$ from $\e{i}$, respectively.
\begin{itemize}
  \item \textbf{DG jump.} The normal jump operator
  $\jump{\cdot}:\hilbert{s}{\Th}{}\to[\lebesgue{2}{\Fh}]^d$ is defined by
  \begin{equation}
    \label{sub:wjump}
    \jump{\varphi}|_{\f{}}
    \eqbydef
    \begin{cases}
      \varphi_1\vect{n}_1+\varphi_2\vect{n}_2,
      & \f{}\in\Fhi,\\[4pt]
      \varphi|_{\e{},\f{}}\,\vect{n}_{\e{},\f{}},
      & \f{}\in\Fhb.
    \end{cases}
  \end{equation}
  \item \textbf{Weighted DG mean.} Given a skeletal-weighting function $\vomega$, the weighted-mean operator
  $\wmean{\cdot}{\vomega}:\hilbert{s}{\Th}{}\to\lebesgue{2}{\Fh}$ is
  \begin{equation}
    \label{sub:wmean}
    \wmean{\varphi}{\vomega}|_{\f{}}
    \eqbydef
    \begin{cases}
      \omega_1\varphi_1+\omega_2\varphi_2,
      & \f{}\in\Fhi,\\[4pt]
      \varphi|_{\e{},\f{}},
      & \f{}\in\Fhb.
    \end{cases}
  \end{equation}
  The \emph{conjugate} weighted mean $\cwmean{\cdot}{\vomega}$ is obtained by swapping the weights: $\cwmean{\varphi}{\vomega}|_{\f{}}\eqbydef\omega_2\varphi_1+\omega_1\varphi_2$ on $\f{}\in\Fhi$, and coincides with $\wmean{\cdot}{\vomega}$ on $\Fhb$. Setting $\omega_1=\omega_2=\shalf$ recovers the arithmetic mean $\mean{\varphi}\eqbydef\shalf(\varphi_1+\varphi_2)$ in both cases.
  \item \textbf{HDG local jump.} The HDG-jump operator   $\tjump{\cdot}:\hilbert{s}{\Th}{}\times\lebesgue{2}{\Fh}  \to[\lebesgue{2}{\bnd{\Th}}]^d$ measures the local discrepancy between the broken function and its skeletal counterpart, and is defined elementwise by
  \begin{equation}
    \label{sub:hdgjump}
    \tjump{\boldsymbol{\varphi}}|_{\e{},\f{}}
    \eqbydef
    \bigl(\varphi|_{\e{},\f{}}-\hat\varphi|_{\f{}}\bigr)\vect{n}_{\e{},\f{}},
    \qquad
    \forall\e{}\in\Th,\ \forall\f{}\in\FE.
  \end{equation}
  Unlike $\jump{\cdot}$, which is single-valued on $\Fh$, the HDG jump  $\tjump{\cdot}$ is double-valued on interior interfaces.
\end{itemize}
The DG operators $\jump{\cdot}$, $\wmean{\cdot}{\vomega}$ and $\cwmean{\cdot}{\vomega}$ extend componentwise to vector fields.
\end{definition}
The following identity is the key algebraic ingredient underlying the proposed DG reduction. It provides an exact rewriting of boundary integrals over $\bnd{\Th}$ as interface integrals over $\Fh$ in terms of DG trace operators, for arbitrary weights $\vomega$.
\begin{lemma}[Identity relation]
\label{lemma:identity_rel}
Let $\vect{b}\in[\hilbert{1}{\Th}{}]^d$ and $\boldsymbol{\varphi}=(\varphi,
\hat\varphi)\in\hilbert{1}{\Th}{}\times\lebesgue{2}{\Fh}$ with $\hat\varphi=0$ on
$\Fhb$. Then, for any weighting function $\vomega$,
\begin{equation}
  \label{magic3}
  \dualprod{\vect{b}}{\tjump{\boldsymbol{\varphi}}}{0}{\bnd{\Th}}
  =
  \dualprod{\cwmean{\vect{b}}{\vomega}}{\jump{\varphi}}{0}{\Fh}
  +\dualprod{\jump{\vect{b}}}{\wmean{\varphi}{\vomega}-\hat\varphi}{0}{\Fhi}.
\end{equation}
\end{lemma}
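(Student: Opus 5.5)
Since both sides are sums of integrals over individual interfaces, I will prove \eqref{magic3} interface by interface. The first step regroups the left-hand side, a sum over the multiset $\bnd{\Th}$, by interface: every $\f{}\in\Fhb$ appears exactly once, and every $\f{}\in\Fhi$ appears twice, once from each of its neighbours $\e{1}$ and $\e{2}$. With the notation $\vect{b}_i\eqbydef\vect{b}|_{\e{i},\f{}}$, $\varphi_i$ and $\vect{n}_i$ of \cref{def_traces_operators}, \eqref{sub:hdgjump} gives
\[
  \dualprod{\vect{b}}{\tjump{\boldsymbol{\varphi}}}{0}{\bnd{\Th}}
  =\sum_{\f{}\in\Fhb}\dualprod{\vect{b}\cdot\vect{n}}{\varphi-\hat\varphi}{0}{\f{}}
  +\sum_{\f{}\in\Fhi}\dualprod{1}{\vect{b}_1\cdot\vect{n}_1(\varphi_1-\hat\varphi)+\vect{b}_2\cdot\vect{n}_2(\varphi_2-\hat\varphi)}{0}{\f{}}.
\]
This requires only that the traces of $\vect{b}$ and $\varphi$ on each face belong to $\LL{2}$. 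That holds by the trace theorem for $\hilbert{1}{\e{}}{}$, and it makes every integral well defined.

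The boundary faces are handled directly. There $\hat\varphi=0$, and $\omega=(1,0)$ makes both weighted means reduce to the one-sided trace. Hence the boundary integrand $\vect{b}\cdot\vect{n}\,\varphi$ coincides with $\cwmean{\vect{b}}{\vomega}\cdot\jump{\varphi}$. The second term on the right of \eqref{magic3} runs over $\Fhi$ only, so it contributes nothing on $\Fhb$, and the boundary parts of the two sides agree.

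The main step is the pointwise algebraic identity on an interior face, and I expect it to be the only real obstacle. On such a face I use $\vect{n}_2=-\vect{n}_1$ and write $\vect{n}\eqbydef\vect{n}_1$, so that $\jump{\varphi}=(\varphi_1-\varphi_2)\vect{n}$ and $\jump{\vect{b}}=(\vect{b}_1-\vect{b}_2)\cdot\vect{n}$. Here $\jump{\vect{b}}$ must be read as the scalar normal jump $\vect{b}_1\cdot\vect{n}_1+\vect{b}_2\cdot\vect{n}_2$, since this is what makes the pairing with the scalar $\wmean{\varphi}{\vomega}-\hat\varphi$ meaningful. Setting $\beta_i\eqbydef\vect{b}_i\cdot\vect{n}$, the left-hand integrand becomes $\beta_1(\varphi_1-\hat\varphi)-\beta_2(\varphi_2-\hat\varphi)$. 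The right-hand integrand becomes
\[
  (\omega_2\beta_1+\omega_1\beta_2)(\varphi_1-\varphi_2)
  +(\beta_1-\beta_2)(\omega_1\varphi_1+\omega_2\varphi_2-\hat\varphi).
\]
I will expand this expression and use $\omega_1+\omega_2=1$. The coefficient of $\beta_1\varphi_1$ is $\omega_2+\omega_1=1$. The coefficient of $\beta_1\varphi_2$ is $-\omega_2+\omega_2=0$. The coefficient of $\beta_2\varphi_1$ is $\omega_1-\omega_1=0$. The coefficient of $\beta_2\varphi_2$ is $-\omega_1-\omega_2=-1$. The terms in $\hat\varphi$ reduce to $-(\beta_1-\beta_2)\hat\varphi$. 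Together these give exactly $\beta_1\varphi_1-\beta_2\varphi_2-(\beta_1-\beta_2)\hat\varphi$, which is the left-hand integrand. Integrating over each face and summing over $\Fh$ then yields \eqref{magic3}.

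The one point needing care is that the conjugate weights $(\omega_2,\omega_1)$ on $\vect{b}$ are exactly what makes the cross terms cancel. The identity holds pointwise, so it remains valid when the weights vary along $\f{}$.
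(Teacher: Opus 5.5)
Your proof is correct and takes essentially the same route as the paper. Both regroup the multiset sum over $\bnd{\Th}$ face by face and treat $\Fhb$ directly. On $\Fhi$ both rely on the product rule $\jump{\vect{b}\bar\varphi}=\cwmean{\vect{b}}{\vomega}\jump{\bar\varphi}+\jump{\vect{b}}\wmean{\bar\varphi}{\vomega}$ with $\bar\varphi=\varphi-\hat\varphi$, which the paper simply invokes and you verify explicitly by expanding coefficients; your reading of $\jump{\vect{b}}$ as the scalar normal jump is the one the paper's argument implicitly uses.
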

\begin{proof}
Set $\bar\varphi\eqbydef\varphi-\hat\varphi$, so that $\tjump{\boldsymbol{\varphi}}|_{\e{},\f{}}=\bar\varphi|_{\e{},\f{}}\,\vect{n}_{\e{},\f{}}$. By definition of $\dualprod{\cdot}{\cdot}{0}{\bnd{\Th}}$,
\[
  \dualprod{\vect{b}}{\tjump{\boldsymbol{\varphi}}}{0}{\bnd{\Th}}
  =\sumT\sumFe\int_{\f{}}\vect{b}_{\e{},\f{}}\cdot\vect{n}_{\e{},\f{}}\,
  \bar\varphi_{\e{},\f{}}\,\mathrm{d}s
  =\sumF\int_{\f{}}\jump{\vect{b}\bar\varphi}|_{\f{}}\,\mathrm{d}s.
\]
We now treat $\Fhi$ and $\Fhb$ separately. On $\Fhi$, we apply the \emph{product rule}
\begin{equation}
  \label{product_rule}
  \jump{\vect{b}\,\bar\varphi}|_{\f{}}
  =\cwmean{\vect{b}}{\vomega}\jump{\bar\varphi}
  +\jump{\vect{b}}\wmean{\bar\varphi}{\vomega},
\end{equation}
which holds for any skeletal-weighting function $\vomega$. Since $\hat\varphi$ is single-valued on $\Fhi$, we have $\jump{\bar\varphi}=\jump{\varphi}$ and $\wmean{\bar\varphi}{\vomega}=\wmean{\varphi}{\vomega}-\hat\varphi$, so
\[
  \sumFi\int_{\f{}}\jump{\vect{b}\bar\varphi}|_{\f{}}\,\mathrm{d}s
  =\dualprod{\cwmean{\vect{b}}{\vomega}}{\jump{\varphi}}{0}{\Fhi}
  +\dualprod{\jump{\vect{b}}}{\wmean{\varphi}{\vomega}-\hat\varphi}{0}{\Fhi}.
\]
On $\Fhb$, there is a single adjacent element and $\hat\varphi=0$, so
$\bar\varphi=\varphi$ and the jump reduces to a one-sided trace:
\[
  \sumFb\int_{\f{}}\jump{\vect{b}\bar\varphi}|_{\f{}}\,\mathrm{d}s
  =\dualprod{\cwmean{\vect{b}}{\vomega}}{\jump{\varphi}}{0}{\Fhb}.
\]
Combining these above contributions yields~\eqref{magic3}.
\end{proof}

\section{Interior penalty HDG method and its DG-reduction}
\label{S:3}

We first rewrite the compact H-IP formulation in terms of numerical fluxes. Next, we eliminate the trace unknown $\dtu$ from all face contributions, thereby obtaining a purely primal DG formulation posed on $\Vh$.

\subsection{Compact H-IP formulation}

\begin{lemma}[Flux reformulation]
\label{lemma-flux-reform}
The compact H-IP discretization of \eqref{weak_model_problem} reads: find
$\cdu\eqbydef(\du,\dtu)\in\CVh{}$ such that
\begin{equation}
  \label{discrete_problem}
  \dbilin{\boldsymbol{a}^{(\epsilon)}_h}{\cdu}{\cdv}
  = \inerprod{f}{\dv}{0}{\Th},
  \quad\forall\cdv\in\CVh{},
\end{equation}
where $\epsilon\in\{0,\pm1\}$ is the symmetrization parameter. The discrete bilinear form $\boldsymbol{a}^{(\epsilon)}_h:\CVh{}\times\CVh{}\to\R$ admits the flux-based decomposition
\begin{equation*}
  \label{bilinear_form_hdg}
  \dbilin{\boldsymbol{a}^{(\epsilon)}_h}{\cdu}{\cdv}
  \eqbydef
  \inerprod{-\dsg(\du)}{\grdh{\dv}}{0}{\Th}
  +\dualprod{\dtsg(\cdu)}{\tjump{\cdv}}{0}{\bnd{\Th}}
  +\epsilon\dualprod{\dsg(\dv)}{\tjump{\cdu}}{0}{\bnd{\Th}},
\end{equation*}
where $\dsg(\du)\eqbydef-\bkap\grdh{\du}$, and $\dtsg(\cdu)$ is its trace approximation on the mesh skeleton defined by
\begin{equation}
  \label{tracesg}
  \dtsg(\cdu)\eqbydef\dsg(\du)+\tau\tjump{\cdu}
  \quad\text{on}\quad\bnd{\Th},
\end{equation}
with $\tau\in\lebesgue{\infty}{\bnd{\Th}}$ an appropriate stabilization parameter (see, \eg \cref{definition:penalty_parameter}).
\end{lemma}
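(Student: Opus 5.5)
The plan is to start from the standard (primal) form of the compact H-IP bilinear form and rearrange it term by term until it matches the flux-based decomposition in the statement. I take the compact H-IP method in its usual form: find $\cdu=(\du,\dtu)\in\CVh{}$ such that, for all $\cdv=(\dv,\dtv)\in\CVh{}$,
\[
  \inerprod{\bkap\grdh{\du}}{\grdh{\dv}}{0}{\Th}
  -\dualprod{\bkap\grdh{\du}}{\tjump{\cdv}}{0}{\bnd{\Th}}
  -\epsilon\dualprod{\bkap\grdh{\dv}}{\tjump{\cdu}}{0}{\bnd{\Th}}
  +\dualprod{\tau\tjump{\cdu}}{\tjump{\cdv}}{0}{\bnd{\Th}}
  =\inerprod{f}{\dv}{0}{\Th}.
\]
This form is obtained elementwise by integrating by parts and replacing the element trace of $\du$ with the single-valued $\dtu$ in the consistency terms. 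Its face terms are naturally posed on the multiset $\bnd{\Th}$ through the HDG-jump $\tjump{\cdot}$ of \cref{def_traces_operators}. With the penalty written as $\tau\,\tjump{\cdu}\cdot\tjump{\cdv}$, where $\tau$ may be double-valued, the face product reduces to $\tau(\du-\dtu)(\dv-\dtv)$ because $\vect{n}_{\e{},\f{}}\cdot\vect{n}_{\e{},\f{}}=1$.

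Next I substitute $\dsg(\du)=-\bkap\grdh{\du}$ throughout.
\begin{itemize}
  \item The volume term becomes $\inerprod{-\dsg(\du)}{\grdh{\dv}}{0}{\Th}$.
  \item The consistency term $-\dualprod{\bkap\grdh{\du}}{\tjump{\cdv}}{0}{\bnd{\Th}}$ becomes $\dualprod{\dsg(\du)}{\tjump{\cdv}}{0}{\bnd{\Th}}$.
  \item The symmetrization term becomes $\epsilon\dualprod{\dsg(\dv)}{\tjump{\cdu}}{0}{\bnd{\Th}}$.
\end{itemize}

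I then group the consistency term with the penalty term, both tested against $\tjump{\cdv}$. Since the inner product on $\bnd{\Th}$ is bilinear and is a sum over the pairs $(\e{},\f{})$, I get
\[
  \dualprod{\dsg(\du)}{\tjump{\cdv}}{0}{\bnd{\Th}}+\dualprod{\tau\tjump{\cdu}}{\tjump{\cdv}}{0}{\bnd{\Th}}
  =\dualprod{\dsg(\du)+\tau\tjump{\cdu}}{\tjump{\cdv}}{0}{\bnd{\Th}}.
\]
The right-hand side is exactly $\dualprod{\dtsg(\cdu)}{\tjump{\cdv}}{0}{\bnd{\Th}}$ by \eqref{tracesg}, which gives the claimed decomposition. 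On boundary faces $\dtv=0$, so the Dirichlet condition is already built into $\tjump{\cdv}$ and needs no separate treatment.

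The main obstacle is not algebraic. It is fixing the sign and orientation conventions of the starting H-IP form, in particular that the penalty really takes the vector form $\tau\tjump{\cdu}\cdot\tjump{\cdv}$ and that $\tau$ is evaluated side-wise on $\bnd{\Th}$ rather than on $\Fh$. I would check this by verifying consistency: for the exact solution with $\hat u=u|_{\Fh}$, one has $\tjump{\boldsymbol{u}}=0$ and $\dtsg=\dsg(u)$. Integrating by parts then reproduces \eqref{weak_model_problem} tested against $\dv$, using single-valuedness of the normal flux and of $\dtv$ on $\Fhi$.
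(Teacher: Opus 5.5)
Your proposal is correct and follows essentially the same route as the paper. The paper's proof simply substitutes the definition \eqref{tracesg} of the numerical flux into the flux-based form and recognizes the standard H-IP bilinear form, citing a reference for that form; this is exactly your regrouping of the consistency and penalty terms, read in the reverse direction.
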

\begin{proof}
Plugging \eqref{tracesg} into $\boldsymbol{a}^{(\epsilon)}_h$, we recover the usual formulation of the discrete H-IP problem; see \eg \cite[Section~3, p.~91]{EtangsaleCAMWA2022}.
\end{proof}
The parameter $\epsilon$ only affects the adjoint (symmetry) term $\dualprod{\dsg(\dv)}{\tjump{\cdu}}{0}{\bnd{\Th}}$. It controls the global symmetry of the discrete bilinear form and recovers, as special cases, the symmetric scheme (H-SIP, $\epsilon=1$), the incomplete scheme (H-IIP, $\epsilon=0$), and the nonsymmetric scheme (H-NIP, $\epsilon=-1$).
\begin{definition}[H-IP stabilization parameter]
\label{definition:penalty_parameter}
For each element $\e{}\in\Th$ and each face $\f{}\in\FE$, we set
\begin{equation}
  \label{penalty_term}
  \tau|_{\e{},\f{}}
  \eqbydef
  \alpha_0\,\C{T}^2\,\kappa_{\e{},\f{}}\,h_{\e{}}^{-1},
\end{equation}
where $\alpha_0>0$ is a user-chosen constant, $\C{T}$ is the constant in the discrete trace inequality~\eqref{discret_trace_ineq}, and $\kappa_{\e{},\f{}}\eqbydef\vect{n}_{\e{},\f{}}^\top\bkap_{\e{}}\vect{n}_{\e{},\f{}}$ denotes the normal diffusivity on $\f{}$ seen from $\e{}$.
\end{definition}
\begin{remark}[Well-posedness]
Provided that $\alpha_0$ in~\eqref{penalty_term} is chosen sufficiently large, the bilinear form $\boldsymbol{a}^{(\epsilon)}_h$ is coercive on $\CVh{}$. Therefore, by the Lax--Milgram lemma, problem~\eqref{discrete_problem} admits a unique solution. Standard arguments yield \emph{a priori} error estimates in the usual DG norms; see \eg~\cite{EtangsaleCAMWA2022}.
\end{remark}

\begin{lemma}[Discrete trace approximations]
\label{lemma_Trace}
Let $\cdu\eqbydef(\du,\dtu)\in\CVh{}$ be the solution of~\eqref{discrete_problem} and let $(\dsg(\du),\dtsg(\cdu))$ be as in \cref{lemma-flux-reform}. Then, the numerical traces $(\dtu,\dtsg(\cdu)\cdot\vect n)$ are single-valued on $\Fh$ and satisfy
\begin{subequations}
\label{traces_def}
\begin{align}
  \label{traces_def_u}
  \dtu
  &=\begin{cases}
    \wmean{\du}{\vomega^{\tau}}-\varrho^{\tau}_{1}\jump{\bkap\grdh{\du}}&\text{on }\Fhi,\\
    0 &\text{on }\Fhb.
    \end{cases}\\
  \label{traces_def_sg}
  \dtsg(\cdu)
  &=
  -\cwmean{\bkap\grdh{\du}}{\vomega^{\tau}}
  +\varrho^{\tau}_{0}\jump{\du}
  \quad\text{on}\quad\Fh,
\end{align}
\end{subequations}
where $\vomega^{\tau}$ and $\varrho^{\tau}_{0,1}$ are $\tau$-dependent
coefficients defined as follows:
\begin{itemize}
  \item For $\f{}\eqbydef\bnd{\e{1}}\cap\bnd{\e{2}}\in\Fhi$:
  \[
    \vomega^{\tau}|_{\f{}}
    \eqbydef(\omega^{\tau}_1,\omega^{\tau}_2),
    \quad
    \varrho^{\tau}_{0}|_{\f{}}
    \eqbydef\frac{\tau_1\tau_2}{\tau_1+\tau_2},
    \quad
    \varrho^{\tau}_{1}|_{\f{}}
    \eqbydef\frac{1}{\tau_1+\tau_2},
  \]
  where $\tau_i\eqbydef\tau|_{\e{i},\f{}}$ and
  $\omega^{\tau}_i\eqbydef\tau_i/(\tau_1+\tau_2)$, $i=1,2$.
  \item For $\f{}\eqbydef\bnd{\e{}}\cap\bnddom\in\Fhb$:
  $\varrho^{\tau}_{0}|_{\f{}}\eqbydef\tau_{\e{},\f{}}$ (and $\varrho^{\tau}_{1}$
  is not needed).
\end{itemize}
In particular, \eqref{traces_def_sg} shows that $\dtsg(\cdu)$ can be expressed
solely in terms of $\du$; in the sequel we write $\dtsg(\du)$ for this reduced
expression.
\end{lemma}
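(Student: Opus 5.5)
The proof rests on one idea: test the H-IP problem~\eqref{discrete_problem} with pure skeletal test functions $\cdv=(0,\dtv)$, $\dtv\in\TVh{}$, which yields a local transmission condition on each interior face. With $\dv=0$ the volume term and the adjoint term $\epsilon\dualprod{\dsg(\dv)}{\tjump{\cdu}}{0}{\bnd{\Th}}$ both vanish, and $\tjump{\cdv}|_{\e{},\f{}}=-\dtv\,\vect{n}_{\e{},\f{}}$. Hence
\[
  \dualprod{\dtsg(\cdu)}{\tjump{\cdv}}{0}{\bnd{\Th}}
  =-\sumFi\int_{\f{}}\bigl(\dtsg_1\cdot\vect n_1+\dtsg_2\cdot\vect n_2\bigr)\dtv\,\mathrm{d}s
  =0 .
\]
Boundary faces drop out because $\dtv=0$ on $\Fhb$. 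This also shows why $\epsilon$ plays no role, so the result holds for all $\epsilon\in\{0,\pm1\}$.

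Next I must pass from this weak statement to a pointwise one. Since $\bkap$ is piecewise constant and the mesh is affine, $\dtsg_i\cdot\vect n_i=-\bkap_i\grdh{\du}|_{\e{i}}\cdot\vect n_i+\tau_i(\du_i-\dtu)$ lies in $\Pk{k}{\f{}}$. Here $\tau_i$ is constant on $\f{}$, and the gradient term has degree $k-1$. Taking $\dtv$ supported on a single face and equal to this sum gives $\jump{\dtsg(\cdu)}=0$ on every $\f{}\in\Fhi$. This is the single-valuedness of the normal flux. The trace $\dtu$ is single-valued by construction in $\TVh{}$. This polynomial-degree check is the only delicate point: the conclusion must hold pointwise and not merely in a projected sense.

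Writing the conservation condition out with $\sigma_i\eqbydef-\bkap_i\grdh{\du}|_{\e{i}}$ gives
\[
  \sigma_1\cdot\vect n_1+\sigma_2\cdot\vect n_2+\tau_1(\du_1-\dtu)+\tau_2(\du_2-\dtu)=0 .
\]
Solving for $\dtu$ yields
\[
  \dtu=\frac{\tau_1\du_1+\tau_2\du_2}{\tau_1+\tau_2}
  +\frac{1}{\tau_1+\tau_2}\,\jump{\sigma}
  =\wmean{\du}{\vomega^{\tau}}-\varrho^{\tau}_1\jump{\bkap\grdh{\du}} ,
\]
which is~\eqref{traces_def_u}. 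On $\Fhb$ we have $\dtu=0$ by the definition of $\TVh{}$.

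For~\eqref{traces_def_sg} I substitute this $\dtu$ into $\dtsg_1=\sigma_1+\tau_1(\du_1-\dtu)\vect n_1$. The normal component is the only meaningful part, and it is single-valued. This gives
\[
  \du_1-\dtu=\omega^{\tau}_2(\du_1-\du_2)+\varrho^{\tau}_1\jump{\bkap\grdh{\du}} .
\]
The jump term then contributes $\tau_1\varrho^{\tau}_1\jump{\bkap\grdh{\du}}=\omega^{\tau}_1(\jump{\bkap\grdh{\du}})$, and combining it with $\sigma_1\cdot\vect n_1$ produces the conjugate weighted mean $-\cwmean{\bkap\grdh{\du}}{\vomega^{\tau}}\cdot\vect n_1$. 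The primal part becomes $\tau_1\omega^{\tau}_2(\du_1-\du_2)=\varrho^{\tau}_0\jump{\du}\cdot\vect n_1$. This is the routine algebra.

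On $\Fhb$ the definitions give directly $\dtsg=\sigma+\tau\du\vect n=-\cwmean{\bkap\grdh{\du}}{\vomega^{\tau}}+\varrho^{\tau}_0\jump{\du}$, with $\varrho^{\tau}_0=\tau_{\e{},\f{}}$. Finally, the reduced expression for $\dtsg$ involves only $\du$, which justifies writing $\dtsg(\du)$.
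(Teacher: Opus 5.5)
Your proposal is correct and follows the same route as the paper. As in the paper, you test with $(0,\dtv)$ to obtain the transmission condition $\jump{\dtsg(\cdu)}=0$ on $\Fhi$, isolate $\dtu$, and substitute back into~\eqref{tracesg}. Your polynomial-degree observation is the rigorous content behind the paper's brief appeal to ``density'': since $\bkap$ and $\tau$ are facewise constant on an affine mesh, $\jump{\dtsg(\cdu)}|_{\f{}}\in\Pk{k}{\f{}}$, so it may itself be taken as the test function.
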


\begin{proof}
The proof is immediate. Testing \eqref{discrete_problem} with $\cdv=(0,\dtv)$ and using the flux decomposition~\eqref{tracesg}, one obtains $\dualprod{\jump{\dtsg(\cdu)}}{\dtv}{0}{\Fh}=0$ for all $\dtv\in\TVh{}$, which implies $\jump{\dtsg(\cdu)}=0$ on $\Fhi$ by density. Relation~\eqref{traces_def_u} then follows immediately by isolating $\dtu$
from this conservation condition. The second relation~\eqref{traces_def_sg} is obtained by substituting \eqref{traces_def_u} into~\eqref{tracesg}.
\end{proof}
\begin{remark}[Natural bounds for $\varrho^{\tau}_{0,1}$]
\label{bound_dg_parameters}
Let $\f{}=\bnd{\e{1}}\cap\bnd{\e{2}}\in\Fhi$ and set $\tau_i\eqbydef\tau|_{\e{i},\f{}}$. Following \cref{lemma_Trace}, $\varrho^{\tau}_{0}|_{\f{}}$ and $\varrho^{\tau}_{1}|_{\f{}}$ are each one half of the harmonic mean and inverse arithmetic mean of $\tau_1$ and $\tau_2$, respectively, reflecting their natural duality. Hence,
\begin{subequations}\label{estimates_coefs}
\begin{align}
  \label{estimates_coef1}
  \varrho^{\tau}_{0}|_{\f{}}&\le \min(\tau_1,\tau_2),\\
  \label{estimates_coef2}
  \varrho^{\tau}_{1}|_{\f{}}&\le \min(\tau_1^{-1},\tau_2^{-1}).
\end{align}
\end{subequations}
These bounds will be used later in the stability and error analysis of \cref{S:4}.
\end{remark}

\subsection{Elimination of the skeletal unknown}

The key strategy consists in eliminating the discrete trace $\dtu$ from all skeletal contributions. To this end, we repeatedly use the identity relation~\eqref{magic3} from \cref{lemma:identity_rel}, together with the explicit expression of $\dtu$ and $\dtsg(\cdu)$ given in~\eqref{traces_def}. We first reduce the symmetry term, then the combined consistency--penalty term.
\begin{lemma}[Elimination of the skeletal unknown]
\label{lemma_sym_decomp}
Let $\cdu\eqbydef(\du,\dtu)\in\CVh{}$ be the solution of \eqref{discrete_problem}, and let the triplet $(\vomega^{\tau},\varrho^{\tau}_{0},\varrho^{\tau}_{1})$ be as in \cref{lemma_Trace}. Then, for all $\cdv\in\CVh{}$,
\begin{align}
  \label{symmetry_term}
  \dualprod{\dsg(\dv)}{\tjump{\cdu}}{0}{\bnd{\Th}}
  &=
  -\dualprod{\cwmean{\bkap\grdh{\dv}}{\vomega^{\tau}}}{\jump{\du}}{0}{\Fh}
  -\dualprod{\varrho^{\tau}_{1}\jump{\bkap\grdh{\du}}}{\jump{\bkap\grdh{\dv}}}{0}{\Fhi},
  \\
  \label{consistency_term}
  \dualprod{\dtsg(\cdu)}{\tjump{\cdv}}{0}{\bnd{\Th}}
  &=
  -\dualprod{\cwmean{\bkap\grdh{\du}}{\vomega^{\tau}}}{\jump{\dv}}{0}{\Fh}
  +\dualprod{\varrho^{\tau}_{0}\jump{\du}}{\jump{\dv}}{0}{\Fh}.
\end{align}
\end{lemma}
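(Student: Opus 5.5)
Both identities follow from applying \cref{lemma:identity_rel} with the weighting function $\vomega^{\tau}$ and then substituting the closed-form expressions~\eqref{traces_def} of \cref{lemma_Trace}. The choice of weights is what makes everything collapse: taking $\vomega=\vomega^{\tau}$ in~\eqref{magic3} makes the residual factor $\wmean{\cdot}{\vomega^{\tau}}-\hat\varphi$ match exactly the structure of $\dtu$ in~\eqref{traces_def_u}.

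\emph{Symmetry term~\eqref{symmetry_term}.} We apply~\eqref{magic3} with $\vect{b}=\dsg(\dv)=-\bkap\grdh{\dv}$ and $\boldsymbol{\varphi}=\cdu=(\du,\dtu)$. This is admissible since $\bkap$ is piecewise constant, so $\vect b\in[\hilbert{1}{\Th}{}]^d$, and $\dtu=0$ on $\Fhb$. We obtain
\[
\dualprod{\dsg(\dv)}{\tjump{\cdu}}{0}{\bnd{\Th}}
=-\dualprod{\cwmean{\bkap\grdh{\dv}}{\vomega^{\tau}}}{\jump{\du}}{0}{\Fh}
-\dualprod{\jump{\bkap\grdh{\dv}}}{\wmean{\du}{\vomega^{\tau}}-\dtu}{0}{\Fhi}.
\]
By~\eqref{traces_def_u}, $\wmean{\du}{\vomega^{\tau}}-\dtu=\varrho^{\tau}_1\jump{\bkap\grdh{\du}}$ on $\Fhi$, which is a scalar. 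Substituting this gives~\eqref{symmetry_term} directly.

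\emph{Consistency--penalty term~\eqref{consistency_term}.} Here $\dtsg(\cdu)$ is double-valued as a vector on $\bnd{\Th}$, since it equals $\dsg(\du)+\tau\tjump{\cdu}$, so \cref{lemma:identity_rel} cannot be applied to it naively. The cleanest route is to use only its normal component, which is all that enters because $\tjump{\cdv}$ is normal. Write $\dtsg(\cdu)|_{\e{},\f{}}\cdot\vect n_{\e{},\f{}}=\hat\sigma_{\f{}}\,(\vect n_{\e{}}\cdot\vect n_{\f{}})$, where $\hat\sigma_{\f{}}$ is the single-valued normal flux from \cref{lemma_Trace}. On each face, the sum over the two sides of $\hat\sigma_{\f{}}\vect n_i\cdot\vect n_{\f{}}\,(\dv_i-\dtv)$ then equals $\dtsg\cdot\jump{\dv}$, because the $\dtv$-contributions cancel by flux continuity. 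Equivalently, one applies~\eqref{magic3} to a vector field whose jump vanishes on $\Fhi$. In this case the second term of~\eqref{magic3} disappears, and by consistency of the weighted means on a single-valued quantity, $\cwmean{\dtsg}{\vomega^{\tau}}=\dtsg$. This yields
\[
\dualprod{\dtsg(\cdu)}{\tjump{\cdv}}{0}{\bnd{\Th}}=\dualprod{\dtsg(\du)}{\jump{\dv}}{0}{\Fh}.
\]
Inserting~\eqref{traces_def_sg} then gives~\eqref{consistency_term}. On $\Fhb$, we check directly that $\dtsg=-\bkap\grdh{\du}+\tau\du\vect n$, which matches the convention $\varrho^{\tau}_0=\tau$ and makes the weighted mean one-sided.

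\emph{Main obstacle.} The delicate point is the rigorous justification that $\dtsg(\cdu)$ has single-valued normal component, so that the $\dtv$ terms cancel. \Cref{lemma_Trace} derives $\jump{\dtsg}=0$ only in the polynomial-weak sense against $\TVh{}$. Since $\dtsg\cdot\vect n$ restricted to each face is a polynomial of degree $\le k$ (because $\bkap$ is constant and the meshes are affine), it lies in $\TVh{}$, and this weak statement is in fact pointwise. We will spell out this step carefully.
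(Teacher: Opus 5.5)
Your proposal is correct and follows the paper's proof. For the symmetry term you apply \eqref{magic3} with $\vect b=\dsg(\dv)$ and $\vomega=\vomega^{\tau}$ and substitute \eqref{traces_def_u}; for the consistency--penalty term you apply \eqref{magic3} with $\vect b=\dtsg(\cdu)$, let flux conservation remove the $\dtv$-term, and insert \eqref{traces_def_sg}. Your extra care usefully tightens two steps the paper passes over with ``by density'': only the single-valued normal component of $\dtsg(\cdu)$ enters, and $\jump{\dtsg(\cdu)}=0$ holds pointwise because this jump is a degree-$k$ face polynomial that is itself admissible as a test function in $\TVh{}$.
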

\begin{proof}
\textbf{Symmetry term.} We apply \eqref{magic3} with $\vect{b}=\dsg(\dv)$ and $\boldsymbol\varphi=(\du,\dtu)$. Since $\dtu=0$ on $\Fhb$,
\begin{equation}
  \label{symmetry_decomp}
  \dualprod{\dsg(\dv)}{\tjump{\cdu}}{0}{\bnd{\Th}}
  =
  \dualprod{\cwmean{\dsg(\dv)}{\vomega}}{\jump{\du}}{0}{\Fh}
  +\dualprod{\jump{\dsg(\dv)}}{\wmean{\du}{\vomega}-\dtu}{0}{\Fhi}.
\end{equation}
Using~\eqref{traces_def_u} and choosing $\vomega=\vomega^\tau$ gives $\wmean{\du}{\vomega^\tau}-\dtu=\varrho_1^\tau\jump{\bkap\grdh{\du}}$, hence
\[
  \dualprod{\jump{\dsg(\dv)}}{\wmean{\du}{\vomega^\tau}-\dtu}{0}{\Fhi}
  =
  -\dualprod{\varrho^{\tau}_{1}\jump{\bkap\grdh{\du}}}{\jump{\bkap\grdh{\dv}}}{0}{\Fhi}.
\]
Plugging into \eqref{symmetry_decomp} yields \eqref{symmetry_term}.\\
\smallskip
\textbf{Consistency--penalty term.} Similarly, we apply \eqref{magic3} with $\vect{b}=\dtsg(\cdu)$ and $\boldsymbol\varphi=(\dv,\dtv)$. Since $\dtv=0$ on $\Fhb$,
\begin{equation*}
  \dualprod{\dtsg(\cdu)}{\tjump{\cdv}}{0}{\bnd{\Th}}
  =
  \dualprod{\cwmean{\dtsg(\cdu)}{\vomega}}{\jump{\dv}}{0}{\Fh}
  +\dualprod{\jump{\dtsg(\cdu)}}{\wmean{\dv}{\vomega}-\dtv}{0}{\Fhi}.
\end{equation*}
The conservation condition $\jump{\dtsg(\cdu)}=0$ on $\Fhi$ makes the last term vanish. Finally, substituting~\eqref{traces_def_sg} yields \eqref{consistency_term}.
\end{proof}

\subsection{The Unified IP-DG formulation}

We can now state the main result of this section. Hereafter, we write $(\vomega,\varrho_{0},\varrho_{1})$ in place of $(\vomega^{\tau},\varrho_{0}^{\tau},\varrho_{1}^{\tau})$ for brevity. 
\begin{theorem}[Equivalence of H-IP and UIP-DG]
\label{thm_UIP_method}
Find $\du\in\Vh$ such that
\begin{equation}
  \label{UIP_DG_formulation}
  \dbilin{a_{h}^{(\epsilon)}}{\du}{\dv}
  =\inerprod{f}{\dv}{0}{\Th},
  \quad\forall\dv\in\Vh,
\end{equation}
where $\epsilon\in\{0,\pm1\}$. The discrete bilinear form
$a^{(\epsilon)}_h:\Vh\times\Vh\to\R$ decomposes as
\begin{equation*}
  \dbilin{a^{(\epsilon)}_h}{\du}{\dv}
  \eqbydef
  \inerprod{\bkap\grdh{\du}}{\grdh{\dv}}{0}{\Th}
  +\dbilin{a^{(c)}_h}{\du}{\dv}
  +\epsilon\dbilin{a^{(c)}_h}{\dv}{\du}
  +\dbilin{s^{(\epsilon)}_h}{\du}{\dv},
\end{equation*}
where the consistency, symmetry, and stabilization terms are defined by
\begin{align}
  \label{consistency_uip_term}
  \dbilin{a^{(c)}_h}{\du}{\dv}
  &\eqbydef
  -\dualprod{\cwmean{\bkap\grdh{\du}}{\vomega}}{\jump{\dv}}{0}{\Fh},\\
  \label{stability_uip_term}
  \dbilin{s^{(\epsilon)}_h}{\du}{\dv}
  &\eqbydef
  \dualprod{\varrho_{0}\jump{\du}}{\jump{\dv}}{0}{\Fh}
  -\epsilon\dualprod{\varrho_{1}\jump{\bkap\grdh{\du}}}
  {\jump{\bkap\grdh{\dv}}}{0}{\Fhi},
\end{align}
and the triplet $(\vomega,\varrho_{0},\varrho_{1})$ is uniquely determined
by $\tau$ through the hybrid formulation (see, \cref{lemma_Trace}). The DG formulation \eqref{UIP_DG_formulation} will be referred to as the \emph{Unified Interior Penalty Discontinuous Galerkin} (UIP-DG) method. Hence, the HIP and UIP-DG problems are equivalent in the sense that:
\begin{enumerate}
  \item[(i)] if $(\du^{\mathrm{HIP}},\dtu^{\mathrm{HIP}})\in\CVh{}$ solves
  \eqref{discrete_problem}, then $\du^{\mathrm{HIP}}\in\Vh$ is also the unique solution
  of~\eqref{UIP_DG_formulation} (Uniqueness of the UIP-DG solution is discussed in \cref{S:4.2}.);
  \item[(ii)] if $\du\in\Vh$ solves \eqref{UIP_DG_formulation} and $\dtu\in\TVh{}$
  is reconstructed from $\du$ using~\eqref{traces_def_u}, then $(\du,\dtu)$
  is the unique solution of~\eqref{discrete_problem}.
\end{enumerate}
\end{theorem}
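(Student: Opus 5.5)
Part (i) comes from substituting the two identities of \cref{lemma_sym_decomp} into the flux form of $\boldsymbol{a}^{(\epsilon)}_h$ from \cref{lemma-flux-reform}. Let $(\du^{\mathrm{HIP}},\dtu^{\mathrm{HIP}})$ solve \eqref{discrete_problem} and take any $\dv\in\Vh$. We test with $\cdv=(\dv,0)\in\CVh{}$; in fact any $\dtv$ works, since \eqref{consistency_term} does not depend on it. The volume term gives $\inerprod{\bkap\grdh{\du}}{\grdh{\dv}}{0}{\Th}$ because $-\dsg(\du)=\bkap\grdh{\du}$. The consistency term is replaced by \eqref{consistency_term}, which yields $\dbilin{a^{(c)}_h}{\du}{\dv}+\dualprod{\varrho_0\jump{\du}}{\jump{\dv}}{0}{\Fh}$. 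The $\epsilon$-term is replaced by \eqref{symmetry_term}, which yields $\epsilon\,\dbilin{a^{(c)}_h}{\dv}{\du}-\epsilon\dualprod{\varrho_1\jump{\bkap\grdh{\du}}}{\jump{\bkap\grdh{\dv}}}{0}{\Fhi}$. Collecting terms gives exactly $\dbilin{a^{(\epsilon)}_h}{\du}{\dv}=\inerprod{f}{\dv}{0}{\Th}$. So $\du^{\mathrm{HIP}}$ solves \eqref{UIP_DG_formulation}, and it is the unique solution once coercivity of $a^{(\epsilon)}_h$ is available (\cref{S:4.2}).

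For part (ii), the route through uniqueness is the shortest. The H-IP problem is uniquely solvable by the well-posedness remark, and by (i) its primal component solves UIP-DG. So if $\du$ solves \eqref{UIP_DG_formulation}, uniqueness of the UIP-DG solution forces $\du=\du^{\mathrm{HIP}}$. By \cref{lemma_Trace}, $\dtu^{\mathrm{HIP}}$ is given by \eqref{traces_def_u} applied to $\du^{\mathrm{HIP}}$, so the reconstructed trace coincides with $\dtu^{\mathrm{HIP}}$. Hence $(\du,\dtu)$ is the unique H-IP solution.

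It would be more self-contained to verify (ii) directly, without invoking uniqueness of both problems. The plan would be the following.
\begin{itemize}
\item Define $\dtu$ by \eqref{traces_def_u}. One must check $\dtu\in\TVh{}$: it is zero on $\Fhb$, and on each face it is a polynomial of degree $\le k$, because $\bkap$ is piecewise constant and the $\tau_i$ are constant per face, so $\vomega$ and $\varrho_1$ are constant.
\item Show that this choice gives $\jump{\dtsg(\cdu)}=0$ on $\Fhi$. This is a purely algebraic computation on each face: with $\dtu$ as above, $\tau_1(\du_1-\dtu)+\tau_2(\du_2-\dtu)$ cancels $\jump{\dsg(\du)}$, since $\dtu$ was obtained by solving precisely this relation.
\item Given this, testing \eqref{discrete_problem} with $(0,\dtv)$ holds for every $\dtv$. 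Testing with $(\dv,\dtv)$ reduces to the $(\dv,0)$ case, and that case is equivalent to \eqref{UIP_DG_formulation} by re-running the identities of \cref{lemma_sym_decomp}.
\end{itemize}

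The main obstacle in this direct verification is that \cref{lemma_sym_decomp} is stated for the solution $\cdu$. Its proof, however, uses only two facts: relation \eqref{traces_def_u} and the conservation $\jump{\dtsg(\cdu)}=0$. Both have just been established for the reconstructed pair, so the lemma applies verbatim.
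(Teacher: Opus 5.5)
Your proof is correct. Part (i) is exactly the paper's argument. Your ``direct verification'' of (ii) is also precisely what the paper does:
\begin{itemize}
\item reconstruct $\hat u_h$ via \eqref{traces_def_u};
\item split the test function as $(v_h,0)+(0,\hat v_h)$;
\item handle the first piece with \cref{lemma_sym_decomp};
\item kill the second piece with \eqref{magic3} and flux conservation;
\item conclude with uniqueness of the H-IP problem.
\end{itemize}
On this route you are slightly more careful than the paper, which only asserts that the reconstruction is consistent ``by construction''. You check that $\hat u_h$ actually lies in the discrete trace space, using that the diffusion tensor and $\tau$ are piecewise constant. You also observe that \cref{lemma_sym_decomp}, although stated for the H-IP solution, uses only \eqref{traces_def_u} and the conservation of the numerical flux, so it transfers to the reconstructed pair.

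Your uniqueness shortcut for (ii) is a genuinely different and valid route, but it is more expensive. It requires both well-posedness results: the H-IP remark, and the UIP-DG coercivity of \cref{thm:coercivity} with its threshold $\alpha_0>2\eta_0$. The direct argument instead shows, with no coercivity at all, that every UIP-DG solution lifts to an H-IP solution; H-IP uniqueness is needed only for the word ``unique''. Together with (i), the direct argument in fact puts the two solution sets in bijection. Existence and uniqueness therefore transfer from either problem to the other, which is the stronger form of the equivalence.
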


\begin{proof}
(\emph{Forward implication})
Assertion \textnormal{(i)} follows directly from \cref{lemma_sym_decomp}: if
$(\du^{\mathrm{HIP}},\dtu^{\mathrm{HIP}})\in\CVh{}$ solves
\eqref{discrete_problem}, then substituting the skeletal contributions by the
expressions obtained in \cref{lemma_sym_decomp} yields precisely the primal
formulation \eqref{UIP_DG_formulation} satisfied by $\du^{\mathrm{HIP}}$.

\smallskip
\noindent
(\emph{Reverse implication})
Let $\du\in\Vh$ solve \eqref{UIP_DG_formulation}. We reconstruct $\dtu\in\TVh{}$ and the numerical flux $\dtsg(\du)$ from $\du$ by the explicit face-local formulas of \cref{lemma_Trace}. By construction, these formulas are equivalent, on each face, to the local H-IP relations written in terms of $\cdu\eqbydef(\du,\dtu)$; in particular,
\[
\dtsg(\cdu)=\dsg(\du)+\tau\tjump{(\du,\dtu)},
\qquad
\dsg(\du)\eqbydef -\bkap\grdh{\du},
\]
Let $\cdv=(\dv,\dtv)\in\CVh{}$. By linearity,
\[
\dbilin{\boldsymbol a_h^{(\epsilon)}}{(\du,\dtu)}{\cdv}
=
\dbilin{\boldsymbol a_h^{(\epsilon)}}{(\du,\dtu)}{(\dv,0)}
+
\dbilin{\boldsymbol a_h^{(\epsilon)}}{(\du,\dtu)}{(0,\dtv)}.
\]
Using now \cref{lemma_sym_decomp} and the fact that $\du$ solves
\eqref{UIP_DG_formulation}, the first term satisfies
\[
\dbilin{\boldsymbol a_h^{(\epsilon)}}{(\du,\dtu)}{(\dv,0)}
=
\dbilin{a_h^{(\epsilon)}}{\du}{\dv}
=
\inerprod{f}{\dv}{0}{\Th}.
\]
For the second term, using the flux form of the hybrid bilinear form and applying \eqref{magic3} with $\vect{b}=\dtsg(\cdu)$ and $\boldsymbol\varphi=(0,\dtv)$ we obtain
\[
\dbilin{\boldsymbol a_h^{(\epsilon)}}{(\du,\dtu)}{(0,\dtv)}
=
\dualprod{\dtsg(\cdu)}{\tjump{(0,\dtv)}}{0}{\bnd{\Th}}=-\dualprod{\jump{\dtsg(\du)}}{\dtv}{0}{\Fhi}=0,
\]
since $\jump{\dtsg(\du)}=0$ on $\Fhi$. Hence,
$\dbilin{\boldsymbol a_h^{(\epsilon)}}{(\du,\dtu)}{(0,\dtv)}=0$.
Therefore,
\[
\dbilin{\boldsymbol a_h^{(\epsilon)}}{(\du,\dtu)}{\cdv}
=
\inerprod{f}{\dv}{0}{\Th},
\quad \forall \cdv=(\dv,\dtv)\in\CVh{},
\]
so that $(\du,\dtu)$ is also a solution of \eqref{discrete_problem}. By uniqueness
of the H-IP problem, $(\du,\dtu)$ is its unique solution.
\end{proof}

The parameter $\epsilon\in\{0,\pm1\}$ yields three variants: IUIP-DG ($\epsilon=0$), SUIP-DG ($\epsilon=1$), and NUIP-DG ($\epsilon=-1$). We next provide an equivalent decomposition of the consistency term \eqref{consistency_uip_term} that clarifies the role of the skeletal weights.
\begin{lemma}[Decomposition of the consistency term]
\label{lemma:w_invariance}
For all $\dv,\dw\in\Vh$, the consistency term satisfies
\begin{equation}
  \label{w_decomp}
  \dbilin{a^{(c)}_h}{\dv}{\dw}
  =
  -\dualprod{\mean{\bkap\grdh{\dv}}}{\jump{\dw}}{0}{\Fh}
  +\dualprod{\vect{\gamma}\,\jump{\bkap\grdh{\dv}}}{\jump{\dw}}{0}{\Fhi},
\end{equation}
where $\vect{\gamma}\eqbydef\jump{\tau}/(4\mean{\tau})$ is a dimensionless vector field supported on $\Fhi$ and aligned with the interface normal.
\end{lemma}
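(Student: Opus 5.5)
The plan is to prove \eqref{w_decomp} face by face, starting from the definition \eqref{consistency_uip_term} of $a^{(c)}_h$. The only task is to compare the conjugate weighted mean $\cwmean{\cdot}{\vomega}$, with $\vomega=\vomega^\tau$, to the arithmetic mean $\mean{\cdot}$ and to express the difference through jumps. On boundary faces $\f{}\in\Fhb$ there is nothing to prove: by \cref{def_traces_operators} both means reduce to the one-sided trace, so the boundary contributions to both sides coincide. This is also why the correction term in \eqref{w_decomp} only lives on $\Fhi$.

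Fix an interior face $\f{}=\bnd{\e{1}}\cap\bnd{\e{2}}$ and write $\vect b\eqbydef\bkap\grdh{\dv}$, with traces $\vect b_1,\vect b_2$. Since $\omega_1+\omega_2=1$, we have $\omega_i-\shalf=\pm\tfrac12(\omega_1-\omega_2)$, and a one-line computation gives
\[
\cwmean{\vect b}{\vomega}-\mean{\vect b}
=(\omega_2-\tfrac12)\vect b_1+(\omega_1-\tfrac12)\vect b_2
=-\tfrac12(\omega_1-\omega_2)(\vect b_1-\vect b_2).
\]
Next, use $\vect n_2=-\vect n_1$ on $\f{}$. This gives $\jump{\dw}=(\dw_1-\dw_2)\vect n_1$ and $\jump{\vect b}=(\vect b_1-\vect b_2)\cdot\vect n_1$, so that
\[
(\vect b_1-\vect b_2)\cdot\jump{\dw}=\jump{\vect b}\,(\dw_1-\dw_2)=\jump{\vect b}\,\vect n_1\cdot\jump{\dw}.
\]

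It then remains to identify the coefficient. With $\omega_i=\tau_i/(\tau_1+\tau_2)$ from \cref{lemma_Trace},
\[
\omega_1-\omega_2=\frac{\tau_1-\tau_2}{\tau_1+\tau_2}=\frac{\tau_1-\tau_2}{2\mean{\tau}}.
\]
Also, $\jump{\tau}=\tau_1\vect n_1+\tau_2\vect n_2=(\tau_1-\tau_2)\vect n_1$. Hence
\[
-\tfrac12(\omega_1-\omega_2)\vect n_1=-\frac{\jump{\tau}}{4\mean{\tau}}=-\vect\gamma,
\]
and $\vect\gamma$ is visibly normal-aligned and dimensionless. Combining the steps, on each interior face
\[
-\cwmean{\vect b}{\vomega}\cdot\jump{\dw}=-\mean{\vect b}\cdot\jump{\dw}+\vect\gamma\,\jump{\vect b}\cdot\jump{\dw}.
\]
Integrating over each face and summing over $\Fh$ then yields \eqref{w_decomp}.

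The only delicate point is the orientation bookkeeping in the last two steps. The choice of which neighbour is labelled $\e{1}$ is arbitrary, so one must check that both factors $\vect\gamma$ and $\jump{\vect b}\,\jump{\dw}$ are well defined, and that their product is invariant under swapping $1\leftrightarrow2$. I would verify this explicitly: $\vect\gamma$ is orientation-free, since $\jump{\tau}$ uses both normals, and $\jump{\vect b}$ is scalar. The sign flips of $\omega_1-\omega_2$ and of $\vect n_1$ cancel, so the product is unchanged.
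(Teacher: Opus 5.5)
Your proof is correct and follows the paper's own argument. You work face by face, write the conjugate weighted mean as the arithmetic mean minus $\tfrac12(\omega_1-\omega_2)$ times the trace difference, and identify $\tfrac12(\omega_1-\omega_2)\vect n_1=\jump{\tau}/(4\mean{\tau})$ via $\omega_i=\tau_i/(\tau_1+\tau_2)$, exactly as the paper does. The only differences are cosmetic: you keep full vector traces until dotting with $\jump{\dw}$, where the paper projects onto $\vect n_1$ first, and your orientation check is harmless but not needed, since every quantity in the identity is independent of the labeling.
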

\begin{proof}
The statement is immediate on $\Fhb$. Let $\f{}\in\Fhi$ and fix the orientation $\vect{n}_2=-\vect{n}_1$ so that $\jump{\dw}=(w_1-w_2)\vect{n}_1$. Only the normal component of $\bkap\grdh{\dv}$ contributes; setting $b_i\eqbydef(\bkap\grdh{\dv})_i\cdot\vect{n}_1$ for $i=1,2$,
\[
  \dualprod{\cwmean{\bkap\grdh{\dv}}{\vomega}}{\jump{\dw}}{0}{\f{}}
  =\dualprod{\cwmean{b}{\vomega}}{w_1-w_2}{0}{\f{}}
\]
Since $\omega_1+\omega_2=1$, a direct computation gives $\cwmean{b}{\vomega}=\omega_2 b_1+\omega_1 b_2=\mean{b}+\shalf(\omega_2-\omega_1)(b_1-b_2)$, whence
\[
  \dualprod{\cwmean{b}{\vomega}}{w_1-w_2}{0}{\f{}}
  =\dualprod{\mean{\bkap\grdh{\dv}}}{\jump{\dw}}{0}{\f{}}
  -\dualprod{\vect\gamma\jump{\bkap\grdh{\dv}}}{\jump{\dw}}{0}{\f{}},
\]
with $\vect\gamma:=\frac12(\omega_1-\omega_2)\vect{n}_{1}$. Using $\omega_i=\tau_i/(\tau_1+\tau_2)$ gives $\vect\gamma=\jump{\tau}/(4\mean{\tau})$.
\end{proof}

\begin{remark}[Structure of UIP-DG and links with existing schemes]
\label{rem:structure_uipdg}
All interface ingredients of UIP-DG are inherited from the H-IP stabilization $\tau$ through the triplet $(\vomega,\varrho_0,\varrho_1)$, where $\vomega$ governs the consistency/symmetry terms and $\varrho_{0,1}$ weights the primal and flux penalizations. \Cref{lemma:w_invariance} reveals that $a_h^{(c)}$ decomposes into the standard arithmetic-mean consistency term plus a flux-jump correction scaled by $\vect\gamma=\jump{\tau}/(4\mean{\tau})$. This correction vanishes when $\tau$ is \emph{single-valued} ($\tau_1=\tau_2$), in which case $\vomega$ reduces to arithmetic weights; even then, UIP-DG does not reduce to a classical IP-DG scheme since $\varrho_0=\tau/2$ and the flux-jump penalty weighted by $\varrho_1=1/(2\tau)$ persist. This degenerate setting corresponds to the analyses of~\cite{wells2011analysis,FabienNM2019}, which the present work extends to the practically relevant double-valued case. More generally, the transmissibility weights $\omega_i^\tau = \tau_i/(\tau_1+\tau_2)$, which by \cref{definition:penalty_parameter} read explicitly
\begin{equation*}
    \omega_i^\tau
    = \frac{\kappa_{\e{i},\f{}}\, h_{\e{i}}^{-1}}
           {\kappa_{\e{1},\f{}}\, h_{\e{1}}^{-1} + \kappa_{\e{2},\f{}}\, h_{\e{2}}^{-1}},
\end{equation*}
differ from the purely diffusivity-based weights $\omega_i^\kappa\eqbydef\kappa_{\e{i},\f{}}/(\kappa_{\e{1},\f{}}+\kappa_{\e{2},\f{}})$ of WIP-DG~\cite{BurmanSIAM2006,ESZIMA08} by the local mesh-size ratio $h_{\e{1}}/h_{\e{2}}$; the two coincide on uniform meshes ($h_{E_1}=h_{E_2}$). The flux-jump penalization $-\epsilon\,\varrho_1\jump{\bkap\grdh{\du}}
\cdot\jump{\bkap\grdh{\dv}}$ recovers a Douglas--Dupont-type stabilization~\cite{Douglas1976InteriorPP}, induced \emph{automatically} by the H-IP reduction. For $\epsilon=+1$, the negative sign --- here dictated by the adjoint-consistency structure inherited from~\cite{ABCMUnified} --- echoes an earlier observation of Codina, Principe and Baiges~\cite{CodinaCMAME2009}, who first identified, in the variational multiscale framework, that a flux-transmission condition between subscales naturally produces a sign-reversed penalization of the gradient jump. Although the structural origin differs (VMS continuous approximation versus H-IP reduction), the recurrence of the negative sign across two unrelated derivations is, in our view, not coincidental. The scalings $\varrho_0=\bigO{\tau}$ and $\varrho_1=\bigO{1/\tau}$ reflect the natural duality between primal and flux stabilizations, and will be used in \cref{S:4} to establish coercivity and boundedness of $a_h^{(\epsilon)}$.
\end{remark}

\begin{remark}[Extension to mixed boundary conditions]
\label{rem:bc_extensions}
The UIP-DG formulation~\eqref{UIP_DG_formulation} extends naturally to mixed boundary conditions. Let
\[
\partial\Omega=\Gamma_D\cup\Gamma_N,
\qquad
\Gamma_D\cap\Gamma_N=\emptyset,
\qquad
|\Gamma_D|>0,
\]
and consider
\[
u=g_D \quad\text{on }\Gamma_D,
\qquad
-\bkap\nabla u\cdot\vect n=g_N \quad\text{on }\Gamma_N,
\]
with $g_D\in H^{1/2}(\Gamma_D)$ and $g_N\in L^2(\Gamma_N)$. The boundary skeleton is partitioned as
\[
\Fhb=\FhD\cup\FhN,
\]
where $\FhD$ and $\FhN$ denote the sets of boundary faces lying on $\Gamma_D$ and $\Gamma_N$, respectively. All interior interface contributions in $a_h^{(\epsilon)}$ remain unchanged. On boundary faces, only the terms associated with the weak enforcement of the Dirichlet condition are kept, and they are restricted to $\FhD$. In particular, the boundary consistency, adjoint-consistency, and penalty terms appearing in the UIP-DG bilinear form are integrated on $\FhD$ only. On $\FhN$, no boundary penalty term is added, and the Neumann condition enters solely through the prescribed normal diffusive flux in the linear functional. The mixed-boundary UIP-DG problem therefore reads: find $\du\in\Vh$ such that, for all $\dv\in\Vh$,
\[
\dbilin{a^{(\epsilon)}_h}{\du}{\dv}=\ell^{(\epsilon)}_h(\dv),
\]
where $a_h^{(\epsilon)}$ is obtained from the bilinear form of \cref{thm_UIP_method} by restricting all boundary Dirichlet terms from $\Fhb$ to $\FhD$, and $\ell_h^{(\epsilon)}:\Vh\to\R$ is defined by
\[
\ell^{(\epsilon)}_h(\dv)
:= \inerprod{f}{\dv}{0}{\Th}
- \dualprod{g_N}{\dv}{0}{\FhN}
+ \epsilon\,\dualprod{\bkap\grdh{\dv}\cdot\vect n}{g_D}{0}{\FhD}
+ \dualprod{\varrho_0\,g_D}{\dv}{0}{\FhD}.
\]
Here, for any Dirichlet boundary face $\f{}\in\FhD$, the boundary penalty parameter is still defined by
\[
\varrho_0|_{\f{}}\eqbydef\tau_{\e{},\f{}},
\qquad \f{}\in\FhD.
\]
The consistency and stability arguments of \cref{S:4} extend to this setting with the usual modifications, the only additional contributions arising from the approximation of the Dirichlet datum on $\Gamma_D$ and from the natural Neumann boundary term on $\Gamma_N$.
\end{remark}

\section{A priori error analysis}
\label{S:4}

This section establishes \emph{a priori} error estimates for the UIP-DG method. The analysis is based on a Strang-type argument for DG formulations \cite{di2011mathematical,Riviere08bookDG} and relies on consistency, discrete coercivity, and boundedness of the UIP-DG bilinear form with respect to mesh-dependent energy norms. 

\subsection{Preliminaries: norms and trace tools}
\label{S:4.1}

We introduce the mesh-dependent norms used in the stability and error analysis and recall the trace inequalities required to control interface terms. Throughout this section, $\C{}$ denotes a generic positive constant independent of $h$, possibly depending on $k$, $d$, and the mesh shape-regularity parameter.

\paragraph{Trace inequalities}
Let $\Th$ be a shape-regular simplicial mesh and let $\f{}\in\FE$. The
\emph{continuous trace} inequality states that there exists $\C{tr}>0$ such that,
for all $v\in\hilbert{1}{\e{}}{}$,
\begin{equation}
  \label{eq:cont_trace_4.1}
  \norm{v}{0}{\f{}}^2
  \le\C{tr}\Bigl(h_{\e{}}^{-1}\norm{v}{0}{\e{}}^2
  + h_{\e{}}\norm{\grd{v}}{0}{\e{}}^2\Bigr).
\end{equation}
Since $\Vh|_{\e{}}\subset\Pk{k}{\e{}}$, combining \eqref{eq:cont_trace_4.1} with a standard inverse estimate \cite{Brenner2008} yields the \emph{discrete trace} inequality: there exists $\C{T}>0$ such that, for all $w\in\Pk{m}{\e{}}$ with $m\le k$,
\begin{equation}
  \label{discret_trace_ineq}
  \norm{w}{0}{\f{}}^2
  \le \C{T}^2\,h_{\e{}}^{-1}\norm{w}{0}{\e{}}^2.
\end{equation}

\paragraph{Energy norms}
For the stability analysis, we introduce the following energy norm, \ie for all $\dv\in\Vh$, 
\begin{equation}
  \label{nrj_norm1}
  \tnorm{\dv}^2
  \eqbydef
  \norm{\bkap^{\ohalf}\grdh{\dv}}{0}{\Th}^2
  +\seminorm{\dv}{0}{h}^2,
  \qquad
  \seminorm{\dv}{0}{h}^2
  \eqbydef\sumF\norm{\varrho_{0}|^{\ohalf}_{\f{}}\jump{\dv}}{0}{\f{}}^2.
\end{equation}
The seminorm $|\cdot|_{0,h}$ controls the penalization of interelement jumps
induced by the UIP numerical fluxes through the coefficient $\varrho_{0}$.
For the boundedness analysis, we also introduce an augmented norm on $V(h)\eqbydef V+\Vh$, defined for all $v\in V(h)$ by
\begin{equation}
  \label{nrj_norm2}
  \tnorm{v}_{*}^2
  \eqbydef
  \tnorm{v}^2+\seminorm{v}{1}{h}^2,
  \qquad
  \seminorm{v}{1}{h}^2
  \eqbydef\sumT h_{\e{}}
  \norm{\bkap^{\ohalf}\grdh{v}}{0}{\FE}^2.
\end{equation}
The seminorm $\seminorm{\cdot}{1}{h}$ is used to control diffusive interface traces arising in the consistency and boundedness estimates. By the discrete trace inequality~\eqref{discret_trace_ineq}, we infer that, for all $\dv\in\Vh$,
\begin{equation}
\label{eq:discrete-trace-control}
\seminorm{\dv}{1}{h}
\le
\C{dt}\norm{\bkap^{\ohalf}\grdh{\dv}}{0}{\Th},
\end{equation}
where $\C{dt}\eqbydef \eta_0^{\ohalf}\C{T}$ depends only on the mesh regularity parameters.

\paragraph{Useful interface bounds}
The next lemma and corollary gather the key interface estimates required to control the skeleton contributions in the coercivity and continuity analyses. 

\begin{lemma}[Extended Interface bounds]
\label{lem:extended-interface-bounds}
Let $\C{1}\eqbydef(\alpha_0 C_T^2)^{-\ohalf}>0$ be a constant independent of $h$ and of the diffusion contrast but depending only on the user-constant $\alpha_0$ in \eqref{penalty_term}. Then the following bounds hold;
\begin{enumerate}
  \item[\textnormal{(i)}] \textbf{Weighted consistency bound.}
  For all $(v,\dw)\in V(h)\times \Vh$,
  \begin{equation}
      \label{eq:extended-weighted-consistency}
      \abs{\dualprod{\cwmean{\bkap\grdh{v}}{\vomega}}
      {\jump{\dw}}{0}{\Fh}}
      \le
      \C{1}\seminorm{v}{1}{h}\,\seminorm{\dw}{0}{h},
  \end{equation}
  \item[\textnormal{(ii)}] \textbf{Flux-jump bound.}
  For all $v\in V(h)$,
  \begin{equation}
    \label{eq:extended-flux-jump}
    \norm{\varrho_1^{\ohalf}\jump{\bkap\grdh{v}}}{0}{\Fhi}
    \leq
    \C{1}\,\seminorm{v}{1}{h}.
  \end{equation}
\end{enumerate}
\end{lemma}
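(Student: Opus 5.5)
Both bounds will be proved face by face. On each face, I combine a weighted Cauchy--Schwarz inequality in $\R^2$ with the explicit form of $\tau$ from \cref{definition:penalty_parameter}, and then sum over faces.

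\textbf{Pointwise scaling.} The basic ingredient is a pointwise bound on the normal flux seen from one element. For $\f{}\in\FE$ and $v\in V(h)$, set $b_{\e{},\f{}}\eqbydef(\bkap\grdh{v})|_{\e{}}\cdot\vect{n}_{\e{},\f{}}$. Since $\bkap_{\e{}}$ is symmetric positive definite,
\[
|b_{\e{},\f{}}|=|(\bkap_{\e{}}^{\ohalf}\vect n_{\e{},\f{}})\cdot\bkap_{\e{}}^{\ohalf}\grdh{v}|\le \kappa_{\e{},\f{}}^{\ohalf}\,|\bkap^{\ohalf}\grdh{v}|_{\e{}}|.
\]
Hence, by \eqref{penalty_term},
\[
\tau_{\e{},\f{}}^{-1}|b_{\e{},\f{}}|^2\le \C{1}^2\,h_{\e{}}\,|\bkap^{\ohalf}\grdh{v}|_{\e{}}|^2 .
\]
The diffusivity $\kappa_{\e{},\f{}}$ cancels exactly, which is the source of contrast-independence. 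Summing over all pairs $(\e{},\f{})$ gives
\[
\sum_{\e{}\in\Th}\sum_{\f{}\in\FE}\norm{\tau_{\e{},\f{}}^{-\ohalf}b_{\e{},\f{}}}{0}{\f{}}^2\le \C{1}^2\seminorm{v}{1}{h}^2 .
\]
The left-hand side ranges over the multiset $\bnd{\Th}$, so each interior face contributes once from each side, which matches the definition of $\seminorm{\cdot}{1}{h}$.

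\textbf{Bound (i).} Fix $\f{}\in\Fhi$. Only normal components contribute, so
\[
\bigl|\cwmean{\bkap\grdh{v}}{\vomega}\cdot\jump{\dw}\bigr|\le(\omega_2|b_1|+\omega_1|b_2|)\,|\jump{\dw}|,\qquad b_i\eqbydef b_{\e{i},\f{}}.
\]
I write $\omega_2|b_1|+\omega_1|b_2|=\varrho_0^{\ohalf}\sum_i c_i\,\tau_i^{-\ohalf}|b_i|$ with $c_1=\omega_2\tau_1^{\ohalf}\varrho_0^{-\ohalf}$ and $c_2=\omega_1\tau_2^{\ohalf}\varrho_0^{-\ohalf}$, and apply Cauchy--Schwarz in $\R^2$. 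The key algebraic check uses $\omega^\tau_i=\tau_i/(\tau_1+\tau_2)$ and $\varrho_0=\tau_1\tau_2/(\tau_1+\tau_2)$:
\[
c_1^2+c_2^2=\frac{\tau_2^2\tau_1+\tau_1^2\tau_2}{(\tau_1+\tau_2)^2}\cdot\frac{\tau_1+\tau_2}{\tau_1\tau_2}=1,
\]
so no extra constant appears. This gives, pointwise on $\f{}$,
\[
(\omega_2|b_1|+\omega_1|b_2|)\,|\jump{\dw}|\le \varrho_0^{\ohalf}|\jump{\dw}|\,\Bigl(\sum_i\tau_i^{-1}|b_i|^2\Bigr)^{\ohalf}.
\]
On a boundary face $\varrho_0=\tau_{\e{},\f{}}$ and the weighted mean is the one-sided trace, so the same bound holds trivially. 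Integrating over each face, applying Cauchy--Schwarz over $\Fh$, and using the pointwise scaling step yields \eqref{eq:extended-weighted-consistency}.

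\textbf{Bound (ii).} On $\f{}\in\Fhi$, $|\jump{\bkap\grdh{v}}|\le|b_1|+|b_2|$. Cauchy--Schwarz gives
\[
\varrho_1(|b_1|+|b_2|)^2\le\varrho_1(\tau_1+\tau_2)\bigl(\tau_1^{-1}|b_1|^2+\tau_2^{-1}|b_2|^2\bigr)=\tau_1^{-1}|b_1|^2+\tau_2^{-1}|b_2|^2,
\]
since $\varrho_1=1/(\tau_1+\tau_2)$. Summing over faces and invoking the pointwise scaling step gives \eqref{eq:extended-flux-jump}.

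The main point requiring care is the exact identity $c_1^2+c_2^2=1$ (and its analogue $\varrho_1(\tau_1+\tau_2)=1$). A naive triangle inequality followed by $(a+b)^2\le2(a^2+b^2)$ would only yield $\sqrt2\,\C{1}$. One must also check that the face-to-element bookkeeping matches the multiset $\bnd{\Th}$ used in $\seminorm{\cdot}{1}{h}$.
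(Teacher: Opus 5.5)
Your proof is correct and is essentially the paper's argument: a face-by-face weighted Cauchy--Schwarz step, where your normalization $c_1^2+c_2^2=1$ is exactly the paper's structural identity $\omega_2^2\tau_1+\omega_1^2\tau_2=\varrho_0$. Part (ii) uses the same identity $\varrho_1(\tau_1+\tau_2)=1$, and the normal diffusivity in $\tau_i$ cancels to give the factor $\C{1}^2 h_i$. Your write-up is also slightly more careful than the paper's, which drops the squares in the weighted Cauchy--Schwarz step of (ii) and writes $\C{1}$ where $\C{1}^2$ is meant before taking square roots.
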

\begin{proof}[Proof of (i)]
We argue face by face. For $\f{}\in\Fhi$ shared by $\e{1}$ and $\e{2}$, set
$q_i\eqbydef\bkap_i\grdh{v_i}\cdot\vect{n}_i$, so that
$(\cwmean{\bkap\grdh{v}}{\vomega})\cdot\vect{n}_1=\omega_2 q_1-\omega_1 q_2$.
By Cauchy--Schwarz and the structural identity $\omega_2^2\tau_1+\omega_1^2\tau_2=\varrho_0$,
\[
\abs{T_{\f{}}}
\le
\Bigl(\tau_1^{-1}\norm{q_1}{0}{\f{}}^2+\tau_2^{-1}\norm{q_2}{0}{\f{}}^2\Bigr)^{\ohalf}
\norm{\varrho_0^{\ohalf}\jump{\dw}}{0}{\f{}}.
\]
Since $\norm{q_i}{0}{\f{}}^2\le\kappa_i\norm{\bkap_i^{\ohalf}\grdh{v_i}}{0}{\f{}}^2$
and $\tau_i=\alpha_0\C{T}^2\kappa_i h_i^{-1}$, one has
$\tau_i^{-1}\norm{q_i}{0}{\f{}}^2\le\C{1}^2 h_i\norm{\bkap_i^{\ohalf}\grdh{v_i}}{0}{\f{}}^2$
with $\C{1}\eqbydef(\alpha_0\C{T}^2)^{-\ohalf}$.
The same estimate holds on $\f{}\in\Fhb$ with a single term (there
$\cwmean{\bkap\grdh{v}}{\vomega}=\bkap\grdh{v}$ and $\varrho_0=\tau$).
Summing over $\Fh$ and applying Cauchy--Schwarz yields~\eqref{eq:extended-weighted-consistency}.
\end{proof}
\begin{proof}[Proof of (ii)]
Let $\f{}\in\Fhi$ be shared by $\e{1}$ and $\e{2}$, and set $q_i\eqbydef \bkap_i\grdh{v_i}\cdot\vect n_i$ for $i\in\{1,2\}$. Using the weighted Cauchy--Schwarz inequality gives
\[
\norm{q_1+q_2}{0}{\f{}}^2
\le
(\tau_1+\tau_2)\bigl(\tau_1^{-1}\norm{q_1}{0}{\f{}}+\tau_2^{-1}\norm{q_2}{0}{\f{}}\bigr).
\]
Since $\varrho_1=(\tau_1+\tau_2)^{-1}$ on $\Fhi$, we infer
$\varrho_1\,\norm{q_1+q_2}{0}{\f{}}^2\le
\tau_1^{-1}\norm{q_1}{0}{\f{}}^2+\tau_2^{-1}\norm{q_2}{0}{\f{}}^2
$. Therefore,
\[
\norm{\varrho_1^{\ohalf}\jump{\bkap\grdh{v}}}{0}{\f{}}^2
\le
\tau_1^{-1}\norm{q_1}{0}{\f{}}^2+\tau_2^{-1}\norm{q_2}{0}{\f{}}^2.
\]
Proceeding exactly as above, we infer
\[
\norm{\varrho_1^{\ohalf}\jump{\bkap\grdh{v}}}{0}{\f{}}^2
\le
\C{1}
\Bigl(
h_1\norm{\bkap_1^{\ohalf}\grdh{v_1}}{0}{\f{}}^2
+
h_2\norm{\bkap_2^{\ohalf}\grdh{v_2}}{0}{\f{}}^2
\Bigr),
\]
where $\C{1}\eqbydef(\alpha_0 C_T^2)^{-\ohalf}$. Summing over $\Fhi$ yields \eqref{eq:extended-flux-jump}.
\end{proof}

\begin{corollary}[Discrete interface bounds]
\label{cor:discrete-interface-bounds}
Let $\C{0}\eqbydef(\eta_0/\alpha_0)^{\ohalf}>0$, independent of $h$ and the diffusion contrast. For all $\dv\in\Vh$,
\begin{subequations}
\label{eq:discrete-interface-bounds}
\begin{align}
\label{eq:discrete-weighted-consistency}
\abs{\dualprod{\cwmean{\bkap\grdh{\dv}}{\vomega}}
{\jump{\dv}}{0}{\Fh}}
&\leq
\C{0}\norm{\bkap^{\ohalf}\grdh{\dv}}{0}{\Th}\,\seminorm{\dv}{0}{h},
\\
\label{eq:discrete-flux-jump}
\norm{\varrho_1^{\ohalf}\jump{\bkap\grdh{\dv}}}{0}{\Fhi}
&\leq
\C{0}
\norm{\bkap^{\ohalf}\grdh{\dv}}{0}{\Th}.
\end{align}
\end{subequations}
These follow from \cref{lem:extended-interface-bounds} with $v=\dv$ and~\eqref{eq:discrete-trace-control}, using $\C{0}=\C{dt}\C{1}$.
\end{corollary}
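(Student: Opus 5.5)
The corollary is stated as a direct consequence of \cref{lem:extended-interface-bounds}. The plan is to specialize that lemma to discrete arguments and then trade the seminorm $\seminorm{\cdot}{1}{h}$ for the broken energy $\norm{\bkap^{\ohalf}\grdh{\cdot}}{0}{\Th}$ using the discrete trace control \eqref{eq:discrete-trace-control}.

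\textbf{Weighted consistency bound.} Take $v=\dw=\dv\in\Vh\subset V(h)$ in \eqref{eq:extended-weighted-consistency}. This gives
\[
\abs{\dualprod{\cwmean{\bkap\grdh{\dv}}{\vomega}}{\jump{\dv}}{0}{\Fh}}\le\C{1}\seminorm{\dv}{1}{h}\seminorm{\dv}{0}{h}.
\]
Then insert $\seminorm{\dv}{1}{h}\le\C{dt}\norm{\bkap^{\ohalf}\grdh{\dv}}{0}{\Th}$ from \eqref{eq:discrete-trace-control}. This yields \eqref{eq:discrete-weighted-consistency} with constant $\C{dt}\C{1}$.

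\textbf{Flux-jump bound.} Similarly, apply \eqref{eq:extended-flux-jump} with $v=\dv$ and then use \eqref{eq:discrete-trace-control} to obtain \eqref{eq:discrete-flux-jump}.

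\textbf{Constant.} The remaining step is bookkeeping:
\[
\C{dt}\C{1}=\eta_0^{\ohalf}\C{T}\,(\alpha_0\C{T}^2)^{-\ohalf}=(\eta_0/\alpha_0)^{\ohalf}=\C{0}.
\]
The $\C{T}$ factors cancel, so $\C{0}$ depends only on $\eta_0$ and $\alpha_0$. It is therefore independent of $h$ and of the contrast.

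\textbf{Checks and main obstacle.} The only delicate point is checking that \eqref{eq:discrete-trace-control} really holds with $\C{dt}=\eta_0^{\ohalf}\C{T}$. To confirm this, I would apply \eqref{discret_trace_ineq} face by face to the polynomial components of $\bkap_{\e{}}^{\ohalf}\grdh{\dv}|_{\e{}}\in[\Pk{k-1}{\e{}}]^d$, which is legitimate because $\bkap$ is piecewise constant. Multiplying by $h_{\e{}}$ and summing over the at most $\eta_0$ faces of each element gives
\[
h_{\e{}}\norm{\bkap^{\ohalf}\grdh{\dv}}{0}{\FE}^2\le\eta_0\C{T}^2\norm{\bkap^{\ohalf}\grdh{\dv}}{0}{\e{}}^2.
\]
Summing over $\Th$ then gives the claim.

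I would also verify that squared versus unsquared constants are handled consistently. In part (ii) of the lemma, the face estimate carries $\C{1}^2$, which after taking square roots gives $\C{1}$, consistent with \eqref{eq:extended-flux-jump}. This confirms the final constant is $\C{0}$ and not a power of it.
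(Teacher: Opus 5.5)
Your proposal is correct and follows the paper's own argument: you specialize both parts of \cref{lem:extended-interface-bounds} to $v=\dw=\dv$, then replace $\seminorm{\dv}{1}{h}$ using \eqref{eq:discrete-trace-control}, which gives $\C{dt}\C{1}=\eta_0^{1/2}\C{T}(\alpha_0\C{T}^2)^{-1/2}=(\eta_0/\alpha_0)^{1/2}=\C{0}$. Your extra verification of \eqref{eq:discrete-trace-control} is correct; it applies the discrete trace inequality componentwise to the elementwise-polynomial field $\bkap^{1/2}\grdh{\dv}$ (valid since $\bkap$ is piecewise constant) and sums over at most $\eta_0$ faces. So is your remark that the face-wise estimate in part (ii) should carry $\C{1}^2$, not the $\C{1}$ displayed in the paper.
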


\subsection{Stability analysis}
\label{S:4.2}

\begin{theorem}[Discrete coercivity]
\label{thm:coercivity}
Let $\epsilon\in\{0,\pm1\}$. Assume that the structural constant $\C{0}$ introduced in \cref{cor:discrete-interface-bounds} satisfies $\C{0}^2<\ohalf$ (or equivalently $\alpha_0>2\,\eta_0$). Then there exists a constant $\C{sta}>0$, depending only on $\C{0}$,
but independent of $h$, the diffusion contrast, and the discrete
trace constant $\C{T}$, such that
\begin{equation}
\label{eq:coercivity}
a_h^{(\epsilon)}(\dv,\dv)\ge \C{sta}\,\tnorm{\dv}^2
\qquad\forall\,\dv\in\Vh.
\end{equation}
\end{theorem}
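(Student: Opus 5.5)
We set $\dw=\dv$ in the UIP-DG bilinear form of \cref{thm_UIP_method}, which gives
\[
a_h^{(\epsilon)}(\dv,\dv)
=\norm{\bkap^{\ohalf}\grdh{\dv}}{0}{\Th}^2
+(1+\epsilon)\,a_h^{(c)}(\dv,\dv)
+\seminorm{\dv}{0}{h}^2
-\epsilon\,\norm{\varrho_1^{\ohalf}\jump{\bkap\grdh{\dv}}}{0}{\Fhi}^2 .
\]
We abbreviate $X\eqbydef\norm{\bkap^{\ohalf}\grdh{\dv}}{0}{\Th}$ and $Y\eqbydef\seminorm{\dv}{0}{h}$. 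The consistency term is controlled by \eqref{eq:discrete-weighted-consistency}:
\[
\abs{a_h^{(c)}(\dv,\dv)}\le \C{0}\,X Y .
\]
The flux-jump term is controlled by \eqref{eq:discrete-flux-jump}:
\[
\norm{\varrho_1^{\ohalf}\jump{\bkap\grdh{\dv}}}{0}{\Fhi}^2\le\C{0}^2X^2 .
\]
Both constants are independent of $h$ and of the contrast, so the proof reduces to a case analysis on $\epsilon$ followed by Young's inequality.

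\textbf{Case $\epsilon=-1$.} The consistency term cancels, and the flux-jump term enters with a plus sign. We obtain $a_h\ge X^2+Y^2=\tnorm{\dv}^2$ directly, so $\C{sta}=1$.

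\textbf{Case $\epsilon=0$.} The flux-jump term is absent. We get $a_h\ge X^2-\C{0}XY+Y^2$. Since $\C{0}<1/\sqrt2<2$, this quadratic form is positive definite. Its smallest eigenvalue is $1-\C{0}/2$, which gives $\C{sta}=1-\C{0}/2$.

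\textbf{Case $\epsilon=1$.} This is the delicate case: the consistency term carries the factor $2$, and the flux-jump term is subtracted. We obtain
\[
a_h\ge (1-\C{0}^2)X^2-2\C{0}XY+Y^2 .
\]
The quadratic form $\bigl(\begin{smallmatrix}1-\C{0}^2&-\C{0}\\-\C{0}&1\end{smallmatrix}\bigr)$ has determinant $1-2\C{0}^2$. This is positive exactly when $\C{0}^2<\ohalf$, which is the stated hypothesis. The flux-jump and consistency losses therefore combine so that the hypothesis is sharp for this argument.

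For an explicit constant in this case, we apply Young's inequality $2\C{0}XY\le \delta\C{0}^2X^2+\delta^{-1}Y^2$ with some $\delta>1$. This yields
\[
a_h\ge \bigl(1-(1+\delta)\C{0}^2\bigr)X^2+(1-\delta^{-1})Y^2 .
\]
Both coefficients are positive for $1<\delta<\C{0}^{-2}-1$, an interval that is nonempty because $\C{0}^2<\ohalf$. Alternatively, one can take $\C{sta}$ as the smallest eigenvalue,
\[
\tfrac12\Bigl(2-\C{0}^2-\sqrt{\C{0}^4+4\C{0}^2}\Bigr).
\]

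Finally, we take $\C{sta}$ as the minimum over the three cases. It depends only on $\C{0}$, and hence only on $\eta_0/\alpha_0$, and not on $h$, the contrast, or $\C{T}$. The only genuine obstacle is the symmetric case. There we must check that \eqref{eq:discrete-flux-jump} applies with the same $\C{0}$ as the consistency bound, and that the factor in Lemma~\ref{lem:extended-interface-bounds}(ii) is indeed $\C{1}^2$, as the squared form of the bound requires.
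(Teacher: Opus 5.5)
Your proof is correct and is essentially the paper's argument. It uses the same two bounds from \cref{cor:discrete-interface-bounds} together with Young's inequality, and your $2C_0XY\le\delta C_0^2X^2+\delta^{-1}Y^2$ is exactly the paper's choice $\theta=\delta C_0^2$ with $C_0^2<\theta<1-C_0^2$. The only difference is that the paper handles all $\epsilon$ at once via $|\epsilon\,\mathfrak{T}_3|\le|\epsilon|C_0^2X^2$ and $(1+\epsilon)^2\le 4$, so that $\epsilon=1$ is the worst case, whereas your case split uses the favorable sign for $\epsilon=-1$ and gives sharper constants for $\epsilon\in\{0,-1\}$. The check you flag at the end goes through: the squared intermediate display in the proof of \cref{lem:extended-interface-bounds}(ii) should carry $C_1^2$ (a typo), so the stated bound with $C_1$, and hence \eqref{eq:discrete-flux-jump} with $C_0$, is valid.
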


\begin{proof}
Setting $\du=\dv$ in \eqref{UIP_DG_formulation}, we write
\[
a_h^{(\epsilon)}(\dv,\dv)=\mathfrak{T}_1-(1+\epsilon)\mathfrak{T}_2-\epsilon \mathfrak{T}_3,
\]
with
\begin{align*}
\mathfrak{T}_1 &\eqbydef
\norm{\bkap^{\ohalf}\grdh{\dv}}{0}{\Th}^2
+\norm{\varrho_0^{\ohalf}\jump{\dv}}{0}{\Fh}^2,\\
\mathfrak{T}_2 &\eqbydef
\dualprod{\cwmean{\bkap\grdh{\dv}}{\vomega}}{\jump{\dv}}{0}{\Fh},\\
\mathfrak{T}_3 &\eqbydef
\norm{\varrho_1^{\ohalf}\jump{\bkap\grdh{\dv}}}{0}{\Fhi}^2.
\end{align*}
Setting $X\eqbydef \norm{\bkap^{\ohalf}\grdh{\dv}}{0}{\Th}$ and $Y\eqbydef \seminorm{\dv}{0}{h}$,
we have $\mathfrak{T}_1=X^2+Y^2$.
By \eqref{eq:discrete-weighted-consistency}, $|\mathfrak{T}_2|\le\C{0}\,XY$, so Young's inequality
with parameter $\theta>0$ gives $|(1+\epsilon)\mathfrak{T}_2|\le\theta X^2+\frac{(1+\epsilon)^2\C{0}^2}{4\theta}Y^2$.
By \eqref{eq:discrete-flux-jump}, $|\epsilon\mathfrak{T}_3|\le|\epsilon|\C{0}^2 X^2$.
Combining these three bounds,
\[
a_h^{(\epsilon)}(\dv,\dv)
\ge
\bigl(1-\theta-\abs{\epsilon}\C{0}^2\bigr)X^2
+\left(1-\frac{(1+\epsilon)^2\C{0}^2}{4\theta}\right)Y^2.
\]
Since $|\epsilon|\le 1$ and $(1+\epsilon)^2\le 4$ for all $\epsilon\in\{0,\pm1\}$,
the most restrictive case is $\epsilon=1$. Choosing $\theta$ such that
\[
\C{0}^2<\theta<1-\C{0}^2,
\]
both coefficients of $X^2$ and $Y^2$ are positive. Such a choice is
possible if and only if $\C{0}^2<\frac12$ or equivalently $\alpha_0>2\eta_0$. Under this assumption, there exists $\C{sta}>0$, depending only on $\C{0}$, such that for all $\epsilon\in\{0,\pm1\}$,
\[
a_h^{(\epsilon)}(\dv,\dv)
\ge
\C{sta}\bigl(X^2+Y^2\bigr)
=
\C{sta}\,\tnorm{\dv}^2.
\]
This concludes the proof.
\end{proof}

\subsection{Consistency and boundedness}
\label{S:4.3}

\begin{lemma}[Consistency and Galerkin orthogonality]
\label{lem:consistency}
Let $u\in V\cap\hilbert{2}{\Th}{}$ be the solution
of~\eqref{weak_model_problem}. Then
\begin{equation}
  \label{eq:consistency}
  a_h^{(\epsilon)}(u,\dv)=\inerprod{f}{\dv}{0}{\Th}
  \qquad\forall\,\dv\in\Vh.
\end{equation}
Consequently, if $\du\in\Vh$ is the solution of~\eqref{UIP_DG_formulation},
then
\begin{equation}
  \label{eq:galerkin_orth}
  a_h^{(\epsilon)}(u-\du,\dv)=0
  \qquad\forall\,\dv\in\Vh.
\end{equation}
\end{lemma}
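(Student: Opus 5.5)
We prove \eqref{eq:consistency} directly by evaluating each term of $a_h^{(\epsilon)}(u,\dv)$ for the exact solution $u\in V\cap\hilbert{2}{\Th}{}$. First we record the regularity facts. Since $u\in\hilbert{1}{\domain}{0}$, its traces are single-valued on interior faces and vanish on $\bnddom$. Hence $\jump{u}=0$ on $\Fh$. Moreover, $\bkap\grd{u}$ has weak divergence $-f\in\lebesgue{2}{\domain}$, because $\dvg(\bkap\grd u)=-f$ in $\domain$. Since $\bkap$ is piecewise constant and $u\in\hilbert{2}{\Th}{}$, each elementwise flux $\bkap_{\e{}}\grd{u}|_{\e{}}$ lies in $[\hilbert{1}{\e{}}{}]^d$. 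The global field $\bkap\grd u\in\vect{H}(\mathrm{div};\domain)$ therefore has continuous normal component across interior faces, that is, $\jump{\bkap\grdh{u}}=0$ on $\Fhi$ in $\lebesgue{2}{\f{}}$.

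With these facts, the symmetry term vanishes: $a_h^{(c)}(\dv,u)=-\dualprod{\cwmean{\bkap\grdh{\dv}}{\vomega}}{\jump{u}}{0}{\Fh}=0$. In the stabilization $s_h^{(\epsilon)}(u,\dv)$, both the primal-jump penalty (through $\jump{u}=0$) and the flux-jump penalty (through $\jump{\bkap\grdh u}=0$) vanish. So $a_h^{(\epsilon)}(u,\dv)$ reduces to
\[
\inerprod{\bkap\grdh{u}}{\grdh{\dv}}{0}{\Th}-\dualprod{\cwmean{\bkap\grdh{u}}{\vomega}}{\jump{\dv}}{0}{\Fh}.
\]
Because the normal flux is continuous, the conjugate weighted mean of $\bkap\grdh u$ dotted with $\jump{\dv}$ equals the one-sided value. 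This can be read off from \cref{lemma:w_invariance}, whose $\vect\gamma$-correction is multiplied by $\jump{\bkap\grdh u}=0$, or seen directly from $\omega_1+\omega_2=1$. Equivalently, we may apply \cref{lemma:identity_rel} with $\vect b=\bkap\grdh u$ and $\boldsymbol\varphi=(\dv,0)$. This gives
\[
\dualprod{\cwmean{\bkap\grdh{u}}{\vomega}}{\jump{\dv}}{0}{\Fh}=\sumT\dualprod{\bkap\grd u\cdot\vect n_{\e{}}}{\dv}{0}{\bnd\e{}}-\dualprod{\jump{\bkap\grdh u}}{\wmean{\dv}{\vomega}}{0}{\Fhi},
\]
and the last term is zero.

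Next we integrate by parts elementwise: $\inerprod{\bkap\grd u}{\grd\dv}{0}{\e{}}=-\inerprod{\dvg(\bkap\grd u)}{\dv}{0}{\e{}}+\dualprod{\bkap\grd u\cdot\vect n_{\e{}}}{\dv}{0}{\bnd\e{}}$. Summing over $\Th$, the boundary terms cancel exactly against the consistency contribution. What remains is $\inerprod{f}{\dv}{0}{\Th}$, which proves \eqref{eq:consistency}. Subtracting \eqref{UIP_DG_formulation} from \eqref{eq:consistency} and using bilinearity of $a_h^{(\epsilon)}$ on $V(h)$ gives \eqref{eq:galerkin_orth}.

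The main obstacle is justifying that all face terms are well defined and that $\jump{\bkap\grdh u}=0$ holds in $\lebesgue{2}{\f{}}$, rather than only in $H^{-1/2}$. We handle this through the $\hilbert{2}{\Th}{}$ regularity: traces of $\grd u|_{\e{}}$ lie in $\lebesgue{2}{\f{}}$. The normal-flux continuity then follows by testing the weak divergence identity with $\varphi\in C_c^\infty(\domain)$ and integrating by parts elementwise, which gives $\sumFi\int_{\f{}}\jump{\bkap\grdh u}\varphi=0$ for all such $\varphi$, hence $\jump{\bkap\grdh u}=0$ face by face. Piecewise constancy of $\bkap$ is what allows the H-IP-inherited weights to be applied to $u$ without extra regularity assumptions.
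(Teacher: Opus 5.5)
Your proof is correct and follows the same route as the paper. The vanishing of $\jump{u}$ and of the normal-flux jump removes the symmetry term and both stabilization terms, the conjugate weighted mean collapses to the common normal flux, and elementwise integration by parts yields $(f,\dv)$. Your additional steps make explicit what the paper leaves implicit: the $L^2(F)$ justification of $\jump{\bkap\grdh{u}}=0$ by testing with $C_c^\infty$ functions, and the rewriting of the consistency term via \cref{lemma:identity_rel} with $\boldsymbol\varphi=(\dv,0)$.
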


\begin{proof}
Since $u\in V=H^1_0(\Omega)$, we have $\jump{u}=\vect{0}$ on $\Fh$ and, as $u$
solves~\eqref{model_problem_strong}, $\jump{\bkap\grd{u}}=0$ on $\Fhi$.
All stabilization terms therefore vanish, and $\cwmean{\bkap\grd{u}}{\vomega}$
reduces to the common normal flux on each $\f{}\in\Fhi$. An elementwise
integration by parts then gives
\begin{align*}
a_h^{(\epsilon)}(u,\dv)
&=
\sumT
\left[
\inerprod{\bkap\grd{u}}{\grdh{\dv}}{0}{\e{}}
-
\dualprod{(\bkap\grd{u})\cdot\vect{n}_{\bnd{\e{}}}}{\dv}{0}{\FE}
\right]\\
&=
\sumT \inerprod{-\dvg(\bkap\grd{u})}{\dv}{0}{\e{}}=\inerprod{f}{\dv}{0}{\Th},
\end{align*}
which proves~\eqref{eq:consistency}. Subtracting~\eqref{UIP_DG_formulation}
from~\eqref{eq:consistency} yields~\eqref{eq:galerkin_orth}.
\end{proof}

\begin{lemma}[Boundedness]
\label{lem:boundedness}
Let $\epsilon\in\{0,\pm1\}$. There exists a constant $\C{bnd}>0$, depending only on $\alpha_0$,
$\eta_0$, and the mesh shape-regularity, but independent of $h$ and of
the diffusion contrast, such that
\begin{equation}
\label{eq:boundedness}
\abs{a_h^{(\epsilon)}(v,\dw)}
\le
\C{bnd}\,\tnorm{v}_{*}\tnorm{\dw}
\qquad
\forall\,(v,\dw)\in V(h)\times\Vh.
\end{equation}
\end{lemma}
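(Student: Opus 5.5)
The plan is to split $a_h^{(\epsilon)}(v,\dw)$ into its five constituent pieces, following the decomposition of \cref{thm_UIP_method}, and to bound each one separately by a product of a seminorm of $v$ and a seminorm of $\dw$. These seminorms are all controlled by $\tnorm{v}_*$ and $\tnorm{\dw}$. The pieces are
\begin{align*}
a_h^{(\epsilon)}(v,\dw)
&=\inerprod{\bkap\grdh{v}}{\grdh{\dw}}{0}{\Th}
+a_h^{(c)}(v,\dw)+\epsilon\,a_h^{(c)}(\dw,v)\\
&\quad+\dualprod{\varrho_0\jump{v}}{\jump{\dw}}{0}{\Fh}
-\epsilon\dualprod{\varrho_1\jump{\bkap\grdh{v}}}{\jump{\bkap\grdh{\dw}}}{0}{\Fhi}.
\end{align*}

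\textbf{Volume and primal-jump penalty.} These two terms are immediate. Cauchy--Schwarz applied with the weights $\bkap^{\ohalf}$ and $\varrho_0^{\ohalf}$ bounds them by $\norm{\bkap^{\ohalf}\grdh{v}}{0}{\Th}\norm{\bkap^{\ohalf}\grdh{\dw}}{0}{\Th}+\seminorm{v}{0}{h}\seminorm{\dw}{0}{h}\le\tnorm{v}\,\tnorm{\dw}$.

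\textbf{Consistency term $a_h^{(c)}(v,\dw)$.} The gradient falls on $v\in V(h)$ and the jump on $\dw\in\Vh$. This is exactly the setting of \cref{lem:extended-interface-bounds}(i), which gives the bound $\C{1}\seminorm{v}{1}{h}\seminorm{\dw}{0}{h}$.

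\textbf{Flux-jump penalty.} Cauchy--Schwarz reduces this term to $\norm{\varrho_1^{\ohalf}\jump{\bkap\grdh{v}}}{0}{\Fhi}\,\norm{\varrho_1^{\ohalf}\jump{\bkap\grdh{\dw}}}{0}{\Fhi}$. The first factor is bounded by $\C{1}\seminorm{v}{1}{h}$ using \cref{lem:extended-interface-bounds}(ii). The second is bounded by $\C{0}\norm{\bkap^{\ohalf}\grdh{\dw}}{0}{\Th}$ using \eqref{eq:discrete-flux-jump}.

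\textbf{Adjoint term $\epsilon\,a_h^{(c)}(\dw,v)$.} I expect this to be the main obstacle. Here the roles are reversed: the weighted flux average acts on the discrete function $\dw$, while the jump acts on $v\in V(h)$, which is not polynomial. I would redo the face-by-face argument from the proof of \cref{lem:extended-interface-bounds}(i) with the roles swapped. On each face, the same weighted Cauchy--Schwarz step together with the structural identity $\omega_2^2\tau_1+\omega_1^2\tau_2=\varrho_0$ gives
\[
\abs{\dualprod{\cwmean{\bkap\grdh{\dw}}{\vomega}}{\jump{v}}{0}{\f{}}}
\le\Bigl(\textstyle\sum_i\tau_i^{-1}\norm{q_i}{0}{\f{}}^2\Bigr)^{\ohalf}\norm{\varrho_0^{\ohalf}\jump{v}}{0}{\f{}},
\qquad q_i\eqbydef\bkap_i\grdh{\dw_i}\cdot\vect n_i .
\]
This step only needs $\jump{v}\in\lebesgue{2}{\f{}}$, and that holds for $v\in V(h)$. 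Summing over faces bounds the term by $\C{1}\seminorm{\dw}{1}{h}\seminorm{v}{0}{h}$, and \eqref{eq:discrete-trace-control} then gives $\C{0}\norm{\bkap^{\ohalf}\grdh{\dw}}{0}{\Th}\seminorm{v}{0}{h}$. The point to verify is that the discrete trace inequality is applied only to $\dw$, never to $v$, and that $\tau_i^{-1}\kappa_i h_i^{-1}$ stays contrast-independent. The latter holds because $\tau_i$ is built with the same normal diffusivity $\kappa_i$.

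\textbf{Conclusion.} Collecting the five bounds and applying a discrete Cauchy--Schwarz inequality in $\R^3$ to $(\norm{\bkap^{\ohalf}\grdh{\cdot}}{0}{\Th},\seminorm{\cdot}{0}{h},\seminorm{\cdot}{1}{h})$ gives \eqref{eq:boundedness}. A valid constant is $\C{bnd}=1+\C{1}+\C{0}+\C{0}\C{1}$, up to a factor $\sqrt{3}$. It depends only on $\alpha_0$, $\eta_0$ and $\C{T}$, i.e.\ on the shape-regularity, and not on $h$ or the contrast.
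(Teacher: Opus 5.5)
Your proof is correct and follows essentially the same route as the paper's: the same five-term splitting, with the volume and primal-jump terms handled by Cauchy--Schwarz, the consistency term by \cref{lem:extended-interface-bounds}(i), and the flux-jump term by pairing \cref{lem:extended-interface-bounds}(ii) with \eqref{eq:discrete-flux-jump}; merging the first two terms only improves the paper's constant $2+\C{0}+\C{1}+\C{0}\C{1}$ to $1+\C{0}+\C{1}+\C{0}\C{1}$, and no extra $\sqrt{3}$ factor is needed. For the adjoint term, your face-by-face argument with the roles swapped is exactly what justifies the paper's bound of $\mathfrak{T}_3$ by $\C{0}\norm{\bkap^{\ohalf}\grdh{\dw}}{0}{\Th}\seminorm{v}{0}{h}$, which the paper cites directly from \cref{lem:extended-interface-bounds} even though that lemma puts the gradient on the $V(h)$ argument and the jump on the discrete one.
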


\begin{proof}
For all $(v,\dw)\in V(h)\times\Vh$ and all $\epsilon\in\{0,\pm 1\}$,
\[
  \abs{\dbilin{a^{(\epsilon)}_h}{v}{\dw}}
  \leq
  \abs{\mathfrak{T}_{1}}+\abs{\mathfrak{T}_{2}}+\abs{\mathfrak{T}_{3}}
  +\abs{\mathfrak{T}_{4}}+\abs{\mathfrak{T}_5},
\]
where
\begin{align*}
  \mathfrak{T}_{1}
  &\eqbydef\inerprod{\bkap^{\ohalf}\grdh{v}}{\bkap^{\ohalf}\grdh{\dw}}{0}{\Th},\\
  \mathfrak{T}_{2}
  &\eqbydef\dualprod{\cwmean{\bkap\grdh{v}}{\vomega}}{\jump{\dw}}{0}{\Fh},\\
  \mathfrak{T}_{3}
  &\eqbydef\dualprod{\cwmean{\bkap\grdh{\dw}}{\vomega}}{\jump{v}}{0}{\Fh},\\
  \mathfrak{T}_{4}
  &\eqbydef\dualprod{\varrho_{0}^{\ohalf}\jump{v}}{\varrho_{0}^{\ohalf}\jump{\dw}}{0}{\Fh},\\
  \mathfrak{T}_{5}
  &\eqbydef\dualprod{\varrho_{1}^{\ohalf}\jump{\bkap\grdh{v}}}
  {\varrho_{1}^{\ohalf}\jump{\bkap\grdh{\dw}}}{0}{\Fhi}.
\end{align*}
We bound each term separately. The Cauchy--Schwarz inequality gives directly 
\begin{align*}
  \abs{\mathfrak{T}_1}
  &\leq\norm{\bkap^{\ohalf}\grdh{v}}{0}{\Th}\norm{\bkap^{\ohalf}\grdh{\dw}}{0}{\Th}\leq\tnorm{v}_{*}\tnorm{\dw},\\
  \abs{\mathfrak{T}_{4}}
  &\leq\seminorm{v}{0}{h}\seminorm{\dw}{0}{h}\leq\tnorm{v}_{*}\tnorm{\dw},\\
  \abs{\mathfrak{T}_{5}}
  &\leq(\C{1}\seminorm{v}{1}{h})(\C{0}\norm{\bkap^{\ohalf}\grdh{\dw}}{0}{\Th})
  \leq\C{0}\C{1}\tnorm{v}_{*}\tnorm{\dw}.
\end{align*}
For $\mathfrak{T}_2$ and $\mathfrak{T}_3$, we use directly \cref{lem:extended-interface-bounds}
\begin{align*}
  \abs{\mathfrak{T}_2}
  &\le\C{1}\seminorm{v}{1}{h}\seminorm{\dw}{0}{h}
  \leq\C{1}\tnorm{v}_{*}\tnorm{\dw},\\
  \abs{\mathfrak{T}_{3}}
  &\leq\C{0}\norm{\bkap^{\ohalf}\grdh{\dw}}{0}{\Th}\seminorm{v}{0}{h}
  \leq\C{0}\tnorm{v}_{*}\tnorm{\dw}.
\end{align*}
Collecting the above bounds yields \eqref{eq:boundedness} with
$\C{bnd}\eqbydef 2+\C{0}+\C{1}+\C{0}\C{1}$.
\end{proof}

\begin{remark}[Contrast-independent stability]
\label{rem:contrast_indep}
Under the condition $\alpha_0>2\eta_0$, the coercivity constant $\C{sta}$ depends only on $\alpha_0$ and $\eta_0$, whereas the boundedness constant $\C{bnd}$ depends on $\alpha_0$, $\eta_0$, and the mesh shape-regularity through $\C{1}=(\alpha_0\C{T}^2)^{-1/2}$. Both constants are uniform with respect to the mesh size $h$ and the diffusion contrast $\kappa_{\max}/\kappa_{\min}$. This uniformity is a direct consequence of the transmissibility-based scaling of the penalty parameter $\tau$ in \cref{definition:penalty_parameter}, which incorporates the local diffusivity into the stabilization and yields balanced interface control across heterogeneous media.
\end{remark}

\subsection{A priori error estimates}
\label{S:4.4}

We recall the following standard approximation properties of the elementwise $L^2$-projection $\proj{v}\in\Vh$, defined for each $\e{}\in\Th$ by
\[
\inerprod{\proj{v}}{w_h}{0}{\e{}}
=
\inerprod{v}{w_h}{0}{\e{}}
\qquad\forall w_h\in\Pk{k}{\e{}}.
\]
For all $v\in V\cap\hilbert{s}{\Th}{}$ with $1\leq s\leq k+1$, there exists $\C{}>0$, depending only on the polynomial degree $k$ and the mesh shape-regularity parameter $\eta_0$, such that
\begin{subequations}
  \label{eq:interp_estimates}
  \begin{align}
    \label{eq:interp_L2}
    \norm{v-\proj{v}}{0}{\e{}}
    &\leq
    \C{}\,h_{\e{}}^{s}\,\abs{v}_{s,\e{}},\\
    \label{eq:interp_grad}
    \norm{\grd{(v-\proj{v})}}{0}{\e{}}
    &\leq
    \C{}\,h_{\e{}}^{s-1}\,\abs{v}_{s,\e{}},\\
    \label{eq:interp_trace}
    \norm{v-\proj{v}}{0}{\f{}}
    &\leq
    \C{}\,h_{\e{}}^{s-\ohalf}\,\abs{v}_{s,\e{}},
    \qquad\f{}\in\FE.
  \end{align}
The constant $\C{}$ is independent of $h$ and of $\bkap$.
\end{subequations}
\begin{lemma}[Approximation in the augmented norm]
\label{lem:approx-star}
Let $u\in V\cap\hilbert{\mu}{\Th}{}$ with $2\le \mu\le k+1$, and let $\proj{u}\in\Vh$ denote the elementwise $\LL{2}$-projection of $u$ onto $\Vh$. Then there exists a constant $\C{int}>0$, independent of $h$ and of the diffusion contrast, such that
\begin{equation}
\label{eq:approx-star}
\tnorm{u-\proj{u}}_{*}
\le
\C{int}\norm{\bkap}{\infty}{\domain}^{\ohalf}
h^{\mu-1}\seminorm{u}{\mu}{\Th}.
\end{equation}
\end{lemma}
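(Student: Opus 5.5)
Set $\eta\eqbydef u-\proj{u}$. The plan is to bound each of the three contributions to $\tnorm{\eta}_*^2$ separately, using the elementwise estimates \eqref{eq:interp_estimates} with $s=\mu$. In each case we pull out $\bkap$ only through $\norm{\bkap}{\infty}{\domain}$ or through the ratio defining $\tau$, so that no contrast ratio appears.

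\textbf{Volume term.} Since $\bkap$ is piecewise constant, $\norm{\bkap^{\ohalf}\grdh{\eta}}{0}{\e{}}^2\le\norm{\bkap}{\infty}{\domain}\norm{\grd{\eta}}{0}{\e{}}^2$. Applying \eqref{eq:interp_grad} then gives a bound by $\C{}\norm{\bkap}{\infty}{\domain}h_{\e{}}^{2(\mu-1)}\seminorm{u}{\mu}{\e{}}^2$. Summing over $\e{}$ and using $h_{\e{}}\le h$ handles this term.

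\textbf{Jump seminorm.} Because $u\in V$, we have $\jump{u}=0$, so $\jump{\eta}=-\jump{\proj{u}}$. On each face we bound $\jump{\eta}$ by the one-sided traces of $\eta$. By \eqref{estimates_coef1}, $\varrho_0\le\min(\tau_1,\tau_2)$ on interior faces and $\varrho_0=\tau$ on boundary faces. This gives
\[
\norm{\varrho_0^{\ohalf}\jump{\eta}}{0}{\f{}}^2\le 2\sum_{i}\tau_i\norm{\eta_i}{0}{\f{}}^2 .
\]
Next we insert $\tau_i=\alpha_0\C{T}^2\kappa_{\e{i},\f{}}h_{\e{i}}^{-1}$ together with $\kappa_{\e{i},\f{}}\le\norm{\bkap}{\infty}{\domain}$, and apply \eqref{eq:interp_trace}. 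Each face then contributes at most $\C{}\alpha_0\C{T}^2\norm{\bkap}{\infty}{\domain}h_{\e{i}}^{-1}h_{\e{i}}^{2\mu-1}\seminorm{u}{\mu}{\e{i}}^2$. Summing over faces, each element is counted at most $\eta_0$ times, which yields the order $h^{2(\mu-1)}$. This step is the only place where the min-bound \eqref{estimates_coef1} is essential. Bounding $\varrho_0$ by the larger $\tau$ would also be contrast-free, because we only use $\kappa\le\norm{\bkap}{\infty}{\domain}$.

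\textbf{Trace seminorm $\seminorm{\eta}{1}{h}$.} This is the main obstacle, since $\grd{\eta}$ is not polynomial and the discrete trace inequality cannot be used. Instead we apply the continuous trace inequality \eqref{eq:cont_trace_4.1} componentwise to $\bkap^{\ohalf}\grd{\eta}$, which lies in $H^1(\e{})$ because $\mu\ge2$ and $\bkap_{\e{}}$ is constant:
\[
h_{\e{}}\norm{\bkap^{\ohalf}\grd{\eta}}{0}{\f{}}^2\le \C{tr}\norm{\bkap}{\infty}{\domain}\bigl(\norm{\grd{\eta}}{0}{\e{}}^2+h_{\e{}}^2\seminorm{\eta}{2}{\e{}}^2\bigr).
\]
The first term is controlled by \eqref{eq:interp_grad}. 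For the second we need the standard estimate $\seminorm{u-\proj{u}}{2}{\e{}}\le\C{}h_{\e{}}^{\mu-2}\seminorm{u}{\mu}{\e{}}$. This does not appear among the listed estimates, but it follows from the same Bramble--Hilbert/scaling argument, or from an inverse estimate applied to $\proj{u}-I_hu$ with $I_h$ a local interpolant. We plan to invoke it as a standard approximation property. Summing over the at most $\eta_0$ faces per element gives the bound $\C{}\norm{\bkap}{\infty}{\domain}h^{2(\mu-1)}\seminorm{u}{\mu}{\Th}^2$.

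Adding the three bounds and taking square roots yields \eqref{eq:approx-star}. The constant $\C{int}$ depends on $k$, $\eta_0$, $\alpha_0$, $\C{T}$, $\C{tr}$ and shape-regularity, but not on $h$ or the contrast.
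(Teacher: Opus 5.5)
Your proof is correct and follows the paper's argument essentially step for step. It uses the same three-term split of $\tnorm{u-\proj{u}}_{*}^2$; the jump seminorm is handled, as in the paper, by the bound $\varrho_0\le\min(\tau_1,\tau_2)$ from \eqref{estimates_coef1} combined with \eqref{eq:interp_trace}; and $\seminorm{\cdot}{1}{h}$ is handled by the continuous trace inequality \eqref{eq:cont_trace_4.1} together with the estimate $\seminorm{u-\proj{u}}{2}{\e{}}\le\C{}h_{\e{}}^{\mu-2}\seminorm{u}{\mu}{\e{}}$, which the paper likewise invokes without proof as a standard property. The only quibble is your remark that the min-bound is ``essential'': as your next sentence suggests, $\varrho_0\le\max(\tau_1,\tau_2)$ would also work, at the cost of invoking the comparability $h_{\e{1}}\sim h_{\e{2}}$ of neighbouring elements on shape-regular meshes.
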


\begin{proof}
Set $\erp{u}{\pi}\eqbydef u-\proj{u}$. We bound each component of $\tnorm{\erp{u}{\pi}}_{*}^2 = \norm{\bkap^{\ohalf}\grdh{\erp{u}{\pi}}}{0}{\Th}^2+\seminorm{\erp{u}{\pi}}{0}{h}^2+\seminorm{\erp{u}{\pi}}{1}{h}^2$ separately. By~\eqref{eq:interp_grad} and $\bkap\le\norm{\bkap}{\infty}{\domain}\mathbf{I}$,
\[
\norm{\bkap^{\ohalf}\grdh{\erp{u}{\pi}}}{0}{\Th}^2
\le
\norm{\bkap}{\infty}{\domain}
\sumT \norm{\grdh{\erp{u}{\pi}}}{0}{\e{}}^2
\le
\C{}\norm{\bkap}{\infty}{\domain}
h^{2\mu-2}\seminorm{u}{\mu}{\Th}^2.
\]
Now since $u\in V$ implies $\jump{u}=\vect{0}$ on $\Fh$, one has $\jump{\erp{u}{\pi}}=-\jump{\proj{u}}$.
By the bound $\varrho_0\le\alpha_0\C{T}^2\kappa_{\e{}}\,h_{\e{}}^{-1}$
(from \eqref{estimates_coef1} and \cref{definition:penalty_parameter})
and the trace estimate~\eqref{eq:interp_trace},
\[
\seminorm{\erp{u}{\pi}}{0}{h}^2
\le
\C{}\norm{\bkap}{\infty}{\domain}
\sumT h_{\e{}}^{-1}\sum_{\f{}\in\FE}\norm{\erp{u}{\pi}}{0}{\f{}}^2
\le
\C{}\norm{\bkap}{\infty}{\domain}
h^{2\mu-2}\seminorm{u}{\mu}{\Th}^2.
\]
Hence applying the continuous trace inequality~\eqref{eq:cont_trace_4.1} to $\grdh{\erp{u}{\pi}}$
on each $\e{}\in\Th$ and using~\eqref{eq:interp_grad} together with the property $\seminorm{\grdh{\erp{u}{\pi}}}{1}{\e{}}\le\C{}h_{\e{}}^{\mu-2}\seminorm{u}{\mu}{\e{}}$,
\[
\seminorm{\erp{u}{\pi}}{1}{h}^2
=
\sumT h_{\e{}}\norm{\bkap^{\ohalf}\grdh{\erp{u}{\pi}}}{0}{\FE}^2
\le
\C{}\norm{\bkap}{\infty}{\domain}
h^{2\mu-2}\seminorm{u}{\mu}{\Th}^2.
\]
Summing the three bounds and taking square roots yields~\eqref{eq:approx-star}.
\end{proof}
\begin{theorem}[$\tnorm{\cdot}$-norm error estimate]
\label{thm:nrj-estimate}
Let $u\in V\cap\hilbert{2}{\Th}{}$ be the exact solution
of~\eqref{weak_model_problem} and $\du\in\Vh$ the discrete solution
of~\eqref{UIP_DG_formulation}. Under the assumptions of
\cref{thm:coercivity,lem:boundedness}, there exists $\C{qo}>0$,
depending only on the coercivity and boundedness constants and hence
independent of $h$ and the diffusion contrast, such that
\begin{equation}
  \label{infcond}
  \tnorm{u-\du}
  \leq
  \C{qo}\inf_{\dv\in\Vh}\tnorm{u-\dv}_{*}.
\end{equation}
If, moreover, $u\in\hilbert{\mu}{\Th}{}$ for some $2\le\mu\le k+1$,
then
\begin{equation}
  \label{nrj-estimate}
  \tnorm{u-\du}
  \leq
  \C{err}\norm{\bkap}{\infty}{\domain}^{\ohalf}
  h^{\mu-1}\seminorm{u}{\mu}{\Th},
\end{equation}
where $\C{err}>0$ is independent of $h$ and of the diffusion contrast.
In particular, for $\mu=k+1$, the estimate~\eqref{nrj-estimate} is
quasi-optimal of order $h^k$.
\end{theorem}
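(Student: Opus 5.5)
The argument is the standard Strang/Céa-type one, built from \cref{thm:coercivity}, \cref{lem:consistency} and \cref{lem:boundedness}. Fix an arbitrary $\dv\in\Vh$ and split the error as
\[
u-\du=(u-\dv)+(\dv-\du).
\]
The second piece, $\dv-\du$, lies in $\Vh$. So discrete coercivity \eqref{eq:coercivity} applies to it, and combining it with Galerkin orthogonality \eqref{eq:galerkin_orth} gives
\[
\C{sta}\tnorm{\dv-\du}^2\le a_h^{(\epsilon)}(\dv-\du,\dv-\du)=a_h^{(\epsilon)}(\dv-u,\dv-\du).
\]
For the right-hand side we use boundedness \eqref{eq:boundedness} with first argument $v=\dv-u\in V(h)$ and second argument $\dw=\dv-\du\in\Vh$. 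This yields $\tnorm{\dv-\du}\le(\C{bnd}/\C{sta})\tnorm{u-\dv}_*$. Note that the asymmetric structure of \eqref{eq:boundedness} is exactly what is needed: the nonconforming function sits in the first slot, measured in $\tnorm{\cdot}_*$, while the discrete one sits in the second slot.

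Next we apply the triangle inequality together with $\tnorm{\cdot}\le\tnorm{\cdot}_*$, which holds directly from \eqref{nrj_norm2}. This gives $\tnorm{u-\du}\le(1+\C{bnd}/\C{sta})\tnorm{u-\dv}_*$. Taking the infimum over $\dv$ yields \eqref{infcond} with $\C{qo}=1+\C{bnd}/\C{sta}$. Since both $\C{sta}$ and $\C{bnd}$ are independent of $h$ and of the contrast (\cref{rem:contrast_indep}), so is $\C{qo}$.

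One technical point is that $\tnorm{\cdot}$ and $\tnorm{\cdot}_*$ must make sense on $V(h)$. For $u\in H^2(\Th)$ the traces of $\grdh u$ on faces are well defined, so $\seminorm{\cdot}{1}{h}$ is finite. The consistency lemma requires $u\in V\cap H^2(\Th)$, and this is precisely the regularity assumed in the statement.

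For \eqref{nrj-estimate} we choose $\dv=\proj{u}$ in \eqref{infcond} and invoke \cref{lem:approx-star}. This gives the bound with $\C{err}=\C{qo}\C{int}$, which is again contrast-independent; setting $\mu=k+1$ gives order $h^k$.

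The only delicate step is verifying that boundedness is applied with the arguments in the admissible order, namely $(V(h),\Vh)$, for every $\epsilon$. Since $a_h^{(\epsilon)}(\dv-u,\cdot)$ involves $\jump{\bkap\grdh(\dv-u)}$ and the weighted means of $\bkap\grdh(\dv-u)$, these terms are controlled by $\seminorm{\cdot}{1}{h}$ via \cref{lem:extended-interface-bounds}. That control is already built into \cref{lem:boundedness}, so no further obstacle is expected.
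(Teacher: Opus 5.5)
Your proposal is correct and follows essentially the same route as the paper: fix an arbitrary $\dv$, split the error by the triangle inequality, and control the discrete part through coercivity, Galerkin orthogonality and the asymmetric boundedness estimate to get $\C{qo}=1+\C{bnd}/\C{sta}$; then take $\dv=\proj{u}$ and apply \cref{lem:approx-star}. The only (harmless) difference is that you invoke \cref{lem:approx-star} for general $\mu\in[2,k+1]$, which matches the statement more faithfully than the paper's proof, where the lemma is applied only with $\mu=k+1$.
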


\begin{proof}
Let $\dv\in\Vh$. By the triangle inequality and the definition of the augmented norm,
\[
\tnorm{u-\du}
\le
\tnorm{u-\dv}+\tnorm{\du-\dv}
\le
\tnorm{u-\dv}_{*}+\tnorm{\du-\dv}.
\]
It remains to estimate $\tnorm{\du-\dv}$. By coercivity,
\[
\C{sta}\tnorm{\du-\dv}^2
\le
a_h^{(\epsilon)}(\du-\dv,\du-\dv).
\]
Using the Galerkin orthogonality \eqref{eq:galerkin_orth}, we obtain
\[
a_h^{(\epsilon)}(\du-\dv,\du-\dv)
=
a_h^{(\epsilon)}(u-\dv,\du-\dv).
\]
The boundedness estimate then yields
\[
\C{sta}\tnorm{\du-\dv}^2
\le
\C{bnd}\tnorm{u-\dv}_{*}\tnorm{\du-\dv}.
\]
Hence,
\[
\tnorm{\du-\dv}
\le
\C{sta}^{-1}\C{bnd}\tnorm{u-\dv}_{*}.
\]
Substituting into the triangle inequality gives
\[
\tnorm{u-\du}
\le
\left(1+\C{sta}^{-1}\C{bnd}\right)\tnorm{u-\dv}_{*}.
\]
Since $\dv\in\Vh$ is arbitrary, \eqref{infcond} follows with $\C{qo}\eqbydef 1+\C{sta}^{-1}\C{bnd}$. Choosing now $\dv\eqbydef\proj{u}$ and applying \cref{lem:approx-star} with $\mu=k+1$, we obtain
\[
\tnorm{u-\proj{u}}_{*}
\le
\C{int}\norm{\bkap}{\infty}{\domain}^{\ohalf}
h^k\seminorm{u}{k+1}{\Th}.
\]
Combining this estimate with \eqref{infcond} yields
\[
\tnorm{u-\du}
\le
\C{qo}\tnorm{u-\proj{u}}_{*}
\le
\C{qo}\C{int}\norm{\bkap}{\infty}{\domain}^{\ohalf}
h^k\seminorm{u}{k+1}{\Th},
\]
which is exactly \eqref{nrj-estimate} with $\C{err}\eqbydef\C{qo}\C{int}$.
\end{proof}

\begin{theorem}[$\LL{2}$-error estimates]
\label{thm:L2-estimates}
Under the assumptions of \cref{thm:nrj-estimate}, there exists $\C{}>0$, independent of $h$, such that
\begin{equation}
  \label{L2-estimate}
  \norm{u-\du}{0}{\Th}
  \leq
  \C{}\,h^{s_\epsilon}\,\tnorm{u-\du},
\end{equation}
where $s_\epsilon\eqbydef 1$ for the symmetric scheme (SUIP-DG, $\epsilon=1$) and $s_\epsilon\eqbydef 0$ otherwise (IUIP-DG and NUIP-DG, $\epsilon\in\{0,-1\}$). The optimal estimate ($s_\epsilon=1$) additionally requires elliptic regularity for the dual problem and relies on an Aubin--Nitsche argument. The suboptimal estimate ($s_\epsilon=0$) follows from a broken Poincar\'e inequality.
\end{theorem}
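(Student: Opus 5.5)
I would split the argument according to $s_\epsilon$, starting with the easy suboptimal case ($s_\epsilon=0$). Set $e\eqbydef u-\du\in V(h)$. A broken Poincaré--Friedrichs inequality for piecewise $H^1$ functions gives
\[
\norm{e}{0}{\Th}^2\le \C{}\Bigl(\norm{\grdh{e}}{0}{\Th}^2+\sumF h_{\f{}}^{-1}\norm{\jump{e}}{0}{\f{}}^2\Bigr).
\]
Here $\jump{e}=-\jump{\du}$, because $u\in H^1_0(\Omega)$. Two facts let us compare the right-hand side with $\tnorm{e}^2$: the bound $\kappa_{\min}\le\bkap$, and the lower bound $\varrho_0\ge \shalf\min(\tau_1,\tau_2)\gtrsim \alpha_0\C{T}^2\kappa_{\min}h^{-1}$ that follows from \cref{definition:penalty_parameter}. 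Hence $\norm{e}{0}{\Th}\le \C{}\,\tnorm{e}$. The constant $\C{}$ depends on $\kappa_{\min}^{-1/2}$, which is consistent with the statement: it only claims independence of $h$, not of the contrast. This settles $\epsilon\in\{0,-1\}$ and, trivially, also gives an $h^0$ bound for $\epsilon=1$.

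For the symmetric case $\epsilon=1$ I would use the Aubin--Nitsche duality argument. Let $z\in V$ solve $-\dvg(\bkap\grd z)=e$ in $\domain$ with $z=0$ on $\bnddom$. We assume elliptic regularity $z\in H^2(\Th)$ with $\seminorm{z}{2}{\Th}\le \C{}\norm{e}{0}{\Th}$; here $\C{}$ may depend on $\bkap$. Since $a_h^{(1)}$ is symmetric and $z$ has $\jump{z}=0$ and $\jump{\bkap\grd z}=0$, the same elementwise integration by parts as in \cref{lem:consistency} gives adjoint consistency, $a_h^{(1)}(w,z)=\inerprod{w}{e}{0}{\Th}$ for all $w\in V(h)$. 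I would verify this identity separately for $w=u$ and for $w=\du$; it requires $\cwmean{\bkap\grd z}{\vomega}$ to reduce to the single-valued normal flux, exactly as in that lemma. Taking $w=e$ and using Galerkin orthogonality \eqref{eq:galerkin_orth} together with symmetry, we get
\[
\norm{e}{0}{\Th}^2=a_h^{(1)}(e,z-\proj z).
\]

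To close the argument, I would bound the right-hand side using symmetry, $a_h^{(1)}(e,z-\proj z)=a_h^{(1)}(z-\proj z,e)$, and \cref{lem:boundedness}. This is the main obstacle: \cref{lem:boundedness} only gives $\C{bnd}\tnorm{z-\proj z}_*\tnorm{e}$ when its second argument is discrete, whereas $e=u-\du$ is not. I would first try splitting $e=(u-\proj u)+(\proj u-\du)$. The discrete piece is handled directly by \cref{lem:boundedness}. For the remaining term $a_h^{(1)}(u-\proj u,z-\proj z)$ I would apply the Cauchy--Schwarz estimates of \cref{lem:extended-interface-bounds} with both arguments in $V(h)$, which yields $\C{}\tnorm{u-\proj u}_*\tnorm{z-\proj z}_*$. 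Then \cref{lem:approx-star} with $\mu=2$ gives $\tnorm{z-\proj z}_*\le \C{}h\seminorm{z}{2}{\Th}\le \C{}h\norm{e}{0}{\Th}$.

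After dividing by $\norm{e}{0}{\Th}$, the result is
\[
\norm{e}{0}{\Th}\le \C{}\,h\,\bigl(\tnorm{e}+\tnorm{u-\proj u}_*\bigr).
\]
The extra term $\tnorm{u-\proj u}_*$ has the same order as $\tnorm{e}$ by \cref{thm:nrj-estimate}, so it is absorbed into the right-hand side of \eqref{L2-estimate}. This absorption is precisely where I expect the statement to require interpreting $\tnorm{u-\du}$ via its quasi-optimal bound.
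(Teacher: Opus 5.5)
The $s_\epsilon=0$ case is the paper's argument. One small correction: use the local lower bound $\varrho_0\ge\frac12\min(\tau_1,\tau_2)\gtrsim\alpha_0\C{T}^2\kappa_{\min}h_{\f{}}^{-1}$. The global $h^{-1}$ version is too weak for the $h_{\f{}}^{-1}$-weighted jumps when the mesh is not quasi-uniform.

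For $\epsilon=1$ you follow the paper's Aubin--Nitsche route up to $\norm{e}{0}{\Th}^2=a_h^{(1)}(e,z-\proj{z})$. Your broken-regularity assumption $z\in\hilbert{2}{\Th}{}$ is better suited to discontinuous $\bkap$ than the paper's global $\seminorm{\phi}{2}{\domain}$ bound.

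The obstacle you flag is real, and the paper does not resolve it. It writes $\norm{e}{0}{\Th}^2\le\C{bnd}\tnorm{e}\,\tnorm{\phi-\proj{\phi}}_{*}$, which is \cref{lem:boundedness} applied with the non-discrete $e$ in the slot reserved for $\Vh$. When $e\notin\Vh$, the consistency term $a_h^{(c)}(e,\cdot)$ and the flux-jump term need $\seminorm{e}{1}{h}$. That seminorm is not part of $\tnorm{e}$, so the paper's step is not justified as written.

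Your repair is where the proposal breaks. The splitting $e=(u-\proj{u})+(\proj{u}-\du)$ correctly gives
\[
\norm{u-\du}{0}{\Th}\le\C{}\,h\bigl(\tnorm{u-\du}+\tnorm{u-\proj{u}}_{*}\bigr).
\]
More directly, you get $\norm{u-\du}{0}{\Th}\le\C{}\,h\,\tnorm{u-\du}_{*}$ by noting that both bounds of \cref{lem:extended-interface-bounds} also hold with both arguments in $V(h)$.

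The final absorption, however, uses \cref{thm:nrj-estimate} in the wrong direction. That theorem gives $\tnorm{u-\du}\le\C{qo}\tnorm{u-\proj{u}}_{*}$. Absorbing would require the converse, $\tnorm{u-\proj{u}}_{*}\lesssim\tnorm{u-\du}$, which is an efficiency/saturation-type bound that nothing in the paper provides. In particular, the trace seminorm $\seminorm{u-\proj{u}}{1}{h}$ is controlled only through second derivatives of $u-\proj{u}$, which the energy error need not see. Two quantities sharing the a priori upper bound $\bigO{h^{\mu-1}}$ does not let you replace one by the other inside an inequality.

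So neither your argument nor the paper's proves \eqref{L2-estimate} with the unstarred norm when $\epsilon=1$. What is actually proved is the starred estimate above. Combined with \cref{lem:approx-star} and \cref{thm:nrj-estimate}, it still yields the optimal rate $\norm{u-\du}{0}{\Th}\le\C{}\,h^{\mu}\seminorm{u}{\mu}{\Th}$. The clean fix is to state the symmetric estimate with $\tnorm{u-\du}_{*}$ on the right-hand side, or directly as this $h^{\mu}$ rate.
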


\begin{proof}

\textit{Suboptimal bound ($\epsilon\in\{0,-1\}$).}
Let $e\eqbydef u-\du\in V(h)$. Since $\jump{u}$ is null on $\Fh$, we have
$\jump{e}=-\jump{\du}$ and $\seminorm{e}{0}{h}=\seminorm{\du}{0}{h}$.
We now invoke the broken Poincar\'e--Friedrichs inequality~\cite{Brenner2003PF}:
there exists $\C{P}>0$, depending only on $\domain$ and the mesh
shape-regularity, such that
\[
  \norm{v}{0}{\Th}^2
  \leq
  \C{P}^2
  \left(
    \norm{\grdh{v}}{0}{\Th}^2
    + \sumF h_{\f{}}^{-1}\norm{\jump{v}}{0}{\f{}}^2
  \right),
  \qquad \forall\,v\in V(h).
\]
Using $\norm{\grdh{v}}{0}{\Th}^2\le \kappa_{\min}^{-1}\norm{\bkap^{\ohalf}\grdh{v}}{0}{\Th}^2$ and the lower bound on the penalty parameter
\[
  \varrho_0|_{\f{}}
  \ge \alpha_0 \C{T}^2 \kappa_{\min}\, h_{\f{}}^{-1},
  \qquad \forall \f{}\in\Fh,
\]
we infer that
\[
  \sumF h_{\f{}}^{-1}\norm{\jump{v}}{0}{\f{}}^2
  \le
  (\alpha_0 \C{T}^2 \kappa_{\min})^{-1}
  \seminorm{v}{0}{h}^2.
\]
Hence, the broken Poincar\'e--Friedrichs inequality yields
\[
  \norm{v}{0}{\Th}^2
  \le
  \frac{\C{P}^2}{\kappa_{\min}}
  \left(
    \norm{\bkap^{\ohalf}\grdh{v}}{0}{\Th}^2
    +
    (\alpha_0 \C{T}^2)^{-1}\seminorm{v}{0}{h}^2
  \right).
\]
Using the explicit discrete trace constant on simplices, for which $C_T^2 \ge 1$, and the coercivity condition $\alpha_0>2\eta_0=2(d+1)$, we have $\alpha_0 C_T^2>1$. Hence, the jump seminorm in $\tnorm{\cdot}$ controls the second term on the right-hand side with a coefficient strictly smaller than one. Therefore,
\[
  \norm{v}{0}{\Th}^2
  \le
  \frac{\C{P}^2}{\kappa_{\min}}\,\tnorm{v}^2,
  \qquad \forall v\in V(h).
\]
Applying this estimate to $v=e$ proves \eqref{L2-estimate} with $\C{}=\C{P}\,\kappa_{\min}^{-\ohalf}$ and $s_\epsilon=0$.

\smallskip
\noindent\textit{Optimal bound ($\epsilon=1$, SUIP-DG).}
Let again $e\eqbydef u-\du\in V(h)$ and consider the dual problem: find $\phi\in V$ such that
\begin{equation}
  \label{dual_problem}
  \inerprod{\bkap\grd{\phi}}{\grd{v}}{0}{\domain}
  =
  \inerprod{e}{v}{0}{\domain},
  \qquad\forall v\in V.
\end{equation}
Assume that the dual solution satisfies the elliptic regularity estimate (see, \eg \cite{Brenner2008})
\begin{equation}
  \label{elliptic_reg}
  \seminorm{\phi}{2}{\domain}
  \leq
  \C{reg}\norm{e}{0}{\Th}.
\end{equation}
Let $\proj{\phi}\in\Vh$ denote the elementwise $\LL{2}$-projection of $\phi$ onto $\Vh$. Since the scheme is symmetric for $\epsilon=1$, we have $a_h^{(1)}(e,\phi)=a_h^{(1)}(\phi,e)$. Moreover, by consistency applied to the exact dual solution $\phi$,
\[
a_h^{(1)}(\phi,e)=\inerprod{e}{e}{0}{\Th}=\norm{e}{0}{\Th}^2.
\]
Using the Galerkin orthogonality \eqref{eq:galerkin_orth} with $\dv\eqbydef \proj{\phi}\in\Vh$, we obtain
\[
a_h^{(1)}(e,\proj{\phi})=0,
\]
and hence
\[
\norm{e}{0}{\Th}^2
=
a_h^{(1)}(e,\phi-\proj{\phi}).
\]
Invoking boundedness and the approximation estimate of \cref{lem:approx-star} with $\mu=2$, we infer
\[
\norm{e}{0}{\Th}^2
\leq
\C{bnd}\tnorm{e}\tnorm{\phi-\proj{\phi}}_{*}
\leq
\C{bnd}\C{int}\norm{\bkap}{\infty}{\domain}^{\ohalf}
h\,\tnorm{e}\seminorm{\phi}{2}{\domain}.
\]
Combining this with \eqref{elliptic_reg} and dividing by $\norm{e}{0}{\Th}$ yields \eqref{L2-estimate} with $s_\epsilon=1$ and $\C{}\eqbydef \C{bnd}\C{int}\C{reg}\norm{\bkap}{\infty}{\domain}^{\ohalf}$.
\end{proof}

\section{Numerical experiments}
\label{S:5}

Here we present numerical experiments to validate the theoretical results of \cref{S:4} and assess the practical performance of the UIP-DG method. Two test cases are considered: a heterogeneous anisotropic problem with a smooth manufactured solution (\cref{S:5.1}), and the Kellogg--Rivi\`ere problem with a rough solution (\cref{S:5.2}). We compare the three UIP-DG variants ($\epsilon\in\{0,\pm1\}$) against the Symmetric Weighted IP (SWIP) method of Ern, Stephansen, and Zunino~\cite{ESZIMA08}, which uses diffusivity-based weights $\omega_i^\kappa$ and a single-valued harmonic-mean penalty, and against the single-valued variant IP$_{\mathrm{F}}$-DG of Fabien~\al\cite{FabienNM2019}.

\paragraph{Implementation}
All computations are performed using the finite element library NGSolve \cite{SchoberlNGSolve2014,NGSolveWeb} on sequences of conforming triangular meshes. The stabilization parameter of UIP-DG is set according to \cref{definition:penalty_parameter} with $\alpha_0=8>2(d+1)$, which satisfies the coercivity threshold of \cref{thm:coercivity} for the considered mesh geometries. The polynomial degree ranges from $k=1$ to $k=4$. Errors are measured in the energy norm $\tnorm{\cdot}$ and in the $\LL{2}$-norm $\norm{\cdot}{0}{\Th}$, and estimated convergence rates (ECRs) are computed between consecutive mesh levels. 

\subsection{Test 1 -- Heterogeneous anisotropic diffusion with smooth solution}
\label{S:5.1}

We verify the theoretical convergence rates established in \cref{thm:nrj-estimate,thm:L2-estimates} for all three variants $\epsilon\in\{0,\pm1\}$ of the UIP-DG method, under the high-regularity assumption $u\in\hilbert{k+1}{\domain}{}$. We consider the homogeneous Dirichlet problem on the unit square $\domain\eqbydef(0,1)^2$, with manufactured exact solution
\begin{equation}
\label{exact_sol_test1}
    u(x,y)=\sin(\pi x)\sin(\pi y).
\end{equation}
The domain is partitioned into four non-overlapping subdomains $\domain_i$, $i=1,\ldots,4$, as depicted in \cref{Test-fig-descrip} with anisotropic diffusion tensor $\bkap|_{\domain_i}=\bkap_i$, where
\begin{equation*}
    \bkap_1=\bkap_3=\begin{bmatrix}
    \lambda & 0 \\
    0 & 1
    \end{bmatrix}\quad\textrm{and}\quad\bkap_2=\bkap_4=
     \begin{bmatrix}
    1 & 0 \\
    0 & \lambda^{-1}
    \end{bmatrix}
\end{equation*}
where $\lambda>0$ simultaneously controls the local anisotropy within each subdomain and the global heterogeneity of the medium, with $f=-\dvg(\bkap\grd{u})$ computed accordingly. We consider a sequence of non-uniform triangular meshes conforming to the subdomain partition, so that the discontinuities of $\bkap$ align with element
interfaces.
\begin{figure}[!ht]
    \centering
    \begin{subfigure}
    \centering
    \begin{tikzpicture}[scale=0.5]
	    \draw (0,0) -- (10,0);
    	\draw (10,0) -- (10,10);
    	\draw (10,10) -- (0,10);	
    	\draw (0,10) -- (0,0);
    	
    	\draw[dashed] (0,5) -- (10,5);
    	\draw[dashed] (5,0) -- (5,10);
    	
    	\draw (0,0) node[above right] {$\Omega_1$};
    	\draw (10,0) node[above left] {$\Omega_2$};
    	\draw (10,10) node[below left] {$\Omega_3$};
    	\draw (0,10) node[below right] {$\Omega_4$};
    	
    	\draw[<-> , > = stealth] (1,2.5) -- (4,2.5);
    	\draw[<-> , > = stealth] (2.5,2) -- (2.5,3);
    	\draw (2.5,2.5) node[below right] {$\bkap_{1}$};
    	
    	\draw[<-> , > = stealth] (7,2.5) -- (8,2.5);
    	\draw[<-> , > = stealth] (7.5,1) -- (7.5,4);
    	\draw (7.5,2.5) node[below right] {$\bkap_{2}$};
    	
    	\draw[<-> , > = stealth] (6,7.5) -- (9,7.5);
    	\draw[<-> , > = stealth] (7.5,7) -- (7.5,8);
    	\draw (7.5,7.5) node[below right] {$\bkap_{3}$};
    	
    	\draw[<-> , > = stealth] (2,7.5) -- (3,7.5);
    	\draw[<-> , > = stealth] (2.5,6) -- (2.5,9);
    	\draw (2.5,7.5) node[below right] {$\bkap_{4}$};
    	
        \draw (5,0) node[below] {(a)};
	\end{tikzpicture}\hspace{0.5cm}
    \end{subfigure}
    \begin{subfigure}
    \centering
    \begin{tikzpicture}[scale=0.5]
	    \draw (0,0) -- (10,0);
    	\draw (10,0) -- (10,10);
    	\draw (10,10) -- (0,10);	
    	\draw (0,10) -- (0,0);
    	
    	\draw[dashed] (0,5) -- (10,5);
    	\draw[dashed] (5,0) -- (5,10);
    	
    	\draw (0,0) node[above right] {$\Omega_1$};
    	\draw (10,0) node[above left] {$\Omega_2$};
    	\draw (10,10) node[below left] {$\Omega_3$};
    	\draw (0,10) node[below right] {$\Omega_4$};
    	
    	\draw[<-> , > = stealth] (1,2.5) -- (4,2.5);
        \draw[<-> , > = stealth] (2.5,1) -- (2.5,4);
    	\draw (2.5,2.5) node[below right] {$\bkap_{1}$};
    	
    	\draw[<-> , > = stealth] (7,2.5) -- (8,2.5);
    	\draw[<-> , > = stealth] (7.5,2) -- (7.5,3);
    	\draw (7.5,2.5) node[below right] {$\bkap_{2}$};
    	
    	\draw[<-> , > = stealth] (6,7.5) -- (9,7.5);
    	\draw[<-> , > = stealth] (7.5,6) -- (7.5,9);
    	\draw (7.5,7.5) node[below right] {$\bkap_{3}$};
    	
    	\draw[<-> , > = stealth] (2,7.5) -- (3,7.5);
        \draw[<-> , > = stealth] (2.5,7) -- (2.5,8);
    	\draw (2.5,7.5) node[below right] {$\bkap_{4}$};
    	
        \draw (5,0) node[below] {(b)};
	\end{tikzpicture}\hspace{0.5cm}
    \end{subfigure}
    \caption{Two heterogeneous anisotropic benchmark configurations on the four-subdomain partition $\Omega=\bigcup_{i=1}^4 \Omega_i$. In each subdomain $\Omega_i$, the tensor $\bkap_i$ is represented through its principal directions and relative anisotropy. (a) Test~1: combined anisotropy and heterogeneity with a smooth solution, used to assess optimal convergence. (b) Test~2: heterogeneous configuration with a singular solution exhibiting reduced regularity across interfaces.}
    \label{Test-fig-descrip}
\end{figure}

\paragraph{Convergence rates}
The convergence histories for $\lambda=10^4$ are reported in \cref{convergence1}. For all polynomial degrees $k\in\{1,\ldots,4\}$, UIP-DG achieves the optimal $\tnorm{\cdot}$-norm rate $\bigO{h^k}$ predicted by \cref{thm:nrj-estimate}, uniformly across variants. All three variants exhibit identical asymptotic convergence rates in the energy norm. The parameter $\epsilon$ mainly affects the symmetry of the formulation and the associated constants, but does not alter the convergence orders. For the symmetric variant (SUIP-DG, $\epsilon=1$), the $\LL{2}$-error converges at the optimal rate $\bigO{h^{k+1}}$ in agreement with \cref{thm:L2-estimates}, while IUIP- and NUIP-DG exhibit the expected rate $\bigO{h^k}$. These results hold uniformly for $\lambda\in\{1,10^4\}$, supporting the contrast-robustness of the stability constants established in \cref{rem:contrast_indep}.

\paragraph{Robustness under high contrast}
To assess robustness with respect to diffusion contrast, we fix $h=1/32$, $k=1$, and $\lambda=10^4$, and compare in \cref{Test_comparative} UIP-DG with IP$_{\mathrm{F}}$-DG (for $\epsilon\in\{0,\pm1\}$). The UIP-DG solutions remain stable and free of visible oscillations across material interfaces. In contrast, the IP$_{\mathrm{F}}$-DG solution exhibits oscillations localized near the interface between subdomains with strongly differing diffusion coefficients, whose magnitude increases with $\lambda$. This behavior is consistent with the structure of the interface terms. In IP$_{\mathrm{F}}$-DG, arithmetic averaging and single-valued penalization do not reflect the scaling induced by heterogeneous diffusion, which leads to an imbalance in the discrete flux transmission. In UIP-DG, the transmissibility-based weights $\omega_i^\tau$ and the additional control of flux jumps enforce a consistent scaling of interface contributions, yielding a more robust behavior in high-contrast regimes. 

\begin{figure}[!ht]
\centering
\subfigure{\begin{tikzpicture}[scale=0.47]
\begin{loglogaxis}[title=\normalsize{\textbf{NUIP}}, ylabel=\normalsize{$\tnorm{u-\du}$},xlabel=\normalsize{$h$},legend cell align=left, legend pos = south east, font=\small,line width=1.25pt]

\addplot[color=MidnightBlue,mark=oplus*] coordinates {(1/8,1.9e+01) (1/16,9.5e+00) (1/32,4.8e+00) (1/64,2.4e+00) (1/128,1.2e+00) }; 
		
\addplot[color=ForestGreen,mark=triangle*] coordinates { (1/8,1.9e+00) (1/16,4.9e-01) (1/32,1.2e-01) (1/64,3.2e-02) (1/128,7.9e-03) }; 

\addplot[color=carmine,mark=square*] coordinates {(1/8,1.1e-01) (1/16,1.3e-02) (1/32,1.7e-03) (1/64,2.1e-04) (1/128,2.6e-05) }; 

\addplot[color=burntorange,mark=diamond*] coordinates { (1/8,5.4e-03) (1/16,3.4e-04) (1/32,2.1e-05) (1/64,1.3e-06) (1/128,8.3e-08) }; 


\draw[color=black,dashed] (axis cs:1/64,2.4e+00) |- (axis cs:1/128,1.2e+00) node[near start,right]{$1.0$};

\draw[color=black,dashed] (axis cs:1/64,3.2e-02) |- (axis cs:1/128,7.9e-03) node[near start,right]{$2.0$};

\draw[color=black,dashed] (axis cs:1/64,2.1e-04) |- (axis cs:1/128,2.6e-05) node[near start,right]{$3.0$};

\draw[color=black,dashed] (axis cs:1/64,1.3e-06) |- (axis cs:1/128,8.3e-08) node[near start,right]{$4.0$};

\legend{$k=1$, $k=2$, $k=3$, $k=4$}

\end{loglogaxis}
\end{tikzpicture} }
\subfigure{\begin{tikzpicture}[scale=0.47]
\begin{loglogaxis}[title=\normalsize{\textbf{IUIP}},xlabel=\normalsize{$h$},legend cell align=left, legend pos = south east, font=\small,line width=1.25pt]

\addplot[color=MidnightBlue,mark=oplus*,mark options=solid] coordinates {(1/8,1.9e+01) (1/16,9.5e+00) (1/32,4.8e+00) (1/64,2.4e+00) (1/128,1.2e+00) }; 
		
\addplot[color=ForestGreen,mark=triangle*,mark options=solid] coordinates {(1/8,1.9e+00) (1/16,4.8e-01) (1/32,1.2e-01) (1/64,3.1e-02) (1/128,7.8e-03) }; 

\addplot[color=carmine,mark=square*,mark options=solid] coordinates {(1/8,1.1e-01) (1/16,1.3e-02) (1/32,1.6e-03) (1/64,2.0e-04) (1/128,2.5e-05) }; 

\addplot[color=burntorange,mark=diamond*,mark options=solid] coordinates { (1/8,5.3e-03) (1/16,3.4e-04) (1/32,2.1e-05) (1/64,1.3e-06) (1/128,8.3e-08) }; 


\draw[color=black,dashed] (axis cs:1/64,2.4e+00) |- (axis cs:1/128,1.2e+00) node[near start,right]{$1.0$};

\draw[color=black,dashed] (axis cs:1/64,3.1e-02) |- (axis cs:1/128,7.8e-03) node[near start,right]{$2.0$};

\draw[color=black,dashed] (axis cs:1/64,2.0e-04) |- (axis cs:1/128,2.5e-05) node[near start,right]{$3.0$};

\draw[color=black,dashed] (axis cs:1/64,1.3e-06) |- (axis cs:1/128,8.3e-08) node[near start,right]{$4.0$};

\legend{$k=1$, $k=2$, $k=3$, $k=4$}

\end{loglogaxis}
\end{tikzpicture} }
\subfigure{\begin{tikzpicture}[scale=0.47]
\begin{loglogaxis}[title=\normalsize{\textbf{SUIP}},xlabel=\normalsize{$h$},legend cell align=left, legend pos = south east, font=\small,line width=1.25pt]

\addplot[color=MidnightBlue,mark=oplus*,mark options=solid] coordinates { (1/8,1.9e+01) (1/16,9.8e+00) (1/32,4.9e+00) (1/64,2.5e+00) (1/128,1.2e+00) }; 
		
\addplot[color=ForestGreen,mark=triangle*,mark options=solid] coordinates {(1/8,2.1e+00) (1/16,5.3e-01) (1/32,1.3e-01) (1/64,3.4e-02) (1/128,8.5e-03) }; 

\addplot[color=carmine,mark=square*,mark options=solid] coordinates {(1/8,1.2e-01) (1/16,1.5e-02) (1/32,1.9e-03) (1/64,2.4e-04) (1/128,2.9e-05) }; 

\addplot[color=burntorange,mark=diamond*,mark options=solid] coordinates { (1/8,6.5e-03) (1/16,4.1e-04) (1/32,2.6e-05) (1/64,1.6e-06) (1/128,1.0e-07) }; 


\draw[color=black,dashed] (axis cs:1/64,2.5e+00) |- (axis cs:1/128,1.2e+00) node[near start,right]{$1.0$};

\draw[color=black,dashed] (axis cs:1/64,3.4e-02) |- (axis cs:1/128,8.5e-03) node[near start,right]{$2.0$};

\draw[color=black,dashed] (axis cs:1/64,2.4e-04) |- (axis cs:1/128,2.9e-05) node[near start,right]{$3.0$};

\draw[color=black,dashed] (axis cs:1/64,1.6e-06) |- (axis cs:1/128,1.0e-07) node[near start,right]{$4.0$};

\legend{$k=1$, $k=2$, $k=3$, $k=4$}

\end{loglogaxis}
\end{tikzpicture} }\\ 
\subfigure{\begin{tikzpicture}[scale=0.47]
\begin{loglogaxis}[ylabel=\normalsize{$\norm{u-\du}{0}{\Th}$},xlabel=\normalsize{$h$},legend cell align=left, legend pos = south east, font=\small,line width=1.25pt]

\addplot[color=MidnightBlue,mark=oplus*,mark options=solid,dashed] coordinates {(1/8,1.5e-02) (1/16,4.7e-03) (1/32,1.7e-03) (1/64,4.8e-04) (1/128,1.0e-04) }; 
		
\addplot[color=ForestGreen,mark=triangle*,mark options=solid,dashed] coordinates {(1/8,2.9e-03) (1/16,7.2e-04) (1/32,1.8e-04) (1/64,4.5e-05) (1/128,1.1e-05) }; 

\addplot[color=carmine,mark=square*,mark options=solid,dashed] coordinates { (1/8,7.7e-05) (1/16,7.5e-06) (1/32,8.2e-07) (1/64,7.9e-08) (1/128,5.3e-09) }; 

\addplot[color=burntorange,mark=diamond*,mark options=solid,dashed] coordinates {(1/8,4.1e-06) (1/16,2.4e-07) (1/32,1.5e-08) (1/64,8.9e-10) (1/128,5.2e-11) };  


\draw[color=black,dashed] (axis cs:1/64,4.5e-05) |- (axis cs:1/128,1.1e-05) node[near start,right]{$2.0$};

\draw[color=black,dashed] (axis cs:1/64,8.9e-10) |- (axis cs:1/128,5.2e-11) node[near start,right]{$4.1$};

\legend{$k=1$, $k=2$, $k=3$, $k=4$}

\end{loglogaxis}
\end{tikzpicture} }
\subfigure{\begin{tikzpicture}[scale=0.47]
\begin{loglogaxis}[xlabel=\normalsize{$h$},legend cell align=left, legend pos = south east, font=\small,line width=1.25pt]

\addplot[color=MidnightBlue,mark=oplus*,mark options=solid,dashed] coordinates { (1/8,1.3e-02) (1/16,3.7e-03) (1/32,1.1e-03) (1/64,3.3e-04) (1/128,7.6e-05) }; 
		
\addplot[color=ForestGreen,mark=triangle*,mark options=solid,dashed] coordinates { (1/8,2.0e-03) (1/16,4.7e-04) (1/32,1.2e-04) (1/64,2.9e-05) (1/128,7.3e-06) }; 

\addplot[color=carmine,mark=square*,mark options=solid,dashed] coordinates {(1/8,5.9e-05) (1/16, 5.6e-06) (1/32,6.1e-07) (1/64,5.9e-08) (1/128,4.0e-09) }; 

\addplot[color=burntorange,mark=diamond*,mark options=solid,dashed] coordinates {(1/8,3.1e-06) (1/16,1.8e-07) (1/32,1.1e-08) (1/64,6.3e-10) (1/128,3.6e-11) }; 


\draw[color=black,dashed] (axis cs:1/64,2.9e-05) |- (axis cs:1/128,7.3e-06) node[near start,right]{$2.0$};

\draw[color=black,dashed] (axis cs:1/64,6.3e-10) |- (axis cs:1/128,3.6e-11) node[near start,right]{$4.1$};

\legend{$k=1$, $k=2$, $k=3$, $k=4$}

\end{loglogaxis}
\end{tikzpicture} }
\subfigure{\begin{tikzpicture}[scale=0.47]
\begin{loglogaxis}[xlabel=\normalsize{$h$},legend cell align=left, legend pos = south east, font=\small,line width=1.25pt]

\addplot[color=MidnightBlue,mark=oplus*,mark options=solid,dashed] coordinates { (1/8,1.3e-02) (1/16,3.8e-03) (1/32,1.1e-03) (1/64,2.9e-04) (1/128,6.4e-05) }; 
		
\addplot[color=ForestGreen,mark=triangle*,mark options=solid,dashed] coordinates { (1/8,8.0e-04) (1/16,1.1e-04) (1/32,1.5e-05) (1/64,2.2e-06) (1/128,2.8e-07) }; 

\addplot[color=carmine,mark=square*,mark options=solid,dashed] coordinates { (1/8,4.5e-05) (1/16,2.9e-06) (1/32,1.8e-07) (1/64,1.1e-08) (1/128,7.5e-10) }; 

\addplot[color=burntorange,mark=diamond*,mark options=solid,dashed] coordinates { (1/8,2.0e-06) (1/16,6.7e-08) (1/32,2.3e-09) (1/64,7.2e-11) (1/128,2.8e-12) }; 


\draw[color=black,dashed] (axis cs:1/64,2.9e-04) |- (axis cs:1/128,6.4e-05) node[near start,right]{$2.2$};

\draw[color=black,dashed] (axis cs:1/64,2.2e-06) |- (axis cs:1/128,2.8e-07) node[near start,right]{$2.9$};

\draw[color=black,dashed] (axis cs:1/64,1.1e-08) |- (axis cs:1/128,7.5e-10) node[near start,right]{$3.9$};

\draw[color=black,dashed] (axis cs:1/64,7.2e-11) |- (axis cs:1/128,2.8e-12) node[near start,right]{$4.7$};

\legend{$k=1$, $k=2$, $k=3$, $k=4$}

\end{loglogaxis}
\end{tikzpicture}}\vspace{-0.75cm}
\caption{Test 1 -- History of convergence of the UIP-DG method in the $\tnorm{\cdot}$-norm (continuous line) and  $\LL{2}$-norm (dashed line) versus the mesh-size $h$ for polynomial degrees $k\in\{1,\ldots,4\}$, respectively.}
\label{convergence1}
\end{figure}

\begin{figure}[!ht]
    \hspace{1.6cm}\tiny{\textbf{NUIP}}\hspace{3.4cm}\tiny{\textbf{IUIP}}\hspace{3.5cm}\tiny{\textbf{SUIP}}\\

    \begin{subfigure}
        \centering
        \safeincludegraphics[scale=0.131,trim = 490 20 490 110, clip=true]{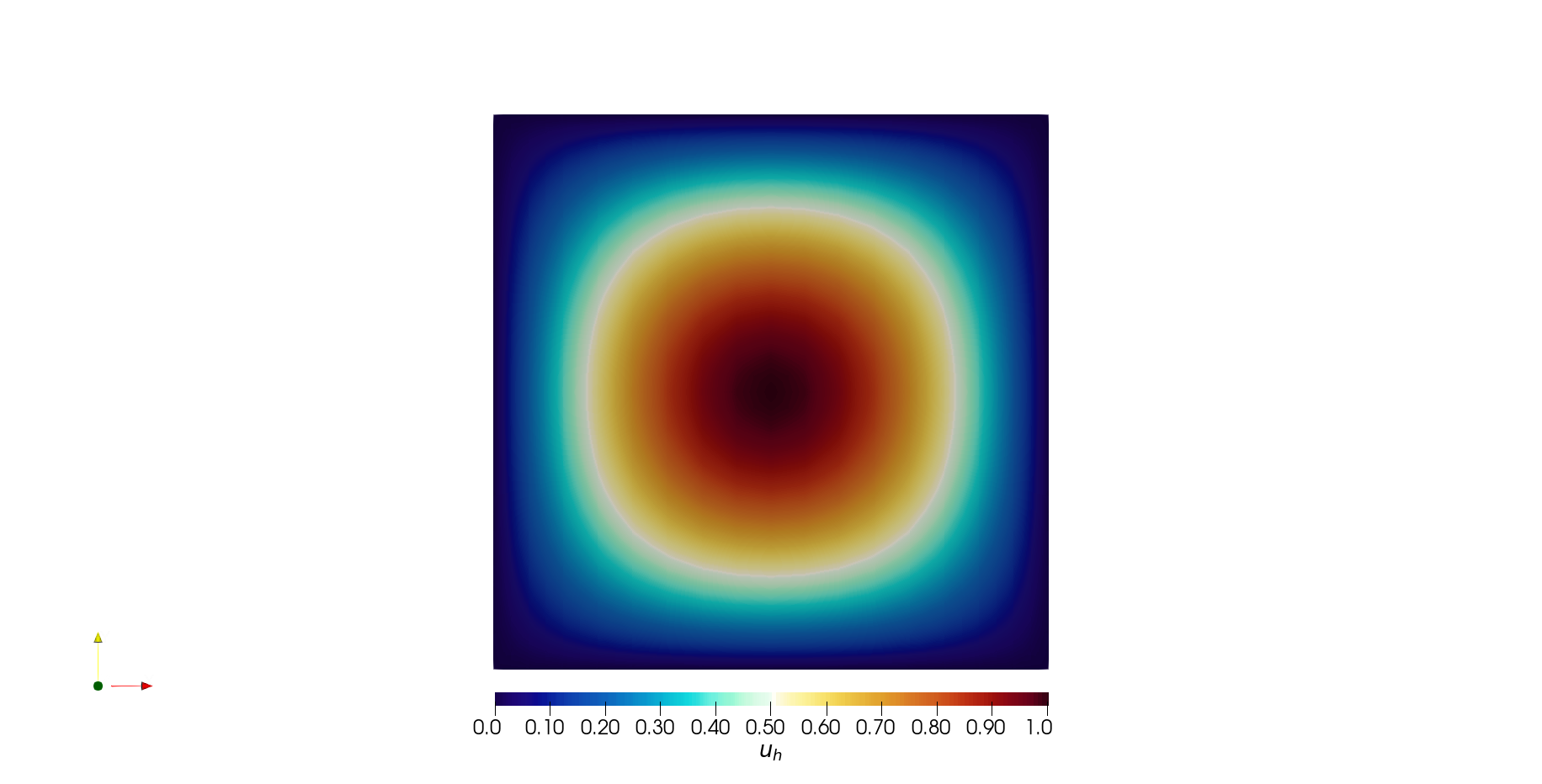}
    \end{subfigure}
    \begin{subfigure}
        \centering
        \safeincludegraphics[scale=0.131,trim = 490 20 490 110, clip=true]{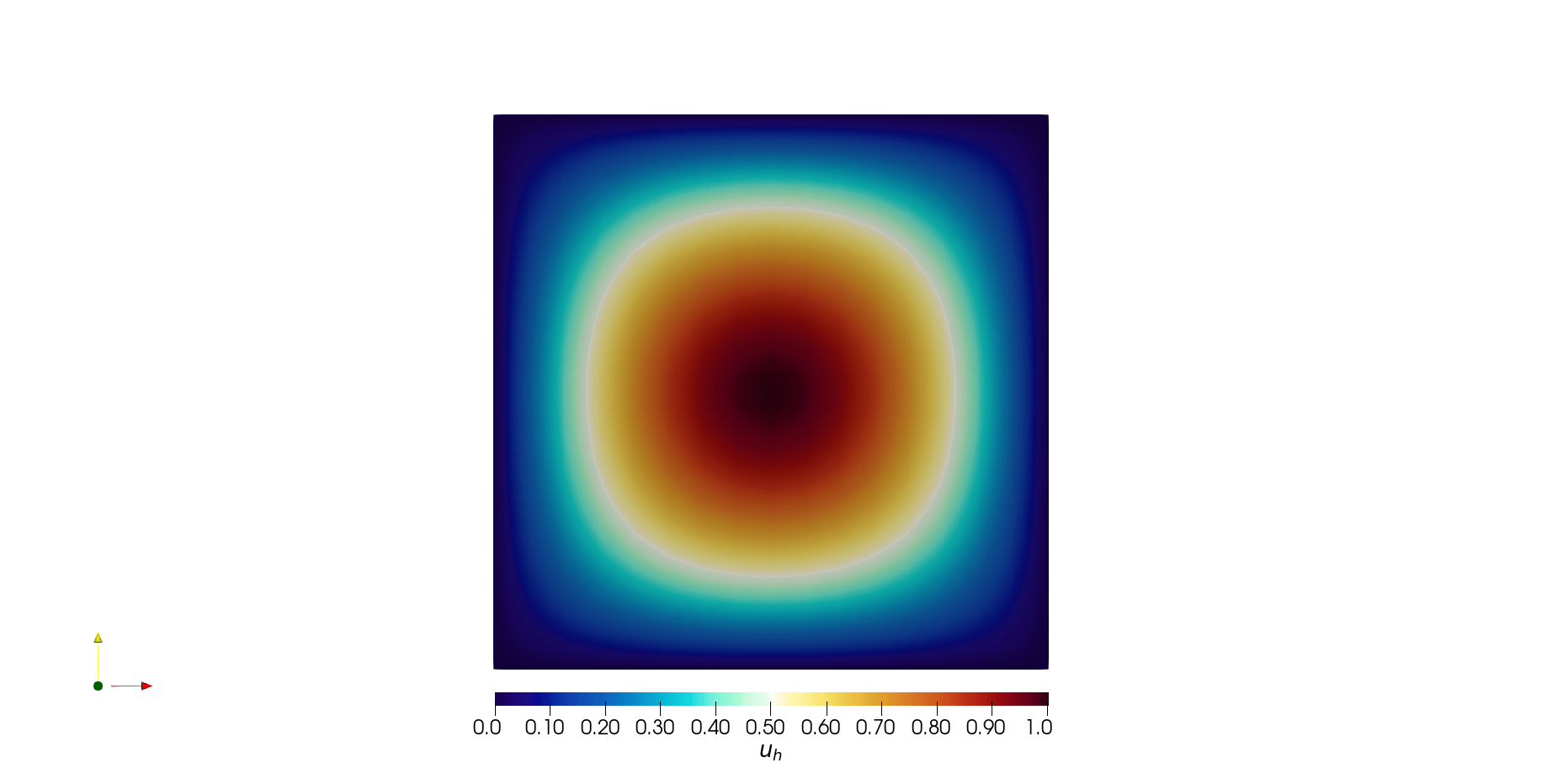}
    \end{subfigure}
    \begin{subfigure}
        \centering
        \safeincludegraphics[scale=0.131,trim = 490 20 490 110, clip=true]{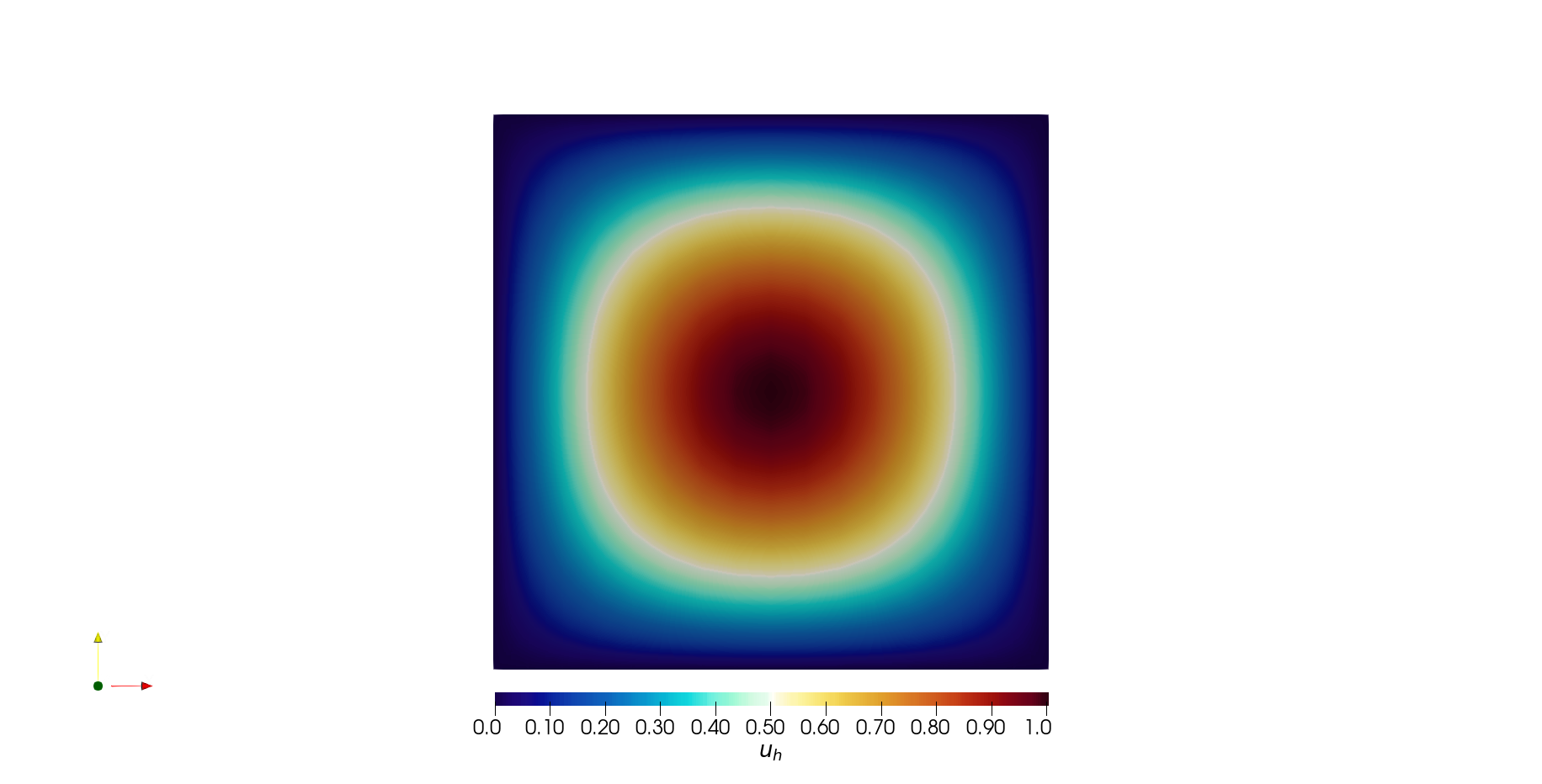}
    \end{subfigure}\\
    
    \hspace{1.6cm}\tiny{\textbf{NIP$_{\textrm{F}}$}}\hspace{3.4cm}\tiny{\textbf{IIP$_{\textrm{F}}$}}\hspace{3.5cm}\tiny{\textbf{SIP$_{\textrm{F}}$}}\\

    \begin{subfigure}
        \centering
        \safeincludegraphics[scale=0.131,trim = 490 20 490 110, clip=true]{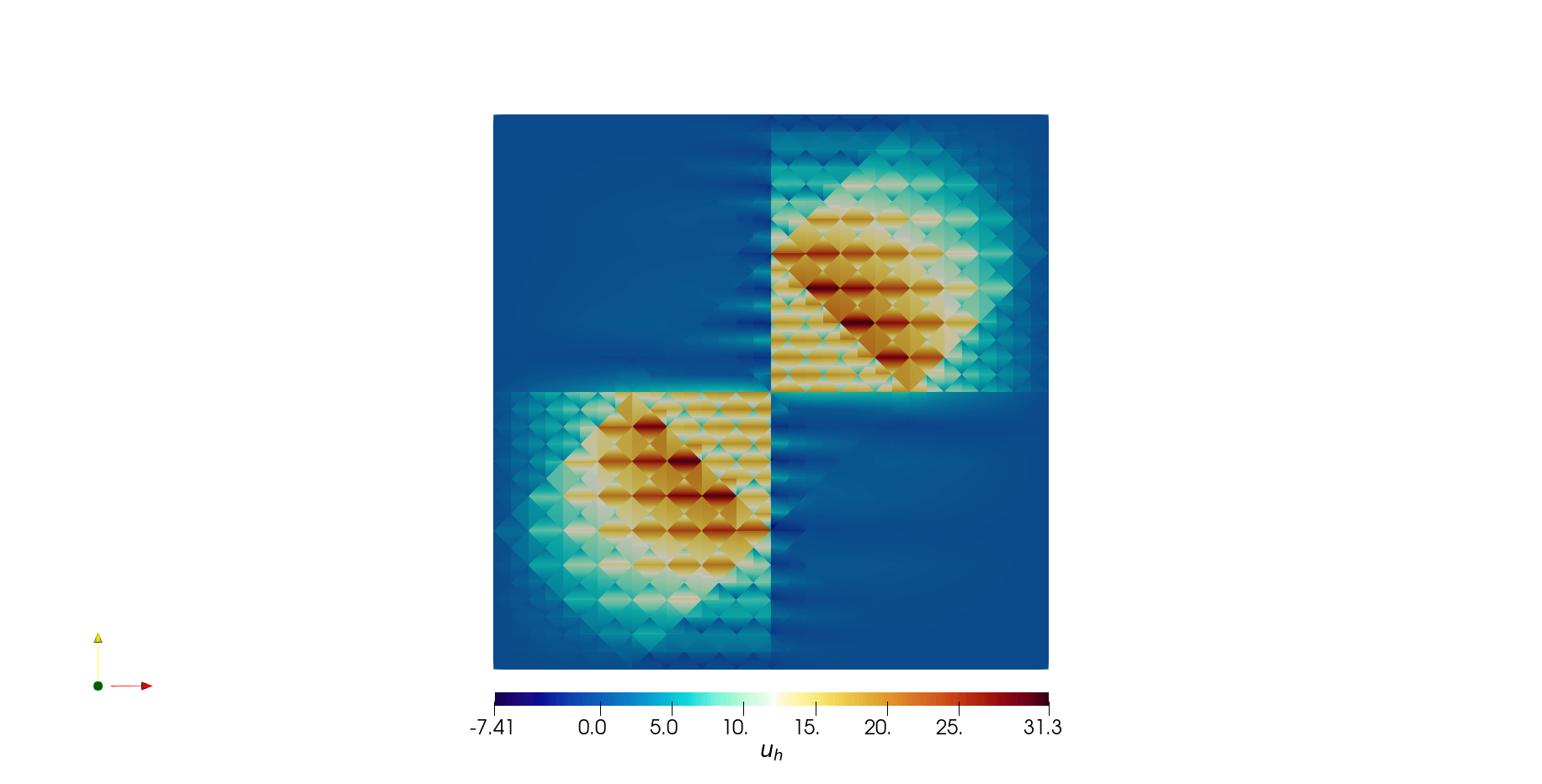}
    \end{subfigure}
    \begin{subfigure}
        \centering
        \safeincludegraphics[scale=0.131,trim = 490 20 490 110, clip=true]{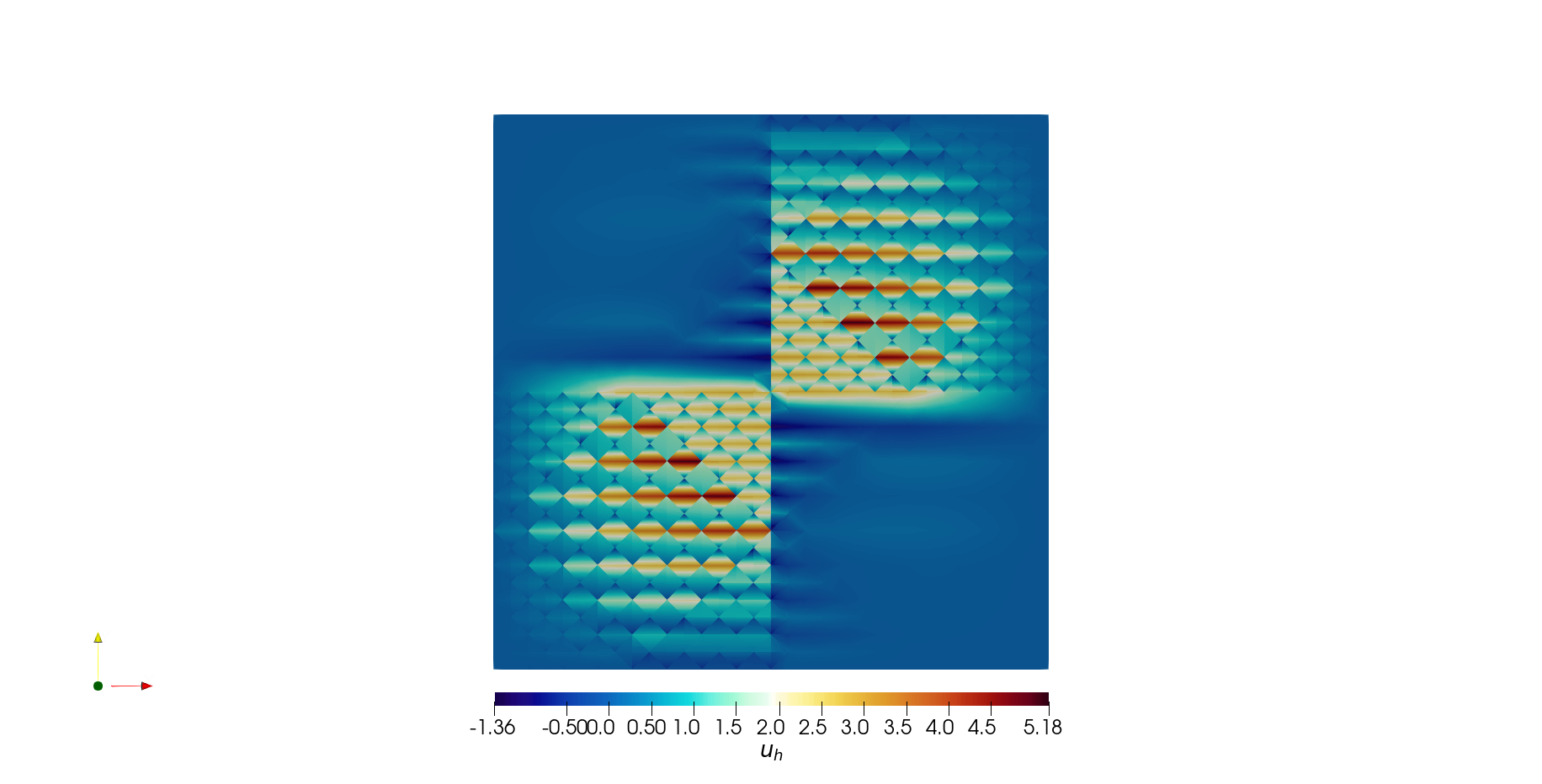}
    \end{subfigure}
    \begin{subfigure}
        \centering
        \safeincludegraphics[scale=0.131,trim = 490 20 490 110, clip=true]{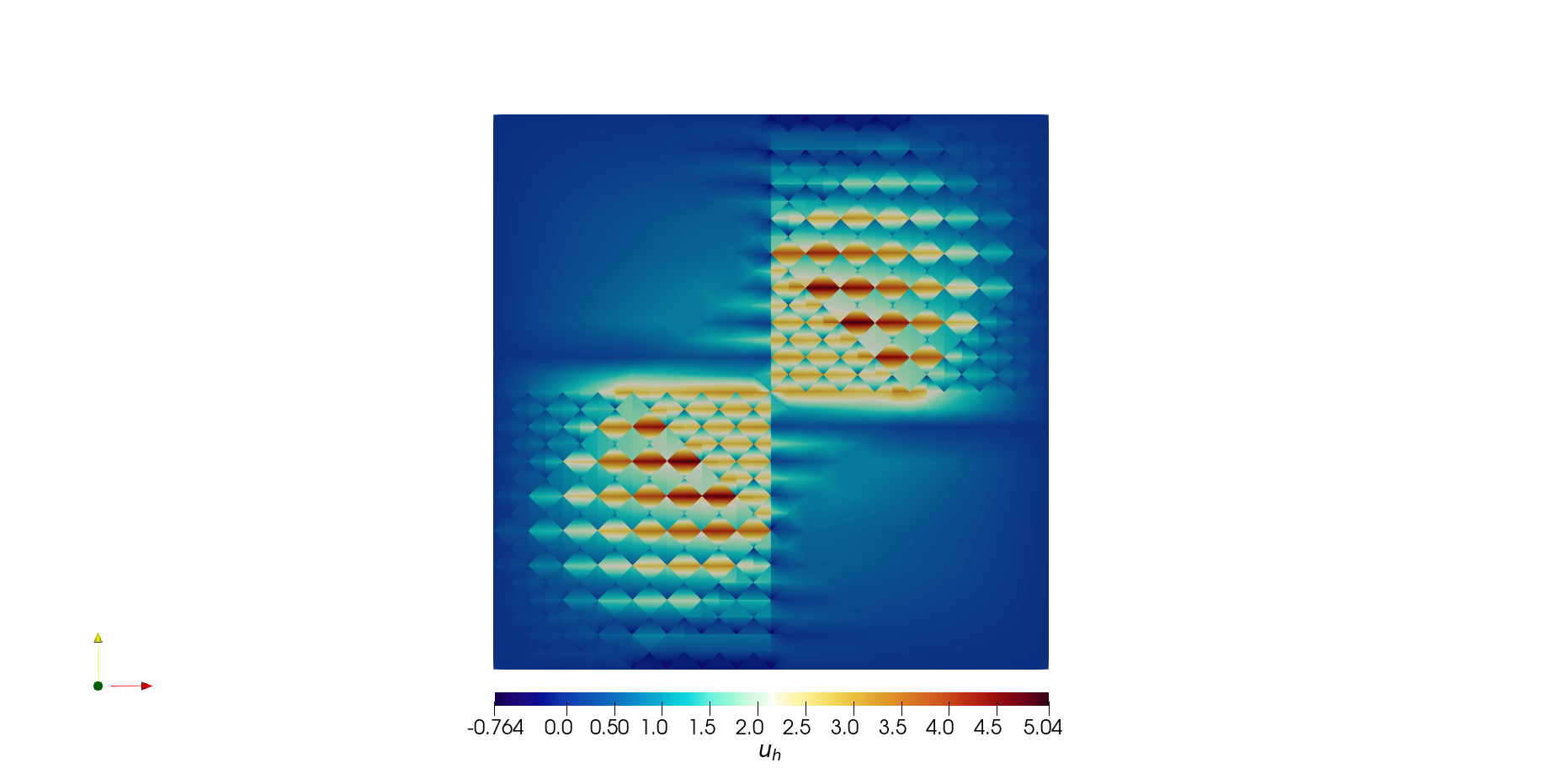}
    \end{subfigure}
\caption{Test 1 -- Representation of the discrete piecewise linear solution $\du$ obtained by the UIP-DG method (first line) and the IP$_{\textrm{F}}$-DG version (second line) for all variations of $\epsilon\in\{0,\pm1\}$. 
}
\label{Test_comparative}
\end{figure}

\subsection{Test 2 -- Heterogeneous isotropic diffusion with rough solution}
\label{S:5.2}

We now consider a test case with a rough solution, following Kellogg~\cite{Kellogg1975} and Riviere~\cite{Riviere08bookDG}. The domain $\domain=(-1,1)^2$ is partitioned into four non-overlapping subdomains $\domain_i$, $i=1,\ldots,4$, as depicted in \cref{Test-fig-descrip}(b), with isotropic diffusion tensor $\bkap|_{\domain_i}=\kappa_i\mathbf{I}$, where
\begin{equation*}
  \kappa_1=\kappa_3=5, \quad\textrm{and}\quad \kappa_2=\kappa_4=1.
\end{equation*}
The source term vanishes ($f=0$), and non-homogeneous Dirichlet boundary conditions are imposed on $\Gamma_D=\partial\domain$. The exact solution is given in polar coordinates by
\begin{equation}
\label{exact_sol_kellogg}
  u|_{\domain_i}(r,\theta)
  = r^\alpha\bigl(a_i\sin(\alpha\theta)+b_i\cos(\alpha\theta)\bigr),
\end{equation}
with coefficients listed in \cref{tab:kellogg_coeffs} and $\alpha=0.5354409456$. Since $\alpha<1$, the exact solution belongs to $H^{1+\alpha}(\domain)$ but not to $H^2(\domain)$, with a corner singularity at the origin. Consequently, the convergence rate is limited by the regularity of $u$ rather than the polynomial degree, and all variants of UIP-DG are expected to converge at the reduced rate $\bigO{h^\alpha}$ in the $\tnorm{\cdot}$-norm for all $k\geq 1$. This test therefore assesses the robustness of UIP-DG in the low-regularity regime, complementing the optimal-rate results of \cref{S:5.1}.
\begin{table}[ht]
\centering
\small
\setlength{\tabcolsep}{2pt}
\caption{Test 2 -- Coefficients of the exact solution for the Kellogg test.}
\label{tab:kellogg_coeffs}
\begin{tabular}{ccccc}
\hline
$i$ & 1 & 2 & 3 & 4 \\
\hline
$a_i$ & $0.4472$ & $-0.7454$ & $-0.9441$ & $-2.4017$ \\
$b_i$ & $1.0000$ & $2.3333$ & $0.5556$ & $-0.4815$ \\
\hline
\end{tabular}
\end{table}

\paragraph{Convergence rates}
For this test case, computations were carried out for several polynomial degrees $k=1,\ldots,4$ and for the three UIP-DG variants. For the sake of conciseness, only the results corresponding to $k=2$ are reported in \Cref{tab:KR_compare_k2}, as they provide a representative illustration of the overall convergence behavior. In particular, all polynomial degrees exhibit the same asymptotic rate, which is limited by the singular regularity of the exact solution. Owing to the singular behavior of the exact solution, characterized by the exponent $\alpha=0.53544$, the energy error converges with rate approximately $0.54$, in excellent agreement with the theoretical estimate, \ie $\tnorm{u-\du}\lesssim h^{\alpha}$. This confirms that the asymptotic regime is governed by the interface singularity, rather than by the local approximation order. While increasing the polynomial degree does not improve the asymptotic rate, it significantly reduces the error constants, highlighting the benefit of high-order approximations even in singular regimes. In the $\LL{2}$-norm, IUIP-DG and NUIP-DG exhibit slightly smaller errors and larger observed convergence rates on this benchmark. Since this trend is not directly predicted by the present analysis, it is only reported here as a numerical observation.
\begin{table}[htbp]
\centering
\small
\setlength{\tabcolsep}{2pt}
\caption{Test 2 -- Comparison of the three UIP-DG variants for the Kellogg--Rivi\`ere test
with polynomial degree $k=2$. All variants recover the singular energy rate
$\alpha \approx 0.54$.}
\label{tab:KR_compare_k2}
\begin{tabular}{cccccc}
\toprule
Method & $h$ & $\norm{u-\du}{0}{\Th}$ & ECR & $\tnorm{u-\du}$ & ECR \\
\midrule
 
& $5.1\!\times\!10^{-2}$ & $1.41\!\times\!10^{-3}$ & --   & $2.78\!\times\!10^{-1}$ & --   \\
SUIP-DG & $2.5\!\times\!10^{-2}$ & $6.36\!\times\!10^{-4}$ & 1.15 & $1.92\!\times\!10^{-1}$ & 0.54 \\
& $1.3\!\times\!10^{-2}$ & $2.94\!\times\!10^{-4}$ & 1.12 & $1.32\!\times\!10^{-1}$ & 0.54 \\
& $6.4\!\times\!10^{-3}$ & $1.37\!\times\!10^{-4}$ & 1.10 & $9.14\!\times\!10^{-2}$ & 0.54 \\
\midrule
 
& $5.1\!\times\!10^{-2}$ & $1.56\!\times\!10^{-3}$ & --   & $2.76\!\times\!10^{-1}$ & --   \\
IUIP-DG & $2.5\!\times\!10^{-2}$ & $5.85\!\times\!10^{-4}$ & 1.41 & $1.91\!\times\!10^{-1}$ & 0.54 \\
& $1.3\!\times\!10^{-2}$ & $2.31\!\times\!10^{-4}$ & 1.34 & $1.31\!\times\!10^{-1}$ & 0.54 \\
& $6.4\!\times\!10^{-3}$ & $9.59\!\times\!10^{-5}$ & 1.27 & $9.07\!\times\!10^{-2}$ & 0.54 \\
\midrule

& $5.1\!\times\!10^{-2}$ & $1.36\!\times\!10^{-3}$ & --   & $2.81\!\times\!10^{-1}$ & --   \\
NUIP-DG & $2.5\!\times\!10^{-2}$ & $4.73\!\times\!10^{-4}$ & 1.52 & $1.94\!\times\!10^{-1}$ & 0.54 \\
& $1.3\!\times\!10^{-2}$ & $1.68\!\times\!10^{-4}$ & 1.49 & $1.34\!\times\!10^{-1}$ & 0.54 \\
& $6.4\!\times\!10^{-3}$ & $6.14\!\times\!10^{-5}$ & 1.46 & $9.23\!\times\!10^{-2}$ & 0.54 \\
\bottomrule
\end{tabular}
\end{table}

\paragraph{Comparison with SWIP}

We compare the symmetric UIP-DG method (SUIP-DG) with the classical symmetric weighted interior penalty (SWIP) method \cite{ESZIMA08} on the Kellogg--Rivi\`ere benchmark. All computations are performed using piecewise affine polynomials ($k=1$). In the energy norm, both methods exhibit identical convergence rates, with an observed rate of approximately $0.54$, in agreement with the theoretical regularity index $\alpha=0.53544$. The error levels are also very close, with SUIP-DG yielding slightly smaller values, typically by about $3\%$ on all refinement levels. This confirms that the additional flux-jump stabilization present in SUIP-DG does not alter the asymptotic behavior of the method.  In contrast, a more pronounced difference is observed in the $\LL{2}$-norm. While both methods display similar convergence trends, SUIP-DG systematically produces smaller errors than SWIP, with a reduction that becomes more significant under mesh refinement. On the finest meshes, the $\LL{2}$-error is reduced by approximately $30\%$. Since this improvement concerns the error constant and is not covered by the present analysis, we report it here as a numerical observation.
\begin{table}[htbp]
\centering
\small
\setlength{\tabcolsep}{2pt}
\caption{Test 2 -- Comparison of SWIP and SUIP-DG for the Kellogg--Rivi\`ere benchmark using piecewise affine polynomials ($k=1$).}
\label{tab:KR_SWIP_SUIP}
\begin{tabular}{cccccccccc}
\toprule
& \multicolumn{4}{c}{SWIP-DG} & \multicolumn{4}{c}{SUIP-DG} \\
\cmidrule(lr){2-5} \cmidrule(lr){6-9}
$h$ 
& $\norm{u-\du}{0}{\Th}$ & ECR 
& $\tnorm{u-\du}$ & ECR
& $\norm{u-\du}{0}{\Th}$ & ECR 
& $\tnorm{u-\du}$ & ECR \\
\midrule
$5.1\!\times\!10^{-2}$ 
& $4.62\!\times\!10^{-3}$ & --- 
& $5.61\!\times\!10^{-1}$ & ---
& $3.83\!\times\!10^{-3}$ & --- 
& $5.44\!\times\!10^{-1}$ & --- \\
$2.5\!\times\!10^{-2}$ 
& $1.67\!\times\!10^{-3}$ & 1.47 
& $3.86\!\times\!10^{-1}$ & 0.54
& $1.30\!\times\!10^{-3}$ & 1.56 
& $3.75\!\times\!10^{-1}$ & 0.54 \\
$1.3\!\times\!10^{-2}$ 
& $6.41\!\times\!10^{-4}$ & 1.38 
& $2.66\!\times\!10^{-1}$ & 0.54
& $4.67\!\times\!10^{-4}$ & 1.48 
& $2.59\!\times\!10^{-1}$ & 0.54 \\
$6.4\!\times\!10^{-3}$ 
& $2.61\!\times\!10^{-4}$ & 1.29 
& $1.84\!\times\!10^{-1}$ & 0.54
& $1.78\!\times\!10^{-4}$ & 1.39 
& $1.79\!\times\!10^{-1}$ & 0.54 \\
\bottomrule
\end{tabular}
\end{table}

\begin{figure}[!ht]
  \centering
  \begin{subfigure}
    \centering
    \safeincludegraphics[scale=0.135,trim = 500 20 490 110, clip=true]{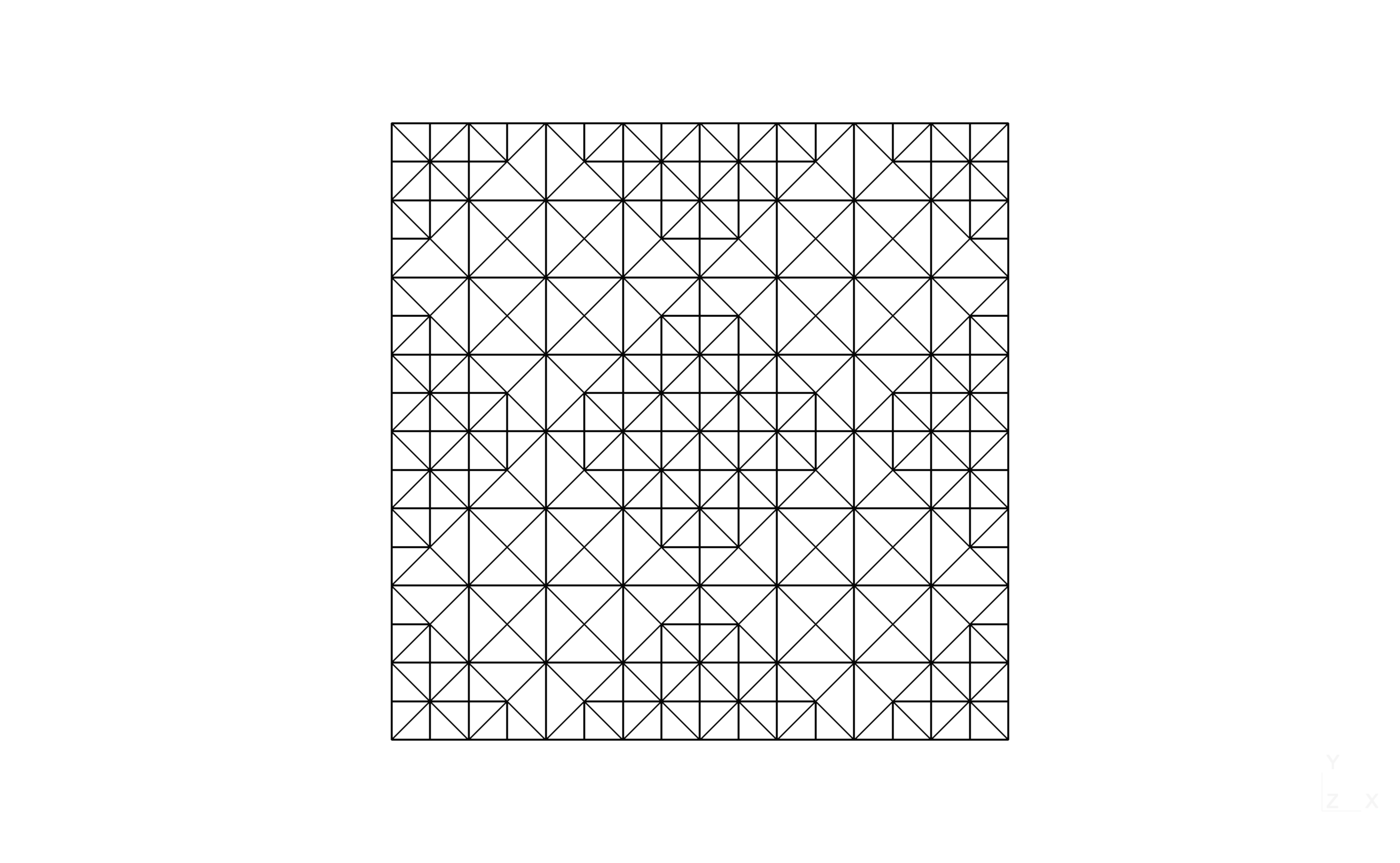}
  \end{subfigure}
  \begin{subfigure}
    \centering
    \safeincludegraphics[scale=0.235,trim = 490 20 323 105, clip=true]{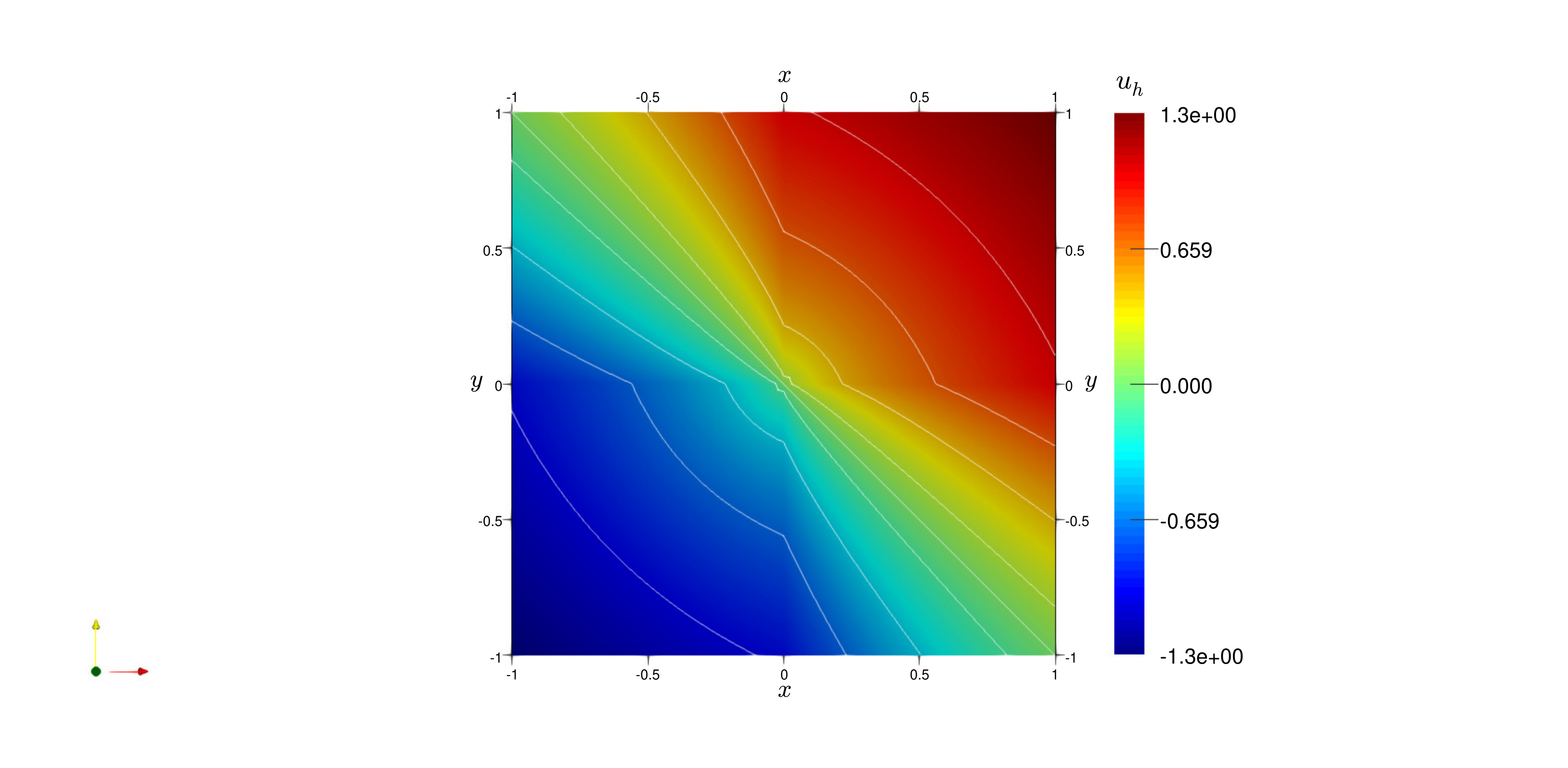}
  \end{subfigure}
  \caption{Test 2 -- (Left) A non-uniform triangular mesh. (Right) The SUIP-DG solution for the Kellogg benchmark using piecewise linear polynomials ($k=1$).}
  \label{fig:both}
\end{figure}

\section{Conclusions}
\label{sec:conclusions}

We have derived a primal discontinuous Galerkin formulation for heterogeneous and anisotropic diffusion by exact elimination of the skeletal unknown in a compact hybridized interior penalty (H-IP) method. The resulting UIP-DG formulation is entirely determined by the hybrid stabilization parameter $\tau$: the transmissibility-based weights $\omega_i^\tau$, the primal jump penalization $\varrho_0$, and the flux-jump stabilization $\varrho_1$ are obtained in closed form. In particular, all interface terms arise from the hybrid structure rather than being introduced \emph{ad hoc}, and the associated numerical flux is single-valued across interfaces, ensuring local conservation for all variants $\epsilon\in\{0,\pm1\}$.

The stability analysis establishes coercivity and boundedness in a discrete energy norm under the condition $\alpha_0 > 2\eta_0$, with constants that are uniform with respect to both the mesh size $h$ and the diffusion contrast $\kappa_{\max}/\kappa_{\min}$. A notable feature is that the coercivity threshold is independent of the discrete trace inequality constant $C_T$, reflecting the exact cancellation properties induced by the transmissibility-based scaling. Optimal energy-norm convergence rates are obtained for all variants, and an optimal $\LL{2}$-error estimate is proved for the symmetric case via an Aubin--Nitsche argument. These theoretical results are fully confirmed by numerical experiments on heterogeneous anisotropic benchmarks.

The present framework suggests two natural directions for future work. First, restricting the primal space to a conforming subspace eliminates primal jump contributions and yields a formulation based solely on flux-jump stabilization, thereby recovering classical continuous interior penalty (CIP) methods within the same hybridization framework. This establishes a direct connection between UIP-DG and CIP-type stabilizations, with penalty parameters inherited from transmissibility scaling rather than prescribed heuristically. Second, the extension to (locally degenerate) advection--diffusion--reaction problems constitutes a natural continuation of this work. Starting from the H-IP framework with Péclet-dependent stabilization, the UIP-DG reduction is expected to yield advection-corrected transmissibility weights and intrinsically asymmetric formulations reflecting the non-self-adjoint nature of the total flux. In degenerate regimes, the penalty structure adapts automatically without additional modeling assumptions, highlighting the robustness of the transmissibility-based construction. These extensions are the subject of ongoing work.

\section*{Acknowledgments}

The second author gratefully acknowledges Professor Béatrice Rivière for drawing his attention to the reference \cite{FabienNM2019} during her visit to the University of La Réunion in June 2023. He is also indebted to Professor Erik Burman for his guidance and valuable advice in establishing the proof of item (ii) in \cref{lem:extended-interface-bounds}.

\bibliographystyle{elsarticle-num}
\bibliography{references_uip}

\end{document}